\newtheorem{theorem}{Theorem}[section]
\newtheorem{lemma}{Lemma}[section]
\newtheorem{corollary}{Corollary}[section]
\newtheorem{prop}{Proposition}[section]
\newtheorem{definition}{Definition}[section]
\newtheorem{remark}{Remark}
\newenvironment{proof}{\noindent{\textsc{Proof.}}}
{$\hfill\Box$\vspace{0.1 cm}\\}
\newcommand{\R}{\mathbb R}
\newcommand{\N}{{\mathbb N}}
\newcommand{\norm}[1]{\left\| #1 \right\|}
\newcommand{\abs}[1]{\left\vert #1 \right\vert}
\newcommand{\wc}{\rightharpoonup}
\newcommand{\pt}{\partial_t\,}
\newcommand{\LL}[1]{\mathbf{L^#1}}
\newcommand{\HH}[1]{\mathbf{H^#1}}
\newcommand{\dd}[1]{\mathinner{\mathrm{d}{#1}}}
\newcommand{\modulo}[1]{{\left|#1\right|}}
\newcommand{\norma}[1]{{\left\|#1\right\|}}
\newcommand{\CC}[1]{{\mathbf{C^#1}}}
\begin{document}

\date{}

\title{Optimal control problems for a parabolic system modeling glioma
  therapy}

\author{Mauro Garavello\thanks{E-mail:
    \texttt{mauro.garavello@unimib.it}.
  }\\
  Department of Mathematics and its Applications,\\
  University of Milano Bicocca,\\
  via R. Cozzi 55,\\
  20125 Milano (Italy). \\
  \and Elena Rossi\thanks{E-mail: \texttt{elena.rossi13@unimore.it}.}\\
  Department of Sciences and Methods for Engineering,\\
  University of Modena and Reggio Emilia,\\
  Via Amendola 2, Pad.~Morselli \\
  42122 Reggio Emilia (Italy)}

\maketitle

\begin{abstract}
  In this paper we consider optimal control problems for a
  parabolic system modeling a therapy, based on oncolytic viruses, for
  the glioma brain cancer.  Using several techniques typical of
  functional analysis, we prove the global in time well posedness of
  the control model, the existence of optimal controls for specific
  objective functionals, which are natural for cancer therapies, and we derive necessary conditions for
  optimality.
\end{abstract}

\textit{Key Words:} Control problems; parabolic partial differential
equations; necessary and sufficient conditions for optimality;
glioma cancer
therapy; virotherapy; existence of optimal controls.

\textit{AMS Subject Classifications:} 35K51; 25K55; 35Q93; 49J20;
49N90.

\section{Introduction}\label{se:introduction}
In this paper we consider optimal control problems for a $3\times3$
system of parabolic partial differential equations modeling a therapy
in the case of a brain cancer, the glioma one, based on the infusion
of oncolytic viruses. They are genetically modified viruses able to
infect cancer cells and to replicate inside them, but they are not
harmful for healthy cells. With this mechanism, they eventually kill
mainly cancer cells. Moreover, when an infected cell dies, it releases
many copies of the viruses, which then spread to infect neighboring
tumor cells.  The main obstacle in the use of oncolytic viruses
consists in the fact that the innate immune system
recognizes the cells infected by the virus and destroys them before
the virus multiply. In this paper, as in~\cite{FriedmanTao}
and~\cite{WKBN}, we are neglecting this aspect.

There is a huge mathematical literature for cancer modeling based on
differential equations; see~\cite{MR2493727, MR3059566,  MR1952568,
  MR4237849} and the reference therein for a detailed description.
This is essentially due to the large variety of diseases commonly
named under the word \textsl{cancer}.  Each tumor has some specific
peculiarities and dynamics; hence it requires an ad-hoc model for a
precise mathematical description.  Also therapies vary
accordingly. For example, they include chemotherapy, radiotherapy, stem
cell transplant, surgery and can be dosed also combined together. This
justifies the large number of mathematical papers dealing with
this subject.
In particular, considering Glioma type cancer,
we can distinguish the various models through different categories: based on ODEs~\cite{MR3562913, MR3721853, MR3767838} or on PDEs~\cite{CherfilsGattiMiranvilleGuillevin,  ContiGattiMiranville, FriedmanTao, FriedmanPDEmodel, WKBN},
focusing on controlling aspects~\cite{ContiGattiMiranville, MR3562913, MR3721853, MR3767838}, on therapy calibration~\cite{Biesecker, FriedmanTao, FriedmanPDEmodel, WKBN} or on asymptotic behaviour of
solutions~\cite{CherfilsGattiMiranvilleGuillevin, FriedmanTao}.

The main results {of this paper} are the well posedness of the parabolic $3 \times 3$
control system, the existence of optimal controls, and first order
necessary conditions for optimality. They are
obtained through several techniques typical of functional analysis.
In particular, different from other papers in the literature,
see for instance~\cite{CherfilsGattiMiranvilleGuillevin, ContiGattiMiranville}, we use the Banach fixed point theorem to prove local
in time existence and uniqueness of solution for the parabolic system.
Moreover, a combination of a-priori estimates and maximum principles
for scalar equations permits to extend the solution to arbitrary time
intervals obtaining global in time well posedness.  Gronwall
inequality is used to prove the Lipschitz continuity, in the
$\LL2$ topology, of the solution with respect to the controls.
Existence of optimal controls is deduced using the direct
method in the calculus of variation; see for example~\cite{MR2361288}.
Here continuous embedding theorems and Ascoli-Arzelà Theorem play an
important role in the weak and strong convergence of quasi-optimal
solutions.
Finally, necessary conditions are obtained through the
derivative of the input-output map and the adjoint
system. 

The main novelty of the paper consists in the study of
a nonlinear system of parabolic partial differential equations with an
open loop control function; see~\cite{zbMATH05150528, zbMATH01394147,
  zbMATH00733958, zbMATH05703572} and the references therein for
control problems for partial differential equations.  Here we derive
necessary and sufficient conditions for optimal controls.

The paper is organized as follows. \Cref{se:def} introduces the
mathematical model and the definition of solution.
In~\Cref{sec:local-global-existence} we prove the existence and
uniqueness of solution both local and global in time.
In~\Cref{sec:dep-sol-control} we study the Lipschitz continuous
dependence of the solution with respect to the control function, while
in~\Cref{sec:optimal-control} we deduce the existence of optimal
controls for some objective functionals, natural for cancer therapies.
\Cref{sec:necessary-conditions} deals with necessary conditions for
optimality.  Finally~\Cref{sec:base} contains classical results about
well posedness of scalar parabolic equations, used in~\Cref{sec:local-global-existence}. It is mainly intended to ease the
readability of the paper.

%
%
%
%
\section{Basic definitions and notations}\label{se:def}

In this paper we consider control problems for the system of partial
differential equations
\begin{equation}
  \label{eq:PDE-model-3x3}
  \left\{
    \begin{array}{l}
      \pt \rho_1 = \Delta \rho_1 + \left(\alpha - \delta_1\right) \rho_1
      - \beta \rho_1 v
      \\
      \pt \rho_2 = \Delta \rho_2 + \beta \rho_1 v - \delta_2 \rho_2
      \\
      \pt v = \Delta v + b \delta_2 \rho_2 - B \rho_1 v - \delta_v v + u,
    \end{array}
  \right.
\end{equation}
where $t \ge 0$ is the time, $x \in \Omega$ is the spatial variable,
$\Omega \subseteq \R^N$ is an open, bounded, and connected set with
smooth boundary denoted by $\partial \Omega$, and $N \in \N$,
$N \ge 2$ (typically $N=2$ or $N=3$ in applications).  Moreover
$\rho_1, \rho_2:(0,+\infty) \times \Omega \to \R$ describe the density
respectively of uninfected cancer cells and of infected cancer cells,
while $v:(0,+\infty) \times \Omega \to \R$ represents the density of
the injected virus.  The map {$u=u(t,x)$} is the control
function modeling the velocity of the virus infusion.  Finally
$\alpha$, $\beta$, $\delta_1$, $\delta_2$, $\delta_v$, $b$, $B$ are
fixed positive constants.
In the paper we consider controls depending also on
the spatial variable, although, as detailed in~\cite{DOI},
in real situations the viral therapy is administered intravenously,
so that a control depending only on time should be more
realistic.

We augment the system~\eqref{eq:PDE-model-3x3} with the initial
conditions
\begin{equation}
  \label{eq:initial-condition}
  \left\{
    \begin{array}{l}
      \rho_1(0, x) = \rho_{1,o}(x)
      \\
      \rho_2(0, x) = \rho_{2,o}(x)
      \\
      v(0, x) = v_o(x),
    \end{array}
  \right.
\end{equation}
where $\rho_{1,o}, \rho_{2,o}, v_{o} \in \LL2\left(\Omega\right)$, and
with homogeneous Neumann boundary conditions
\begin{equation}
  \label{eq:boundary-condition}
  \left\{
    \begin{array}{l}
      \partial_\nu \rho_1(t, \xi) = 0
      \\
      \partial_\nu \rho_2(t, \xi) = 0
      \\
      \partial_\nu v(t, \xi) = 0
    \end{array}
  \right.
\end{equation}
for $\xi \in \partial \Omega$, where the symbol $\partial_\nu$ denotes
the inner normal derivative.

Throughout the paper, we deal with the following concept of weak
solution for
system~\eqref{eq:PDE-model-3x3}-\eqref{eq:initial-condition}-\eqref{eq:boundary-condition}.
\begin{definition}
  \label{def:sol}
  Given $T > 0$, the triple $(\rho_1, \rho_2, v)$ is a \emph{solution}
  to the initial-boundary value
  problem~\eqref{eq:PDE-model-3x3}-\eqref{eq:initial-condition}-\eqref{eq:boundary-condition}
  on the time interval $[0,T]$ if
  \begin{enumerate}
  \item $\rho_1, \rho_2, v \in \LL\infty((0,T) \times \Omega; \R)$;
  \item $\rho_1, \rho_2, v \in \LL2([0,T]; \HH1(\Omega))$;
  \item
    $\dot \rho_1, \dot \rho_2, \dot v \in \LL2([0,T];
    \HH1(\Omega)^*)$;
  \item $\rho_1(0,x) = \rho_{1,o}(x)$, $\rho_2(0,x) = \rho_{2,o}(x)$,
    $v(0,x) = v_o(x)$ in $\LL2(\Omega)$;
  \item for a.e.~$t \in [0,T]$ and for any
    $w_1, w_2, w_3 \in \HH1(\Omega)$
    \begin{align*}
      \langle \dot \rho_1(t), w_1\rangle =
      & - \!\int_\Omega \nabla \rho_1(t,x) \cdot \nabla w_1(x) \dd{x}
        + (\alpha - \delta_1)\! \int_\Omega \rho_1(t,x) \,  w_1(x) \dd{x}
      \\
      & - \beta \int_\Omega \rho_1(t,x) \, v(t,x) \, w_1(x) \dd{x},
      \\
      \langle \dot \rho_2(t), w_2\rangle =
      & - \int_\Omega \nabla \rho_2(t,x) \cdot \nabla w_2(x) \dd{x}
        - \delta_2 \int_\Omega \rho_2(t,x) \, w_2(x) \dd{x}
      \\
      & + \beta \int_\Omega \rho_1(t,x) \, v(t,x) \, w_2(x) \dd{x},
      \\
      \langle \dot v(t), w_3\rangle =
      & - \int_\Omega \nabla v(t,x) \cdot \nabla w_3(x) \dd{x}
        + b \, \delta_2 \int_\Omega \rho_2(t,x) \, w_3(x) \dd{x}
      \\
      & - B \int_\Omega \rho_1(t,x) \, v(t,x) \, w_3(x) \dd{x}
        - \delta_v \int_\Omega v(t,x) \, w_3(x) \dd{x}
      \\
      & {+ \int_\Omega u(t,x) \, w_3(x) \dd{x}}.
    \end{align*}
  \end{enumerate}
\end{definition}

\begin{remark}
  \label{rmk:sol-C0}
  Note that assumptions 2 and 3 of~\Cref{def:sol} imply that the
  functions $\rho_1$, $\rho_2$, and $v$ belong to the space
  $\CC0\left([0, T]; \LL2\left(\Omega\right)\right)$;
  see~\cite[Theorem~7.104]{Salsa}.  This justifies the condition 4
  of~\Cref{def:sol}.
\end{remark}

\subsection{Model justification}

In~\cite{MR3562913, MR3721853, MR3767838} the authors proposed a
mathematical model for the therapy of glioma based on oncolytic
viruses infusion.  Oncolytic viruses are genetically altered viruses
able to infect cancer cells but not normal ones. They reproduce in
cancer cells and eventually kill them, and when an infected cell dies,
many new viruses are released and spread out. The model
in~\cite{MR3562913, MR3721853, MR3767838} is given by
the following system of nonlinear ordinary differential equations
\begin{equation}
  \label{eq:complete-model-5x5}
  \left\{
    \begin{array}{l}
      \dot x = \alpha x - \beta x v - \delta_x x
      \\
      \dot y = \beta x v - \xi y \frac{T}{K + T} - \delta_y y
      \\
      \dot M = A + s y M - \delta_M M
      \\
      \dot T = \frac{\eta}{1 + u_2} M - \omega y \frac{T}{K + T} - \delta_T T
      \\
      \dot v = b \delta_y y - \rho x v - \delta_v v + u_1,
    \end{array}
  \right.
\end{equation}
where the unknowns $x$, $y$, $M$, $T$, $v$ represent respectively the
density of uninfected cancer stem cells, the density of infected
cancer cells, the density of the macrophages, the concentration of
TNF-$\alpha$ inhibitors, and the density of the virus. The control
functions $u_1 = u_1(t)$ and $u_2 = u_2(t) $ denote respectively the
amount of virus and of TNF-$\alpha$ inhibitor that is injected at time
$t$.  The descriptions and realistic values of the various parameters
appearing in system~\eqref{eq:complete-model-5x5} can be found
in~\cite[Table~2]{MR3767838}.  One can also note in~\cite{MR3767838}
that the dynamics of the unknowns $M$ and $T$ is almost static around
the values $M \sim 0.1 \frac{g}{cm^3}$ and
$T \sim 5\times
10^{-6}\frac{g}{cm^3}$. Hence~\eqref{eq:complete-model-5x5} can be
approximated by the $3\times3$ system
\begin{equation}
  \label{eq:reduced-model-3x3}
  \left\{
    \begin{array}{l}
      \dot x = \alpha x - \beta x v - \delta_x x
      \\
      \dot y = \beta x v - \tilde \xi y - \delta_y y
      \\
      \dot v = b \delta_y y - \rho x v - \delta_v v + u_1.
    \end{array}
  \right.
\end{equation}
Model~\eqref{eq:PDE-model-3x3} is the natural generalization
of~\eqref{eq:reduced-model-3x3} once we allow the densities of cancer
cells and of the virus to depend also on the spatial coordinate.

\section{Local and global existence}
\label{sec:local-global-existence}
In this section we prove both the local and global well posedness for
system~\eqref{eq:PDE-model-3x3}. The local in time result is proved
using a fixed point technique, while a-priori estimates permit to
extend the solution to arbitrary time intervals.
In the following we use the notation $\Omega_T = (0, T) \times \Omega$.

\begin{theorem}
  \label{thm:local_wellposedness}
  Assume $\alpha$, $\beta$, $\delta_1$, $\delta_2$, $\delta_v$, $b$,
  $B$, and $U$ fixed positive constants.  Let $\Omega \subseteq \R^N$
  be an open, connected, and bounded domain, with smooth boundary
  $\partial \Omega$. Fix
  $\rho_{1,o}, \rho_{2,o}, v_{o} \in \LL\infty\left(\Omega; \R_+\right)$ and
  {$u \in \LL\infty(\R \times \Omega; \R_+)$, with
    $\norm{u}_{\LL\infty(\R \times \Omega)} \le U$}. There exist
  $T > 0$ and a unique solution $\left(\rho_1, \rho_2, v\right)$
  to~\eqref{eq:PDE-model-3x3}-\eqref{eq:initial-condition}-\eqref{eq:boundary-condition}
  on the time interval $[0, T]$, in the sense of~\Cref{def:sol}.
  Moreover, for a.e. $t \in [0, T]$ and $x \in \Omega$,
  \begin{equation}
    \label{eq:positivity-solution}
    \rho_1(t, x) \ge 0, \qquad
    \rho_2(t, x) \ge 0, \qquad
    v(t, x) \ge 0.
  \end{equation}
  Finally, if moreover
  $\rho_{1,o}, \rho_{2,o}, v_{o} \in \HH1 \left(\Omega; \R_+\right)$, then
  \begin{equation*}
    \rho_1, \rho_2, v \in \LL2\left(0, T; \HH2\left(\Omega\right)\right).
  \end{equation*}
\end{theorem}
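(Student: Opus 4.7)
The plan is to construct the solution via a Banach fixed point argument, linearizing the nonlinear couplings. Fix $M_0$ bounding the $\LL\infty$ norms of $\rho_{1,o}, \rho_{2,o}, v_o$ and of $u$. For $R > M_0$ and $T > 0$ to be chosen, introduce the complete metric space
\[
  X_{T,R} = \left\{ (\tilde\rho_1, \tilde\rho_2, \tilde v) \in \CC0\left([0,T]; \LL2(\Omega)\right)^3 : 0 \le \tilde\rho_1, \tilde\rho_2, \tilde v \le R \text{ a.e.~on } \Omega_T \right\}
\]
equipped with the supremum-in-time $\LL2$ metric. Define $\Phi : X_{T,R} \to X_{T,R}$ sending $(\tilde\rho_1, \tilde\rho_2, \tilde v)$ to the triple $(\rho_1, \rho_2, v)$ obtained by solving the three \emph{decoupled} scalar linear parabolic Neumann problems
\begin{align*}
  \pt \rho_1 - \Delta \rho_1 &= \left(\alpha - \delta_1 - \beta \tilde v\right) \rho_1,
  \\
  \pt \rho_2 - \Delta \rho_2 &= -\delta_2 \rho_2 + \beta \tilde\rho_1 \tilde v,
  \\
  \pt v - \Delta v &= -\left(\delta_v + B \tilde\rho_1\right) v + b \delta_2 \tilde\rho_2 + u,
\end{align*}
coupled with the initial data~\eqref{eq:initial-condition}. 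Since all coefficients and source terms lie in $\LL\infty(\Omega_T)$, existence and uniqueness in the sense of~\Cref{def:sol}, a quantitative $\LL\infty$ bound, and non-negativity via the weak maximum principle all follow from the scalar parabolic results collected in~\Cref{sec:base}. With $R$ suitably larger than $M_0$ and $T$ suitably small, the standard $\LL\infty$ estimate makes the ball invariant, $\Phi(X_{T,R}) \subseteq X_{T,R}$.

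The core step is showing that $\Phi$ is a strict contraction for $T$ small. Given two inputs $\tilde w^{(j)} = (\tilde\rho_1^{(j)}, \tilde\rho_2^{(j)}, \tilde v^{(j)})$, $j=1,2$, with images $w^{(j)} = \Phi(\tilde w^{(j)})$, I subtract the corresponding linear equations, test each against the relevant component of $w^{(1)} - w^{(2)}$ and integrate over $\Omega$. The Laplacian contributes a non-positive term that is discarded, while the remaining bilinear pieces split into expressions such as $\beta (\tilde v^{(1)} - \tilde v^{(2)}) \rho_1^{(1)}$; each of these is estimated in $\LL2(\Omega)$ by Cauchy--Schwarz after pulling out the $\LL\infty$ factor, which is controlled by $R$. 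Summing the three resulting energy inequalities and applying Gronwall's lemma yields
\[
  \sup_{t \in [0,T]} \norm{w^{(1)}(t) - w^{(2)}(t)}_{\LL2(\Omega)^3}^2
  \le C(R)\, T\, \mathrm e^{C(R) T} \sup_{s \in [0,T]} \norm{\tilde w^{(1)}(s) - \tilde w^{(2)}(s)}_{\LL2(\Omega)^3}^2,
\]
so shrinking $T$ drives the prefactor strictly below $1$. Banach's theorem then produces a unique fixed point, which is the unique weak solution of~\eqref{eq:PDE-model-3x3}--\eqref{eq:boundary-condition} on $[0,T]$, and the non-negativity proved above is exactly~\eqref{eq:positivity-solution}.

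The main obstacle is precisely this contraction step: the quadratic couplings $\rho_1 v$ and $\tilde\rho_1 \tilde v$ must be estimated by $\LL2$-differences alone, which only works because one factor in each product is uniformly bounded in $\LL\infty$ by $R$, so the parameters $R$ and $T$ have to be tuned simultaneously to guarantee both invariance and contractivity. Finally, the improved regularity statement is a regularity upgrade: under the extra assumption $\rho_{1,o}, \rho_{2,o}, v_o \in \HH1(\Omega; \R_+)$, the $\LL\infty$ bounds already established make all source terms on the right-hand side of~\eqref{eq:PDE-model-3x3} belong to $\LL2(\Omega_T)$, and the standard $\HH2$-regularity theory for scalar parabolic Neumann problems with $\HH1$-initial data, again recalled in~\Cref{sec:base}, promotes $\rho_1, \rho_2, v$ to $\LL2(0, T; \HH2(\Omega))$.
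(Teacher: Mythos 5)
Your proposal is correct and follows essentially the same route as the paper: the same decoupled linearization of the three equations, a Banach fixed point in $\CC0([0,T];\LL2(\Omega))$ under an $\LL\infty$ constraint, invariance and non-negativity via the weak maximum principle for the scalar problems of~\Cref{sec:base}, a Gronwall-type $\LL2$ contraction estimate for small $T$, and the parabolic $\HH2$ regularity upgrade for $\HH1$ data. The only (harmless) difference is that you impose the constraint $0\le\cdot\le R$ directly in the fixed-point space, whereas the paper works with $\LL\infty$ balls centered at the initial data.
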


\begin{proof}
  Define
  \begin{equation}
    \label{eq:M}
    M = 2 \max \left\{\norm{\rho_{1,o}}_{\LL\infty\left(\Omega\right)},
      \norm{\rho_{2,o}}_{\LL\infty\left(\Omega\right)},
      \norm{v_{o}}_{\LL\infty\left(\Omega\right)}\right\} + 1.
  \end{equation}
  Fix $T > 0$ such that
  \begin{equation}
    \label{eq:T-small}
    \begin{aligned}
      T < \ & \min\left\{\frac{1}{\alpha + \delta_1+ \frac{3M-1}{2} \,
          \beta} \ln \left(\frac{M+1}{M - 1}\right),\,
        \frac{1}{\delta_2}\ln\left(\frac{M}{M-1}\right), \right.
      \\
      & \frac{1}{\delta_2}\ln\left(1 + \frac{2 \, \delta_2}{\beta \,
          (3M-1)^2}\right),\, \frac{2}{(3M-1)B + 2 \,
        \delta_v}\ln\left(\frac{M}{M-1}\right),\,
      \\
      & \frac{2}{(3M-1)B + 2 \, \delta_v}\ln\left(1 +
        \frac{\delta_v}{(3M-1)b \, \delta_2 + 2 \, U}\right),
      \\
      & \frac{2}{2 \,\alpha + 2 \,\delta_1+ (3M-1) \, \beta} \ln
      \left(\frac32\right), \, \frac{1} {9 \beta^2 \left(3M -
          1\right)^2}, \, \frac{1}{\delta_2}\ln\left(\frac32\right)
      \\
      & \left.  \frac{2}{2 \, \delta_v + (3 \, M-1) B }
        \ln\left(\frac32\right), \, \frac{1}{36 \, b^2 \, \delta_2^2},
        \, \frac{1} {9 B^2 \left(3M - 1\right)^2} \right\}
    \end{aligned}
  \end{equation}
  Consider the Banach spaces
  \begin{align*}
    X_1
    & = \left\{\zeta \in \CC0\left([0, T]; \LL2\left(\Omega\right)\right):
      \sup_{t \in [0, T]} \norm{\zeta(t) - \rho_{1, o}}_{\LL\infty\left(\Omega\right)}
      \le M
      \right\}
    \\
    X_2
    & = \left\{\zeta \in \CC0\left([0, T]; \LL2\left(\Omega\right)\right):
      \sup_{t \in [0, T]} \norm{\zeta(t) - \rho_{2, o}}_{\LL\infty\left(\Omega\right)}
      \le M
      \right\}
    \\
    X_3
    & = \left\{\zeta \in \CC0\left([0, T]; \LL2\left(\Omega\right)\right):
      \sup_{t \in [0, T]} \norm{\zeta(t) - v_{o}}_{\LL\infty\left(\Omega\right)}
      \le M
      \right\}
  \end{align*}
  endowed with the norm
  \begin{equation*}
    \norm{\zeta}_{X_i} = \sup_{t \in [0, T]}
    \norm{\zeta(t)}_{\LL2\left(\Omega\right)}
  \end{equation*}
  for $i \in \left\{1, 2, 3\right\}$.

  Define $X = X_1 \times X_2 \times X_3$ with the norm
  $\norm{\left(\zeta_1, \zeta_2, \zeta_3\right)}_X = \sum_{i=1}^3
  \norm{\zeta_i}_{X_i}$ and the map
  \begin{equation*}
    \mathcal T: X \longrightarrow X
  \end{equation*}
  such that, for every $\left(r_1, r_2, w\right) \in X$,
  $\mathcal T \left(r_1, r_2, w\right) = \left(\rho_1, \rho_2,
    v\right)$ is the unique weak solution to the decoupled system
  \begin{equation*}
    \left\{
      \begin{array}{l}
        \pt \rho_1 = \Delta \rho_1 + \left(\alpha - \delta_1 - \beta \, w \right) \rho_1
        \\
        \pt \rho_2 = \Delta \rho_2 - \delta_2 \rho_2 + \beta r_1 w
        \\
        \pt v = \Delta v + b \delta_2 r_2  + u -(B r_1 + \delta_v)  v
      \end{array}
    \right.
  \end{equation*}
  with initial data $\left(\rho_{1,o}, \, \rho_{2,o}, \, v_o\right)$
  and Neumann homogeneous boundary conditions.  Such solution exists
  by~\Cref{thm:parBase}, since the functions
  $r_1 w, r_2, u \in \LL2\left(0, T;
    \HH1\left(\Omega\right)^*\right)$.
  Indeed, to comply with~\Cref{thm:parBase}:
  \begin{itemize}
  \item
    {$u\in\LL\infty((0,T)\times \Omega ) \subseteq \LL2((0,T)
      \times \Omega)$}
  \item $r_2 \in \LL2(\Omega_T)$, being in $X_2$
  \item $r_1, \, w \in \LL2(\Omega_T)$, being respectively in $X_1$
    and $X_3$.
  \item $w, r_1$ are needed to be in $\LL\infty(\Omega_T)$ and this is
    true since, by~\eqref{eq:M}, for instance
    \begin{align*}
      \modulo{r_1(t,x)}
      & \leq \modulo{r_1(t,x) - \rho_{1,o}(x)} + \modulo{\rho_{1,o}(x)}
        \leq M + \norma{\rho_{1,o}}_{\LL\infty(\Omega)}
      \\
      & \leq M + \frac{M-1}{2}.
    \end{align*}
  \end{itemize}

  We observe that if $r_1 \in X_1$, $r_2 \in X_2$ and $w \in X_3$,
  then
  \begin{equation}
    \label{eq:1}
    \norm{r_1}_{\LL\infty (\Omega)}, \, \norma{r_2}_{\LL\infty (\Omega)},
    \norm{w}_{\LL\infty (\Omega)} \leq \frac{3\, M-1}{2}.
  \end{equation}

  \medskip

  We need to show that $\mathcal T$ is well defined, in the sense that
  $\left(\rho_1, \rho_2, v\right) \in X$.  First note that $\rho_1$,
  $\rho_2$, and $v$ belong to
  $\HH1\left(0, T; \HH1\left(\Omega\right),
    \HH1\left(\Omega\right)^*\right)$, see~\eqref{eq:hilbert-space},
  and so, by~\cite[Theorem~7.104]{Salsa}, to the space
  $\CC0\left([0, T]; \LL2\left(\Omega\right)\right)$.

  Consider first the case of $\rho_1$.  By~\Cref{prop:a-priori-growth}
  we deduce that
  \begin{displaymath}
    0 \leq \rho_1(t,x) \leq
    \norm{\rho_{1, o}}_{\LL\infty (\Omega)} e^{\left(\alpha + \delta_1 + \frac{3M-1}{2}\, \beta\right)  t}.
  \end{displaymath}
  Therefore,
  \begin{align*}
    \modulo{\rho_1(t,x) - \rho_{1,o}(x)} \le \
    & \abs{\rho_1(t,x)} + \abs{\rho_{1,o}(x)}
    \\
    = \
    & \rho_1(t,x) + \rho_{1,o}(x)
    \\
    \leq \
    & \frac{M-1}{2}\left( 1 + e^{\left(\alpha + \delta_1 + \frac{3M-1}{2}\, \beta\right)  t}\right).
  \end{align*}
  By~\eqref{eq:T-small},
  \begin{displaymath}
    \sup_{t \in [0, T]}\norm{\rho_1(t) - \rho_{1, o}}_{\LL\infty (\Omega)}
    \leq
    \frac{M-1}{2}\left( 1 + e^{\left(\alpha + \delta_1 + \frac{3M-1}{2}\, \beta\right)  T}\right)
    < M,
  \end{displaymath}
  proving that $\rho_1 \in X_1$.

  Pass now to $\rho_2$. \Cref{prop:a-priori-growth} yields
  \begin{displaymath}
    0 \leq \rho_2(t,x) \leq
    \left( \norma{\rho_{2,o}}_{\LL\infty(\Omega)} + \frac{\beta \, \norm{r_1\,w}_{\LL\infty(\Omega_t)}}{\delta_2}\right)
    e^{\delta_2 \, t} - \frac{\beta \, \norm{r_1\,w}_{\LL\infty(\Omega_t)}}{\delta_2}.
  \end{displaymath}
  By~\eqref{eq:1}, we deduce that
  \begin{displaymath}
    \norm{r_1 \, w}_{\LL\infty (\Omega)} \leq \frac{(3M-1)^2}{4}.
  \end{displaymath}
  Thus,
  \begin{align*}
    \abs{\rho_2(t,x) - \rho_{2,o}(x)}
    \leq \
    & \norma{\rho_{2,o}}_{\LL\infty(\Omega)} \left( 1 + e^{\delta_2 \, t}\right)
      +
      \frac{\beta \, \norm{r_1\,w}_{\LL\infty(\Omega_t)}}{\delta_2} \left(
      e^{\delta_2 \, t} -1
      \right)
    \\
    \leq \
    & \frac{M-1}{2}\left( 1 + e^{\delta_2 \,t}\right)
      + \frac{\beta}{\delta_2} \, \frac{(3M-1)^2}{4} \left( e^{\delta_2 \, t } -1\right).
  \end{align*}
  By~\eqref{eq:T-small} we obtain
  \begin{align*}
    \sup_{t \in [0, T]}\norm{\rho_2(t) - \rho_{2, o}}_{\LL\infty (\Omega)}
    \leq \
    & \frac{M-1}{2}\left( 1 + e^{\delta_2 \,T}\right)
      + \frac{\beta}{\delta_2} \, \frac{(3M-1)^2}{4} \left( e^{\delta_2 \, T } -1\right)
    \\
    \leq \
    & \left(M - \frac{1}{2} \right)+ \frac12 = M,
  \end{align*}
  proving that $\rho_2 \in X_2$.

  Consider now $v$.  \Cref{prop:a-priori-growth} yields
  \begin{align*}
    0 \leq
    v(t,x) \leq\
    &
      \left( \norma{v_{o}}_{\LL\infty(\Omega)} + \frac{b \, \delta_2 \, \norm{r_2}_{\LL\infty(\Omega_t)} + U}{B \, \norm{r_1}_{\LL\infty(\Omega_t)} + \delta_v}\right)
      e^{\left(B \, \norm{r_1}_{\LL\infty(\Omega_t)} + \delta_v \right)\, t}
    \\
    & -
      \frac{b \, \delta_2 \, \norm{r_2}_{\LL\infty(\Omega_t)} + U}{B \, \norm{r_1}_{\LL\infty(\Omega_t)} + \delta_v}.
  \end{align*}

  Exploiting~\eqref{eq:M} and~\eqref{eq:1}, we obtain
  \begin{align*}
    0 \leq
    v(t,x) \leq\
    &
      \norma{v_o}_{\LL\infty (\Omega)} \,  e^{\left(B \, \norm{r_1}_{\LL\infty(\Omega_t)} + \delta_v \right)\, t}
    \\
    & +
      \frac{b \, \delta_2 \, \norm{r_2}_{\LL\infty(\Omega_t)} + U}{B \, \norm{r_1}_{\LL\infty(\Omega_t)} + \delta_v}
      \left( e^{\left(B \, \norm{r_1}_{\LL\infty(\Omega_t)} + \delta_v \right)\, t}-1\right)
    \\
    \leq \
    &
      \frac{M-1}{2} +
      \frac{b \, \delta_2 \, (3M-1) + 2 \, U}{2 \, \delta_v}
      \left( e^{\left(B \,\frac{3M-1}{2}+ \delta_v \right)\, t} -1 \right).
  \end{align*}
  Thus
  \begin{align*}
    \abs{v(t,x) - v_{o}(x)}
    \leq \
    & \frac{M-1}{2}\left( 1 + e^{\left(B \,\frac{3M-1}{2}+ \delta_v \right)\, t}\right)
    \\
    & +  \frac{b \, \delta_2 \, (3M-1) + 2 \, U}{2 \, \delta_v}
      \left( e^{\left(B \,\frac{3M-1}{2}+ \delta_v \right)\, t} -1\right).
  \end{align*}
  By~\eqref{eq:T-small} we obtain
  \begin{align*}
    \sup_{t \in [0, T]}\norm{v(t) - v_o}_{\LL\infty(\Omega)}
    \leq \
    & \frac{M-1}{2}\left( 1 + e^{\left(B \,\frac{3M-1}{2}+ \delta_v \right)\, T}\right)
    \\
    & +  \frac{b \, \delta_2 \, (3M-1) + 2 \, U}{2 \, \delta_v}
      \left( e^{\left(B \,\frac{3M-1}{2}+ \delta_v \right)\, T} -1\right)
    \\
    \leq \
    & \left(M - \frac{1}{2} \right)+ \frac12 = M,
  \end{align*}
  proving that $v\in X_3$.

  \medskip

  Fix $\left(\bar r_1, \bar r_2, \bar w\right) \in X$ and
  $\left(\tilde r_1, \tilde r_2, \tilde w\right) \in X$.  Define
  $\left(\bar \rho_1, \bar \rho_2, \bar v\right) = \mathcal T
  \left(\bar r_1, \bar r_2, \bar w\right)$ and
  $\left(\tilde \rho_1, \tilde \rho_2, \tilde v\right) = \mathcal T
  \left(\tilde r_1, \tilde r_2, \tilde w\right)$.

  Note that
  \begin{equation*}
    \left\{
      \begin{array}{l}
        \partial_t \left(\bar \rho_1 - \tilde \rho_1\right) = \Delta
        \left(\bar \rho_1 - \tilde \rho_1\right)
        + \left(\alpha - \delta_1 - \beta \bar w\right)
        \left(\bar \rho_1 - \tilde \rho_1\right)
        + \beta \left(\tilde w - \bar w\right) \tilde \rho_1
        \\
        \left(\bar \rho_1 - \tilde \rho_1\right)(0, x) = 0
        \\
        \partial_\nu \left(\bar \rho_1 - \tilde \rho_1\right)(t, \xi) = 0.
      \end{array}
    \right.
  \end{equation*}
  \Cref{thm:parBase} implies that, for $t \in [0, T]$,
  \begin{align*}
    & \norma{\bar \rho_1(t) - \tilde \rho_1(t)}_{\LL2\left(\Omega\right)}^2
    \\
    \le
    & e^{2 \norma{\alpha - \delta_1 - \beta \bar w}_{\LL\infty\left(\Omega_T\right)}t}
      \int_0^t \norma{\beta \left(\tilde w(s) - \bar w(s)\right) \tilde \rho_1(s)}_{\HH1\left(\Omega\right)^*}^2 \dd s
    \\
    \le
    & e^{2 \left(\alpha + \delta_1 + \beta \frac{3M-1}{2}\right)T}
      \int_0^t \norma{\beta \left(\tilde w(s) - \bar w(s)\right) \tilde \rho_1(s)}_{\LL2\left(\Omega\right)}^2 \dd s
    \\
    \le
    & \beta^2 e^{2 \left(\alpha + \delta_1 + \beta \frac{3M-1}{2}\right)T}
      \frac{\left(3M-1\right)^2}{4}
      \int_0^t \norma{\tilde w(s) - \bar w(s)}_{\LL2\left(\Omega\right)}^2 \dd s
    \\
    \le
    & \beta^2 e^{2 \left(\alpha + \delta_1 + \beta \frac{3M-1}{2}\right)T}
      \frac{\left(3M-1\right)^2}{4}
      \sup_{s \in [0, T]} \norma{\tilde w(s) - \bar w(s)}_{\LL2\left(\Omega\right)}^2 T,
  \end{align*}
  where we used the fact that $\tilde \rho_1 \in X_1$.  Therefore,
  using~\eqref{eq:T-small},
  \begin{align*}
    \norma{\bar \rho_1 - \tilde \rho_1}_{X_1}
    & \le \beta e^{\left(\alpha + \delta_1 + \beta \frac{3M-1}{2}\right)T}
      \frac{3M-1}{2}  \sqrt{T}
      \norma{\tilde w - \bar w}_{X_{3}}
    \\
    & \le \frac{1}{4} \norma{\tilde w - \bar w}_{X_{3}}.
  \end{align*}

  Proceed analogously for the other two equations.  Indeed, since
  \begin{equation*}
    \left\{
      \begin{array}{l}
        \partial_t \left(\bar \rho_2 - \tilde \rho_2\right) = \Delta
        \left(\bar \rho_2 - \tilde \rho_2\right)
        - \delta_2 \left(\bar \rho_2 - \tilde \rho_2\right)
        -\beta \, \bar r_1 \, \bar w
        + \beta \, \tilde r_1 \, \tilde w
        \\
        \left(\bar \rho_2 - \tilde \rho_2\right)(0, x) = 0
        \\
        \partial_\nu \left(\bar \rho_2 - \tilde \rho_2\right)(t, \xi) = 0,
      \end{array}
    \right.
  \end{equation*}
  \Cref{thm:parBase} implies that, for $t \in [0, T]$,
  \begin{displaymath}
    \norm{\bar \rho_2(t) - \tilde \rho_2(t)}_{\LL2\left(\Omega\right)}^2
    \le
    e^{2 \, \delta_2 \, t}
    \int_0^t \beta^2
    \norma{
      \left(\bar r_1 \, \bar w - \tilde r_1 \, \tilde w\right) (s)
    }_{H^1 (\Omega)^*} \dd{s}.
  \end{displaymath}
  Observe that for $s \in [0, T]$, we have
  \begin{align*}
    & \norm{\left(\bar r_1 \, \bar w
      - \tilde r_1 \,  \tilde w\right)(s)}_{\HH1\left(\Omega\right)^*}
    \\
    \le
    & \norm{\bar r_1(s) \left(\bar w(s)
      - \tilde w(s)\right)}_{\HH1\left(\Omega\right)^*}
      +\norm{\left(\bar r_1(s)
      - \tilde r_1(s)\right) \tilde w(s)}_{\HH1\left(\Omega\right)^*}
    \\
    \le
    & \frac{3\, M-1}{2} \left( \norm{\left(\bar w - \tilde w\right) (s)}_{\LL2\left(\Omega\right)}
      + \norm{\left(\bar r_1 - \tilde r_1\right) (s)}_{\LL2\left(\Omega\right)}\right),
  \end{align*}
  since $\bar r_1 \in X_1$ and $\tilde w \in X_3$ and we
  exploited~\eqref{eq:1}.  Therefore, using~\eqref{eq:T-small},
  \begin{align*}
    \norm{\bar \rho_2 - \tilde \rho_2}_{X_2}
    \le \
    & e^{\delta_2 \, T} \, \beta \, \frac{3 \, M -1}{2} \, \sqrt{T}
      \left( \norm{\bar w - \tilde w}_{X_3}
      + \norm{\bar r_1 - \tilde r_1}_{X_1}\right)
    \\
    \leq \
    & \frac{1}{4} \left( \norm{\bar w - \tilde w}_{X_3}
      + \norm{\bar r_1 - \tilde r_1}_{X_1}\right).
  \end{align*}
  Lastly we have
  \begin{equation*}
    \left\{
      \begin{array}{l}
        \partial_t \left(\bar v - \tilde v\right) = \Delta
        \left(\bar v - \tilde v\right)
        - \left( \delta_v  + B \, \bar r_1 \right)
        \left(\bar v - \tilde v\right)
        -B \, \left(\bar r_1 - \tilde r_1 \right) \tilde v
        \\
        \qquad \qquad \quad\, + b \, \delta_2 \left(\bar r_2 - \tilde r_2\right)
        \\
        \left(\bar v - \tilde v\right)(0, x) = 0
        \\
        \partial_\nu \left(\bar v - \tilde v\right)(t, \xi) = 0.
      \end{array}
    \right.
  \end{equation*}
  Again, \Cref{thm:parBase} implies that, for $t \in [0, T]$,
  \begin{align*}
    &\norma{\bar v(t) - \tilde v(t)}_{\LL2\left(\Omega\right)}^2
    \\
    \le \
    & e^{2 \norma{\delta_v + B \, \bar r_1}_{\LL\infty\left(\Omega_T\right)}t}
      \int_0^t \norma{\beta \, \delta_2 \left(\bar r_2(s) - \tilde r_2(s)\right)
      - B \, \left(\bar r_1(s) - \tilde r_1(s)\right) }_{\LL2\left(\Omega\right)}^2
      \dd s
    \\
    \le\
    & e^{2 \left(\delta_v + B \, \frac{3M-1}{2}\right)T}
      \int_0^t \norma{\beta \, \delta_2 \left(\bar r_2(s) - \tilde r_2(s)\right)
      - B \, \left(\bar r_1(s) - \tilde r_1(s)\right) \tilde v(s)
      }_{\LL2\left(\Omega\right)}^2
      \dd s
    \\
    \le \
    & e^{2 \left(\delta_v + B \, \frac{3M-1}{2}\right)T}
      T \, \Biggl(\beta \, \delta_2
      \sup_{s \in [0,T]} \norma{\bar r_2(s) - \tilde r_2(s)}_{\LL2\left(\Omega\right)}
    \\
    & \qquad\left.+ B \, \frac{3 \, M -1}{2}
      \sup_{s \in [0,T]} \norma{\bar r_1(s) - \tilde r_1(s)}_{\LL2\left(\Omega\right)}
      \right)^2,
  \end{align*}
  where we used the fact that $\tilde v \in X_3$.  Hence,
  using~\eqref{eq:T-small},
  \begin{align*}
    &
      \norma{\bar v - \tilde v}_{X_3}
    \\
    \le
    &  e^{\left(\delta_v + B \, \frac{3M-1}{2}\right)T} \,
      \sqrt{T} \,
      \max\left\{ b \, \delta_2,  B \, \frac{3 \, M-1}{2}\right\}
      \left(\norma{\bar r_1 - \tilde r_1}_{X_1}
      + \norma{\bar r_2 - \tilde r_2}_{X_2}\right)
    \\
    \le
    & \frac{1}{4}  \left(\norma{\bar r_1 - \tilde r_1}_{X_1}
      + \norma{\bar r_2 - \tilde r_2}_{X_2}\right).
  \end{align*}

  Therefore, for $t \in [0, T]$,
  \begin{align*}
    &
      \norm{\bar \rho_1 - \tilde \rho_1}_{X_1}
      + \norm{\bar \rho_2 - \tilde \rho_2}_{X_2}
      + \norm{\bar v - \tilde v}_{X_3}
    \\
    \le\
    &\frac14
      \left(
      2 \, \norma{\bar r_1 - \tilde r_1}_{X_1}
      + \norma{\bar r_2 - \tilde r_2}_{X_2}
      +2 \, \norma{\bar w - \tilde w}_{X_3}
      \right)
    \\
    \le \
    & \frac12 \left(
      \norma{\bar r_1 - \tilde r_1}_{X_1}
      + \norma{\bar r_2 - \tilde r_2}_{X_2}
      + \norma{\bar w - \tilde w}_{X_3}
      \right),
  \end{align*}
  proving that $\mathcal{T}$ is a contraction. Banach Fixed Point
  Theorem implies that the map $\mathcal{T}$ admits a unique fixed
  point in $X$, thus ensuring the existence of solutions
  to~\eqref{eq:PDE-model-3x3} on the time interval $[0,T]$. Observe
  that the solution $(\rho_1, \rho_2, v)$ preserves the positivity of
  the initial data $(\rho_{1,o},\rho_{2,o}, v_o)$.

  If the initial data are in
  $\HH1  (\Omega; \R_+)$, then, due to
  \Cref{prop:regularity}, each
  component of $\mathcal{T}$ maps
  $\LL2 \left(0,T; \HH2 (\Omega)\right)$ into itself.
  Therefore, repeating the same
  argument as above yields
  $\rho_2, \, \rho_2, \, v \in \LL2\left(0,T; \HH2(\Omega)\right)$.
\end{proof}

The next result deals with the global existence of solutions.

\begin{theorem}
  \label{thm:global-existence}
  Assume $\alpha$, $\beta$, $\delta_1$, $\delta_2$, $\delta_v$, $b$,
  $B$, and $U$ fixed positive constants.  Let $\Omega \subseteq \R^N$
  be an open, connected, and bounded domain, with smooth boundary
  $\partial \Omega$. Fix
  $\rho_{1,o}, \rho_{2,o}, v_{o} \in
    \LL\infty\left(\Omega; \R_+\right)$ and
  {$u \in \LL\infty(\R \times \Omega; \R_+)$, with
    $\norm{u}_{\LL\infty(\R \times \Omega)} \le U$}. Then, for every
  $T>0$, there exists a unique solution
  $\left(\rho_1, \rho_2, v\right)$
  to~\eqref{eq:PDE-model-3x3}-\eqref{eq:initial-condition}-\eqref{eq:boundary-condition}
  on the time interval $[0, T]$, in the sense of~\Cref{def:sol}.

  Moreover, for a.e. $(t, x) \in [0, T] \times \Omega$, we have the
  following estimates:
  \begin{align}
    \label{eq:general-growth-est-rho1}
    0
    & \le \rho_1(t, x) \le \norm{\rho_{1,o}}_{\LL\infty}
      e^{\left|\alpha - \delta_1\right| T},
    \\
    \label{eq:general-growth-est-rho2}
    0
    & \le \rho_2(t, x) \le \left(\norm{\rho_{1,o}}_{\LL\infty}
      + \norm{\rho_{2,o}}_{\LL\infty}\right)
      e^{\alpha T},
    \\
    \label{eq:general-growth-est-v}
    0
    & \le v(t, x) \le \norm{v_{o}}_{\LL\infty}
      + \left( b \delta_2 \left(\norm{\rho_{1,o}}_{\LL\infty}
      + \norm{\rho_{2,o}}_{\LL\infty}\right)
      e^{\alpha T} + U \right) T.
  \end{align}
\end{theorem}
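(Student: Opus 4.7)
The plan is to bootstrap the local well-posedness of \Cref{thm:local_wellposedness} into global well-posedness by deriving a priori $\LL\infty$ bounds that grow at most exponentially on any bounded horizon, then applying a standard continuation argument. The crucial point is that the pointwise bounds \eqref{eq:general-growth-est-rho1}--\eqref{eq:general-growth-est-v} depend only on $T$ and on the $\LL\infty$ norms of the initial data and control, not on the length of the local existence interval.

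\textbf{Step 1: a priori $\LL\infty$ bounds.} Suppose a nonnegative solution exists on $[0,\tau]$; positivity is given by \Cref{thm:local_wellposedness}. For $\rho_1$: since $v,\rho_1\ge 0$, the term $-\beta\rho_1 v$ is nonpositive, so $\rho_1$ is a weak subsolution of $\partial_t z = \Delta z + (\alpha-\delta_1)z$ with Neumann data, whence \eqref{eq:general-growth-est-rho1} via the scalar comparison principle collected in \Cref{sec:base}. For $\rho_2$: the sum $\sigma := \rho_1 + \rho_2$ satisfies
\begin{equation*}
\partial_t \sigma - \Delta \sigma = (\alpha - \delta_1)\rho_1 - \delta_2 \rho_2 \le \alpha\, \sigma,
\end{equation*}
with Neumann boundary conditions, so the same scalar estimate yields $\sigma \le (\norm{\rho_{1,o}}_{\LL\infty} + \norm{\rho_{2,o}}_{\LL\infty}) e^{\alpha t}$, and \eqref{eq:general-growth-est-rho2} follows from $0\le\rho_2\le\sigma$. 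For $v$: using $-B\rho_1 v - \delta_v v \le 0$ and $u\le U$, $v$ is a weak subsolution of $\partial_t z - \Delta z = b\, \delta_2 \rho_2 + U$ with Neumann data; combining with the previous bound on $\rho_2$, the comparison principle applied to this forced heat equation yields \eqref{eq:general-growth-est-v}.

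\textbf{Step 2: continuation and uniqueness.} Fix $T>0$ and let $T^* = \sup\{\tau\in(0,T] : \text{a solution exists on } [0,\tau]\}$; $T^*>0$ by \Cref{thm:local_wellposedness}. Suppose by contradiction $T^* < T$. Step 1 provides a constant $K=K(T,\rho_{1,o},\rho_{2,o},v_o,U)$ such that $\norm{\rho_1(t)}_{\LL\infty}, \norm{\rho_2(t)}_{\LL\infty}, \norm{v(t)}_{\LL\infty}\le K$ uniformly on $[0,T^*)$. For any $t_0\in[0,T^*)$, applying \Cref{thm:local_wellposedness} with new initial data $(\rho_1(t_0),\rho_2(t_0),v(t_0))\in\LL\infty(\Omega;\R_+)$ and the shifted control $u(t_0+\cdot,\cdot)$ produces a local solution on $[t_0,t_0+\tau_*]$, where $\tau_*>0$ depends only on $K$ and $U$ (not on $t_0$), as can be read off formula~\eqref{eq:T-small} with $M$ replaced by a universal bound determined by $K$. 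Choosing $t_0$ with $T^* - t_0 < \tau_*$ and concatenating with the existing solution contradicts the maximality of $T^*$, hence $T^* = T$. Global uniqueness follows by the same scheme: two solutions on $[0,T]$ coincide on a maximal initial interval by local uniqueness in \Cref{thm:local_wellposedness}, and this interval must be all of $[0,T]$.

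\textbf{Main obstacle.} The delicate point is making the comparison arguments of Step 1 rigorous in the weak sense of \Cref{def:sol}, in particular dropping the sign-definite couplings $-\beta\rho_1 v$ and $-(B\rho_1+\delta_v)v$ as favorable terms when deriving the upper bound. The cleanest route is either to invoke directly the comparison machinery in \Cref{sec:base} with the nonnegative nonlinearities absorbed into the zero-order coefficient, or to test the weak formulation against a truncation such as $(\rho_1 - \bar z)_+$, where $\bar z$ is the claimed majorant, and conclude via Gronwall that the positive part vanishes identically.
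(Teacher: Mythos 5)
Your proposal is correct and follows essentially the same route as the paper: the same a priori $\LL\infty$ bounds (comparison for $\rho_1$ alone, then for the sum $\rho_1+\rho_2$ to handle the coupling term, then for $v$ using the resulting bound on $\rho_2$ and $u\le U$), combined with a continuation argument based on \Cref{thm:local_wellposedness} and \Cref{prop:a-priori-growth}. The only cosmetic difference is in the continuation step, where the paper extends the solution by continuity to the maximal time $\overline T$ and restarts there, while you restart at $t_0<T^*$ using a uniform lower bound on the local existence time; both are standard and equivalent here.
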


\begin{proof}
  Define
  \begin{equation*}
    \overline T = \sup \left\{T > 0: \textrm{ the solution
        to~\eqref{eq:PDE-model-3x3} exists in }[0, T]\right\}.
  \end{equation*}
  Clearly~\Cref{thm:local_wellposedness} implies that
  $\overline T > 0$.  Assume by contradiction that
  $\overline T < +\infty$.  Since $\rho_1$ and $v$ are positive
  by~\eqref{eq:positivity-solution} and since $\beta > 0$, then
  $\rho_1$ is a subsolution to
  \begin{equation*}
    \partial_t \rho_1 \le \Delta \rho_1 + \left(\alpha - \delta_1\right)
    \rho_1.
  \end{equation*}
  Hence~\Cref{prop:a-priori-growth} implies that
  \begin{equation}
    \label{eq:growth-est-rho1}
    0 \le \rho_1(t, x) \le \norm{\rho_{1,o}}_{\LL\infty}
    e^{\left|\alpha - \delta_1\right| \overline T},
  \end{equation}
  for every $t < \overline T$ and for a.e.~$x \in \Omega$.

  Consider now the equation for the sum $\rho_1 + \rho_2$:
  \begin{equation*}
    \pt (\rho_1 + \rho_2) = \Delta (\rho_1 + \rho_2)
    + (\alpha - \delta_1) \rho_1 - \delta_2 \, \rho_2.
  \end{equation*}
  Since $\alpha > 0$, $\delta_1 > 0$, $\delta_2 > 0$, and
  $\rho_2 \ge 0$ by~\eqref{eq:positivity-solution}, $\rho_1 + \rho_2$
  is a subsolution to
  \begin{equation*}
    \pt (\rho_1 + \rho_2)
    \leq
    \Delta (\rho_1 + \rho_2) + \alpha (\rho_1 + \rho_2).
  \end{equation*}
  Hence~\Cref{prop:a-priori-growth} implies that
  \begin{equation*}
    0 \le \rho_1(t, x) + \rho_2(t, x) \le \left(\norm{\rho_{1,o}}_{\LL\infty}
      + \norm{\rho_{2,o}}_{\LL\infty}\right)
    e^{\alpha \overline T},
  \end{equation*}
  for every $t < \overline T$ and for a.e.~$x \in \Omega$, so that
  \begin{equation}
    \label{eq:growth-est-rho2}
    0 \le \rho_2(t, x) \le \left(\norm{\rho_{1,o}}_{\LL\infty}
      + \norm{\rho_{2,o}}_{\LL\infty}\right)
    e^{\alpha \overline T},
  \end{equation}
  for every $t < \overline T$ and for a.e. $x \in \Omega$.

  Using the estimates~\eqref{eq:growth-est-rho1}
  and~\eqref{eq:growth-est-rho2}, we deduce that $v$ is subsolution to
  \begin{equation*}
    \pt v \le \Delta v + b \delta_2 \left(\norm{\rho_{1,o}}_{\LL\infty}
      + \norm{\rho_{2,o}}_{\LL\infty}\right)
    e^{\alpha \overline T} + U,
  \end{equation*}
  where we used the fact that $b > 0$, $\delta_2 > 0$, $B > 0$,
  $\delta_v > 0$, $\norm{u}_{\LL\infty} \le U$, and $\rho_1 \ge 0$,
  $v \ge 0$ by~\eqref{eq:positivity-solution}.
  Hence~\Cref{prop:a-priori-growth} implies that
  \begin{equation}
    \label{eq:growth-est-v}
    0 \le v(t, x) \le \norm{v_{o}}_{\LL\infty}
    + \left( b \delta_2 \left(\norm{\rho_{1,o}}_{\LL\infty}
        + \norm{\rho_{2,o}}_{\LL\infty}\right)
      e^{\alpha \overline T} + U \right) \overline T,
  \end{equation}
  for every $t < \overline T$ and for a.e.~$x \in \Omega$.

  Standard arguments in Sobolev spaces dependent on time together with
  the estimates~\eqref{eq:growth-est-rho1},
  \eqref{eq:growth-est-rho2}, and~\eqref{eq:growth-est-v} and the
  assumptions on $\Omega$ permit to extend $\rho_1$, $\rho_2$, and $v$
  by continuity at time $\overline T$.  Since
  $\rho_1\left(\overline T\right) \in \LL2\left(\Omega\right)$,
  $\rho_2\left(\overline T\right) \in \LL2\left(\Omega\right)$, and
  $v\left(\overline T\right) \in \LL2\left(\Omega\right)$,
  \Cref{thm:local_wellposedness} implies that the solution
  $\left(\rho_1, \rho_2, v\right)$ exists also for times bigger than
  $\overline T$. This is in contradiction with the definition of
  $\overline T$.  Finally the
  estimates~\eqref{eq:general-growth-est-rho1},
  \eqref{eq:general-growth-est-rho2},
  and~\eqref{eq:general-growth-est-v} easily follow
  from~\eqref{eq:growth-est-rho1}, \eqref{eq:growth-est-rho2},
  and~\eqref{eq:growth-est-v}.
\end{proof}



\section{Dependence from the control \texorpdfstring{$u$}{u}}
\label{sec:dep-sol-control}

In this part we show that the solution to~\eqref{eq:PDE-model-3x3}
continuously depends on the control $u$, viewed as a function in
{$\LL2\left([0, T]\times \Omega; [0, U]\right)$} endowed with
the strong topology.

\begin{theorem}
  \label{thm:dependence-from-u-H1}
  Assume $\alpha$, $\beta$, $\delta_1$, $\delta_2$, $\delta_v$, $b$,
  $B$, and $U$ fixed positive constants.  Let $\Omega \subseteq \R^N$
  be an open, connected, and bounded domain, with smooth boundary
  $\partial \Omega$. Fix $T > 0$,
  $\rho_{1,o}, \rho_{2,o}, v_{o} \in \LL\infty\left(\Omega; \R_+\right)$ and
  {$\bar u, \tilde u \in \LL\infty(\R \times \Omega; [0, U])$}.
  Define $\left(\bar \rho_1, \bar \rho_2, \bar v\right)$ and
  $\left(\tilde \rho_1, \tilde \rho_2, \tilde v\right)$ the solutions
  to~\eqref{eq:PDE-model-3x3}-\eqref{eq:initial-condition}-\eqref{eq:boundary-condition}
  on the time interval $[0, T]$ with controls $\bar u$ and $\tilde u$
  respectively.

  Then there exists a positive constant $C$, depending on the norms
  $\norm{\rho_{1,o}}_{\LL\infty\left(\Omega\right)}$,
  $\norm{\rho_{2,o}}_{\LL\infty\left(\Omega\right)}$,
  $\norm{v_{o}}_{\LL\infty\left(\Omega\right)}$, on $T$, and on the
  constants $\alpha$, $\delta_1$, $B$, $\beta$, $b$, $\delta_2$,
  $\delta_v$ such that, for every $t \in [0, T]$,
  \begin{equation}
    \label{eq:dependence-from-u}
    \begin{split}
      & \norm{\bar \rho_1(t) - \tilde
        \rho_1(t)}_{\LL2\left(\Omega\right)} + \norm{\bar \rho_2(t) -
        \tilde \rho_2(t)}_{\LL2\left(\Omega\right)} + \norm{\bar v(t)
        - \tilde v(t)}_{\LL2\left(\Omega\right)}
      \\
      \le & C \,{\norm{\bar u - \tilde u}_{\LL2\left((0,t)
            \times \Omega\right)}},
    \end{split}
  \end{equation}
  and
  \begin{equation}
    \label{eq:dependence-from-u-H1}
    \begin{split}
      & \int_0^t \!\!\left( \norm{\bar \rho_1(\tau) \!-\! \tilde
          \rho_1(\tau)}_{\HH1\left(\Omega\right)}^2 \!+\! \norm{\bar
          \rho_2(\tau) - \tilde
          \rho_2(\tau)}_{\HH1\left(\Omega\right)}^2 \!+\! \norm{\bar
          v(\tau) - \tilde v(\tau)}_{\HH1\left(\Omega\right)}^2
      \right) \!\dd\tau
      \\
      \le \ & C {\norm{\bar u - \tilde u}^2_{\LL2\left((0,t)
            \times \Omega\right)}}.
    \end{split}
  \end{equation}
\end{theorem}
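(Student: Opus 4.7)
Introduce the differences $R_1 = \bar\rho_1 - \tilde\rho_1$, $R_2 = \bar\rho_2 - \tilde\rho_2$, $V = \bar v - \tilde v$. Subtracting the two weak formulations of~\eqref{eq:PDE-model-3x3} and using the splitting
\begin{equation*}
\bar\rho_1 \bar v - \tilde\rho_1 \tilde v = \bar\rho_1 \, V + R_1 \, \tilde v,
\end{equation*}
the triple $(R_1, R_2, V)$ is a weak solution, with zero initial datum and homogeneous Neumann boundary conditions, of the linear parabolic system whose sources are bilinear in $(R_1, V)$ and, in the third equation, contain the forcing $\bar u - \tilde u$. The key quantitative ingredient is the uniform $\LL\infty$ bound $K$ on $\bar\rho_1, \tilde\rho_1, \bar v, \tilde v$ provided by~\Cref{thm:global-existence}, depending only on $T$ and on the $\LL\infty$ norms of the initial data.

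The plan is a standard $L^2$ energy estimate followed by Gronwall. For each of $R_1, R_2, V$ I take the corresponding PDE as a duality identity in $\HH1(\Omega)^*$ and test against the function itself (which lies in $\HH1(\Omega)$ for a.e.\ $t$), obtaining $\tfrac12 \frac{d}{dt}\norm{R_i(t)}_{\LL2(\Omega)}^2 = \langle \dot R_i(t), R_i(t)\rangle$. The Laplacian contribution collapses to $-\norm{\nabla R_i}_{\LL2(\Omega)}^2$ thanks to the Neumann condition, while the bilinear sources are controlled by Young's inequality combined with the $\LL\infty$ bound $K$: for instance $\abs{\beta \int_\Omega \bar\rho_1 \, V\, R_1 \dd{x}} \le \beta K \bigl(\norm{R_1}_{\LL2(\Omega)}^2 + \norm{V}_{\LL2(\Omega)}^2\bigr)$, and analogously for the other cubic terms. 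The forcing in the $V$-equation is absorbed via $\abs{\int_\Omega (\bar u - \tilde u) V \dd{x}} \le \tfrac12 \norm{V}_{\LL2(\Omega)}^2 + \tfrac12 \norm{\bar u(t) - \tilde u(t)}_{\LL2(\Omega)}^2$. Summing the three inequalities and setting $E(t) = \norm{R_1(t)}_{\LL2(\Omega)}^2 + \norm{R_2(t)}_{\LL2(\Omega)}^2 + \norm{V(t)}_{\LL2(\Omega)}^2$ yields
\begin{equation*}
\frac{d}{dt}E(t) + 2\bigl(\norm{\nabla R_1}_{\LL2(\Omega)}^2 + \norm{\nabla R_2}_{\LL2(\Omega)}^2 + \norm{\nabla V}_{\LL2(\Omega)}^2\bigr) \le C_1 E(t) + \norm{\bar u(t) - \tilde u(t)}_{\LL2(\Omega)}^2,
\end{equation*}
where $C_1$ depends only on the model parameters and on $K$.

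Estimate~\eqref{eq:dependence-from-u} follows by dropping the nonnegative gradient terms and applying Gronwall's lemma, using $E(0) = 0$: we obtain $E(t) \le e^{C_1 T}\norm{\bar u - \tilde u}_{\LL2((0,t)\times\Omega)}^2$, and each summand on the left-hand side of~\eqref{eq:dependence-from-u} is then bounded by $\sqrt{E(t)}$. For~\eqref{eq:dependence-from-u-H1}, I integrate the same differential inequality on $(0,t)$ \emph{keeping} the gradient terms, and plug in the pointwise bound on $E$ just obtained: this controls $\int_0^t \bigl(\norm{\nabla R_1}_{\LL2(\Omega)}^2 + \norm{\nabla R_2}_{\LL2(\Omega)}^2 + \norm{\nabla V}_{\LL2(\Omega)}^2\bigr) \dd{\tau}$ by a constant times $\norm{\bar u - \tilde u}_{\LL2((0,t)\times\Omega)}^2$; adding the time integral of the pointwise $\LL2$-estimate yields the full $\HH1$-control. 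No single step is hard; the main bookkeeping point is that the bilinear sources $\bar\rho_1 V + R_1 \tilde v$ can be absorbed into $E(t)$ only thanks to the uniform $\LL\infty$-bound on the six solution components, which is why~\Cref{thm:global-existence} is invoked before launching the energy argument.
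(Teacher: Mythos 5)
Your proposal is correct and follows essentially the same route as the paper: the same splitting $\bar\rho_1\bar v-\tilde\rho_1\tilde v=\bar\rho_1 V+R_1\tilde v$, testing each difference equation against the difference itself, absorbing the bilinear terms via Young's inequality and the uniform $\LL\infty$ bounds of~\Cref{thm:global-existence}, then Gronwall for~\eqref{eq:dependence-from-u} and re-integration keeping the gradient (equivalently, the weak-coercivity $\HH1$) terms for~\eqref{eq:dependence-from-u-H1}. The only cosmetic difference is that the paper phrases the retention of the $\HH1$ contribution through the weak coercivity of the bilinear forms rather than by isolating $\norm{\nabla R_i}_{\LL2(\Omega)}^2$, which changes nothing of substance.
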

\begin{proof}
  Fix two control functions
  {$\bar u, \tilde u \in \LL\infty\left([0, T] \times \Omega;
      [0, U]\right)$} and denote by
  $\left(\bar \rho_1, \bar \rho_2, \bar v\right)$ and by
  $\left(\tilde \rho_1, \tilde \rho_2, \tilde v\right)$ the
  corresponding solutions
  to~\eqref{eq:PDE-model-3x3}-\eqref{eq:initial-condition}-\eqref{eq:boundary-condition}.

  Consider the difference between the equations for $\bar v$ and
  $\tilde v$ in~\eqref{eq:PDE-model-3x3}, and rearrange it as follows
  \begin{displaymath}
    \pt \!\left(\bar v - \tilde v\right) =
    \Delta \left(\bar v - \tilde v\right)
    - \left(\delta_v + B \bar \rho_1\right) \left(\bar v - \tilde v\right)
    + b \delta_2 \left(\bar \rho_2 \!-\! \tilde \rho_2\right)
    -B \tilde v \left(\bar \rho_1 \!-\! \tilde \rho_1\right)
    + \bar u - \tilde u.
  \end{displaymath}
  Observe first that the bilinear form appearing above is weakly
  coercive, so in particular
  \begin{equation}
    \label{eq:bilin-coercive-v}
    \begin{split}
      \int_\Omega \left[ \nabla \left(\bar v - \tilde v\right) \cdot
        \nabla \left(\bar v - \tilde v\right) + \left(\delta_v + B
          \bar \rho_1\right) \left(\bar v - \tilde v\right)^2 \right]
      \dd{x} &
      \\
      + \left(\frac12 + \delta_v + B \norm{\bar \rho_1
          (t)}_{\LL\infty\left(\Omega\right)}\right) \norm{\bar v -
        \tilde v}_{\LL2\left(\Omega\right)}^2 & \ge \
      \frac12\norm{\bar v - \tilde v}^2_{\HH1 \left(\Omega\right)}.
    \end{split}
  \end{equation}
  By~\Cref{def:sol}, Point 5., we have that, for a.e. $t \in [0, T]$
  and for all $w \in \HH1 (\Omega)$,
  \begin{align*}
    & \int_\Omega \left(\dot{\bar v} - \dot{\tilde v}\right) w \dd x
      + \int_\Omega \nabla \left(\bar v - \tilde v\right) \cdot
      \nabla w \dd x
      +\int_\Omega \left(\delta_v + B \bar \rho_1\right)
      \left(\bar v - \tilde v \right) w \dd{x}
    \\
    = \
    & b \delta_2 \int_\Omega \left(\bar \rho_2 - \tilde \rho_2\right) w
      \dd x
      - B \int_\Omega \tilde v \left(\bar \rho_1 - \tilde \rho_1\right) w \dd x
      +{\int_\Omega \left(\bar u -\tilde u \right)  w \dd x} .
  \end{align*}
  Taking $w(x) = \left(\bar v(t, x) \!-\! \tilde v(t, x)\right)$ in
  the previous equation and exploiting~\eqref{eq:bilin-coercive-v}, we
  deduce that, for a.e. $t \in [0, T]$,
  \begin{align*}
    & \frac12 \, \frac{\dd{}}{\dd t}
      \norm{\bar v(t) - \tilde v(t)}_{\LL2\left(\Omega\right)}^2
      + \frac12\, \norm{\bar v(t) - \tilde v(t)}
      _{\HH1\left(\Omega\right)}^2
    \\
    \le \
    &  b \delta_2 \int_\Omega \left(\bar \rho_2(t, x)
      - \tilde \rho_2(t, x)\right) \left(\bar v(t, x) - \tilde v(t, x)\right)
      \dd x
    \\
    & - B \int_\Omega \tilde v(t, x) \left(\bar \rho_1(t, x)
      - \tilde \rho_1(t, x)\right) \left(\bar v(t, x)
      - \tilde v(t, x)\right) \dd x
    \\
    & + {\int_\Omega \left(\bar u(t,x) - \tilde u(t,x)\right)
      \left(\bar v(t, x) - \tilde v(t,x)\right) \dd x}
    \\
    & + \left(\frac12 + \delta_v + B
      \norm{\bar \rho_1 (t)}_{\LL\infty\left(\Omega\right)}\right)
      \norm{\bar v (t) -\tilde v (t)}^2_{\LL2\left(\Omega\right)}
    \\
    \le \
    &  \frac{b \, \delta_2}{2}  \left(
      \norm{\bar \rho_2 (t) - \tilde \rho_2(t)}^2_{\LL2 (\Omega)}
      +
      \norm{\bar v (t) - \tilde v (t)}^2_{\LL2 (\Omega)}
      \right)
    \\
    & + \frac{B}{2} \left(
      \norm{\tilde v(t)}_{\LL\infty (\Omega)}^2
      \norm{\bar v (t) - \tilde v(t)}^2_{\LL2 (\Omega)}
      +
      \norm{\bar \rho_1 (t) - \tilde \rho_1(t)}^2_{\LL2 (\Omega)}
      \right)
    \\
    & +{\frac12
      \norm{\bar u(t) - \tilde u (t)}^2_{\LL2 (\Omega)}
      + \frac1{2}
      \norm{\bar v (t) - \tilde v(t)}^2_{\LL2 (\Omega)}}
    \\
    & + \left(\frac12 + \delta_v + B
      \norm{\bar \rho_1 (t)}_{\LL\infty\left(\Omega\right)}\right)
      \norm{\bar v (t) -\tilde v (t)}^2_{\LL2\left(\Omega\right)}.
  \end{align*}

  We proceed similarly for $\bar \rho_1$ and $\tilde \rho_1$:
  \begin{displaymath}
    \pt \! \left(\bar \rho_1 - \tilde \rho_1\right)
    =
    \Delta \left(\bar \rho_1 - \tilde \rho_1\right)
    + \left(\alpha - \delta_1\right)\left(\bar \rho_1 - \tilde \rho_1\right)
    - \beta \bar v \left(\bar \rho_1 - \tilde \rho_1\right)
    - \beta \tilde \rho_1 \left(\bar v - \tilde v\right).
  \end{displaymath}
  For a.e. $t \in [0, T]$ and for all $w \in \LL2\left(\Omega\right)$,
  we have that
  \begin{align}
    \nonumber
    & \int_\Omega \left(\dot{\bar \rho}_1 - \dot{\tilde \rho}_1\right) w \dd{x}
      + \int_\Omega \nabla \left( \bar\rho_1  - \tilde \rho_1 \right)
      \cdot \nabla w \dd{x}
    \\ \label{eq:5}
    & - (\alpha - \delta_1)
      \int_\Omega\left( \bar\rho_1  - \tilde \rho_1 \right)  w \dd{x}
      + \beta \int_\Omega \bar v \left(\bar \rho_1 - \tilde \rho_1\right) w\dd{x}
    \\ \nonumber
    = \
    & - \beta \int_\Omega \tilde \rho_1 \left(\bar v - \tilde v \right) w \dd{x}.
  \end{align}
  Observe that the bilinear form is weakly coercive:
  \begin{equation}
    \label{eq:bilin-coercive-rho1}
    \begin{split}
      \int_\Omega \nabla \left( \bar\rho_1 - \tilde \rho_1 \right)
      \cdot \nabla \left( \bar\rho_1 - \tilde \rho_1 \right) \dd{x} +
      \int_\Omega\left(\delta_1 - \alpha + \beta \bar v \right)\left(
        \bar\rho_1 - \tilde \rho_1 \right)^2\dd{x}
      \\
      + \left(\frac12 + \delta_1 + \alpha + \beta \norm{\bar
          v}_{\LL\infty\left(\Omega\right)}\right) \norm{\bar \rho_1 -
        \tilde \rho_1}^2_{\LL2\left(\Omega\right)} \ge
      \frac12\norm{\bar \rho_1 - \tilde
        \rho_1}^2_{\HH1\left(\Omega\right)}.
    \end{split}
  \end{equation}
  Taking $w(x) = \left(\bar\rho_1(t,x) - \tilde \rho_1 (t,x) \right)$
  in~\eqref{eq:5} and exploiting~\eqref{eq:bilin-coercive-rho1} yields
  \begin{align*}
    & \frac12 \, \frac{\dd{}}{\dd t}
      \norm{\bar \rho_1(t) - \tilde \rho_1(t)}_{\LL2\left(\Omega\right)}^2
      +  \frac12 \norm{ \bar\rho_1 (t)  - \tilde \rho_1 (t) }_{\HH1 (\Omega)}^2
    \\
    \le \
    & -  \beta
      \int_\Omega \tilde \rho_1
      \left(\bar \rho_1 (t,x) - \tilde \rho_1 (t,x)\right)
      \left(\bar v (t,x) -  \tilde v (t,x)\right) \dd{x}
    \\
    & +  \left(\frac12 + \delta_1 + \beta \norm{\bar
      v (t)}_{\LL\infty\left(\Omega\right)}\right) \norm{\bar \rho_1 (t) -
      \tilde \rho_1 (t)}^2_{\LL2\left(\Omega\right)}
    \\
    \le \
    & \frac{\beta}{2} \norm{\tilde \rho_1(t)}_{\LL\infty (\Omega)}^2
      \norm{\bar \rho_1(t) - \tilde \rho_1(t)}_{\LL2\left(\Omega\right)}^2
      + \frac{\beta}{2}  \norm{\bar v(t) - \tilde v(t)}_{\LL2\left(\Omega\right)}^2
    \\
    & + \left(\frac12 + \delta_1 + \alpha+ \beta \norm{\bar
      v (t)}_{\LL\infty\left(\Omega\right)}\right) \norm{\bar \rho_1 (t) -
      \tilde \rho_1 (t)}^2_{\LL2\left(\Omega\right)}.
  \end{align*}

  Finally, the same arguments can be applied to $\bar \rho_2$ and
  $\tilde \rho_2$:
  \begin{displaymath}
    \pt \! \left(\bar \rho_2 - \tilde \rho_2\right)
    =
    \Delta \left(\bar \rho_2 - \tilde \rho_2\right)
    -\delta_2 \left(\bar \rho_2 - \tilde \rho_2\right)
    +\beta \bar \rho_1 \bar v - \beta \tilde \rho_1 \tilde v.
  \end{displaymath}
  For a.e. $t \in [0, T]$ and for every
  $w \in \LL2\left(\Omega\right)$ it holds
  \begin{equation}
    \label{eq:3}
    \begin{split}
      & \int_\Omega \left( \dot{\bar \rho}_2 - \dot{\tilde
          \rho}_2\right) w \dd{x} +\int_\Omega \nabla \left( \bar
        \rho_2 - \tilde \rho_2\right) \cdot \nabla w \dd{x} + \delta_2
      \int_\Omega \left( \bar \rho_2 - \tilde \rho_2\right) w \dd{x}
      \\
      = \ & \beta \int_\Omega \left( \bar \rho_1 \, \bar v - \tilde
        \rho_1 \, \tilde v\right) w \dd{x}.
    \end{split}
  \end{equation}
  The bilinear form appearing above is weakly coercive:
  \begin{equation}
    \label{eq:bilin-coercive-rho2}
    \begin{split}
      \int_\Omega \nabla \left( \bar \rho_2 - \tilde \rho_2\right)
      \cdot \nabla \left( \bar \rho_2 - \tilde \rho_2\right) \dd{x} +
      \delta_2 \int_\Omega \left( \bar \rho_2 - \tilde \rho_2\right)^2
      \dd{x}&
      \\
      + \left(\frac12 + \delta_2\right) \norm{\bar \rho_2 - \tilde
        \rho_2}^2_{\LL2 \left(\Omega\right)} & \ge \ \frac12
      \norm{\bar \rho_2 - \tilde \rho_2}^2_{\HH1 \left(\Omega\right)}.
    \end{split}
  \end{equation}
  Taking $w(x) = \left(\bar \rho_2 (t,x) - \tilde \rho_2 (t,x)\right)$
  in~\eqref{eq:3} and exploiting~\eqref{eq:bilin-coercive-rho2}, we
  get
  \begin{align*}
    & \frac12 \frac{\dd{}}{\dd t}
      \norm{\bar \rho_2(t) - \tilde \rho_2(t)}_{\LL2\left(\Omega\right)}^2
      + \frac12 \norm{\bar\rho_2 (t)  - \tilde \rho_2 (t)}_{\HH1 (\Omega)}^2
    \\
    \le \
    &  \beta \int_\Omega \bar \rho_1 (t,x)
      \left( \bar v (t,x) - \tilde v(t,x)\right)
      \left(\bar \rho_2(t,x) - \tilde \rho_2(t,x)\right) \dd{x}
    \\
    & + \beta \int_\Omega \tilde v (t,x)
      \left(\bar \rho_1(t,x) - \tilde \rho_1(t,x)\right)
      \left(\bar \rho_2(t,x) - \tilde \rho_2(t,x)\right) \dd{x}
    \\
    & + \left(\frac12 + \delta_2\right)
      \norm{\bar \rho_2 (t) - \tilde \rho_2 (t)}^2_{\LL2 \left(\Omega\right)}
    \\
    \le \
    & \frac{\beta}{2} \left( \norm{\bar \rho_1(t)}_{\LL\infty (\Omega)}^2
      \norm{\bar \rho_2(t) - \tilde \rho_2(t)}_{\LL2\left(\Omega\right)}^2
      +
      \norm{\bar v(t) - \tilde v(t)}_{\LL2\left(\Omega\right)}^2
      \right.
    \\
    & \left.+\norm{\tilde v(t)}_{\LL\infty (\Omega)}^2
      \norm{\bar \rho_1(t) - \tilde \rho_1(t)}_{\LL2\left(\Omega\right)}^2
      +
      \norm{\bar \rho_2(t) - \tilde \rho_2(t)}_{\LL2\left(\Omega\right)}^2
      \right)
    \\
    & + \left(\frac12 + \delta_2\right)
      \norm{\bar \rho_2 (t) - \tilde \rho_2 (t)}^2_{\LL2 \left(\Omega\right)}.
  \end{align*}

  Thus, collecting together the estimates obtained above, for a.e.
  $t \in [0, T]$, we get
  \begin{align*}
    &  \frac{\dd{}}{\dd t}
      \left(
      \norm{\bar \rho_1(t) - \tilde \rho_1(t)}_{\LL2\left(\Omega\right)}^2
      +
      \norm{\bar \rho_2(t) - \tilde \rho_2(t)}_{\LL2\left(\Omega\right)}^2
      +
      \norm{\bar v(t) - \tilde v(t)}_{\LL2\left(\Omega\right)}^2
      \right)
    \\
    & \quad
      +
      \norm{\bar \rho_1(t) - \tilde \rho_1(t)}_{\HH1\left(\Omega\right)}^2
      +
      \norm{\bar \rho_2(t) - \tilde \rho_2(t)}_{\HH1\left(\Omega\right)}^2
      +
      \norm{\bar v(t) - \tilde v(t)}_{\HH1\left(\Omega\right)}^2
    \\
    \le \
    &
      \left(B
      + \beta \norm{\tilde \rho_1(t)}_{\LL\infty (\Omega)}^2
      + \beta \norm{\tilde v(t)}_{\LL\infty (\Omega)}^2
      +1+2 \, \delta_1 + 2 \, \alpha + 2 \, \beta \norm{\bar v (t)}_{\LL\infty\left(\Omega\right)}
      \right)
      \\
      & \times
      \norm{\bar \rho_1(t) - \tilde \rho_1(t)}_{\LL2\left(\Omega\right)}^2
    \\
    &+
      \left( \beta + \beta \norm{\bar \rho_1(t)}_{\LL\infty (\Omega)}^2
      + b \, \delta_2 +1 + 2\,\delta_2 \right)
      \norm{\bar \rho_2(t) - \tilde \rho_2(t)}_{\LL2\left(\Omega\right)}^2
    \\
    & + \left( 2 \, \beta + b \, \delta_2
      + B \norm{\tilde v(t)}_{\LL\infty (\Omega)}^2
      +{2}
      +2\, \delta_v + B \norm{\bar \rho_1 (t)}_{\LL\infty\left(\Omega\right)}\right)
      \\
      & \times
      \norm{\bar v(t) - \tilde v(t)}_{\LL2\left(\Omega\right)}^2
    \\
    & +
      {\norm{\bar u(t) - \tilde u (t)}^2_{\LL2 (\Omega)}}.
  \end{align*}
  Set
  \begin{align*}
    C(t) = \
    \max\Big\{
    &
      B
      + \beta \left(\norm{\tilde \rho_1(t)}_{\LL\infty (\Omega)}^2
      + \norm{\tilde v(t)}_{\LL\infty (\Omega)}^2
      + 2 \, \norm{\bar v (t)}_{\LL\infty\left(\Omega\right)}\right)
    \\
    &
      +1+2 \, \delta_1 + 2\,
      \\
      & \alpha, \,\beta + \beta \norm{\bar \rho_1(t)}_{\LL\infty (\Omega)}^2
      + b \, \delta_2 +1 + 2\,\delta_2, \,
    \\
    & 2 \, \beta + b \, \delta_2
      + B \norm{\tilde v(t)}_{\LL\infty (\Omega)}^2
      +{2}+2\, \delta_v + B \norm{\bar \rho_1 (t)}_{\LL\infty\left(\Omega\right)}
      \Big\}.
  \end{align*}
  An application of Gronwall's inequality yields the following bound
  for the $\LL2$-norm:
  \begin{align*}
    &  \norm{\bar \rho_1(t) - \tilde \rho_1(t)}_{\LL2\left(\Omega\right)}^2
      +
      \norm{\bar \rho_2(t) - \tilde \rho_2(t)}_{\LL2\left(\Omega\right)}^2
      +
      \norm{\bar v(t) - \tilde v(t)}_{\LL2\left(\Omega\right)}^2
    \\
    \le \
    &
      \int_0^t {\norm{\bar u(s) - \tilde u (s)}^2_{\LL2\left(\Omega\right)}}
      \exp\left({\int_s^t C(\tau) \dd{\tau}}\right) \dd s.
  \end{align*}
  Moreover, we get
  \begin{align*}
    & \int_0^t \left(
      \norm{\bar \rho_1(\tau) - \tilde \rho_1(\tau)}_{\HH1\left(\Omega\right)}^2
      +
      \norm{\bar \rho_2(\tau) - \tilde \rho_2(\tau)}_{\HH1\left(\Omega\right)}^2
      +
      \norm{\bar v(\tau) - \tilde v(\tau)}_{\HH1\left(\Omega\right)}^2
      \right)
      \dd\tau
    \\
    \le \
    & \left(1+ \int_0^t C (\tau)
      \exp\left({\int_\tau^s C(s) \dd{s}}\right) \dd\tau\right)
      \int_0^t {\norm{\bar u(\tau) - \tilde u (\tau)}^2_{\LL2\left(\Omega\right)}} \dd\tau
    \\
    = \
    & \exp\left(\int_0^t C (\tau) \dd\tau\right)
      \int_0^t {\norm{\bar u(\tau) - \tilde u (\tau)}^2_{\LL2\left(\Omega\right)}} \dd\tau.
  \end{align*}
  Note that, using the estimates~\eqref{eq:general-growth-est-rho1},
  \eqref{eq:general-growth-est-rho2},
  and~\eqref{eq:general-growth-est-v} of~\Cref{thm:global-existence},
  the function $C(t)$ can be controlled by a constant depending on
  $T$, on the initial conditions, on $\abs{\Omega}$, and on the
  various constant appearing on system~\eqref{eq:PDE-model-3x3}.
  Therefore the estimates~\eqref{eq:dependence-from-u}
  and~\eqref{eq:dependence-from-u-H1} hold and the proof is finished.
\end{proof}

\section{Optimal control problem}
\label{sec:optimal-control}

In this section we consider optimal control problems for the
system~\eqref{eq:PDE-model-3x3}-\eqref{eq:initial-condition}-\eqref{eq:boundary-condition},
obtained through the minimization of a functional, which explicitly
depends on the control $u$ and consequently on the solution
to~\eqref{eq:PDE-model-3x3}.  To this aim, define the (continuous)
functions
\begin{equation*}
  \psi_1: \R^3 \to \R,
  \qquad
  \psi_2: \R^4 \to \R,
\end{equation*}
and the functional
{$J: \LL2\left([0, T] \times \Omega; [0, U]\right) \to \R$} as
\begin{equation}
  \label{eq:funct-J}
  \begin{split}
    J(u) & = \int_\Omega \psi_1\left(\rho_1(T, x), \rho_2(T, x), v(T,
      x)\right) \dd x
    \\
    & \quad + \int_0^T \int_\Omega \psi_2\left(\rho_1(t, x), \rho_2(t,
      x), v(t, x), {u(t,x)}\right) \dd x \dd t,
  \end{split}
\end{equation}
which we aim to minimize.




\begin{remark}
\label{rmk:functionals}
  The general form of the functional~\eqref{eq:funct-J} describes in a
  unified way several possible objective functionals, which are
  natural in application related to cancer therapies against glioma.

  For example, if the main objective is the maximal reduction of the
  volume of the cancer at a time $T$, one can consider the functional
  \begin{equation*}
    J(u) = \gamma_1 \int_\Omega \rho_1(T, x) \dd x
    + \gamma_2 \int_\Omega \rho_2(T, x) \dd x,
  \end{equation*}
  for suitable $\gamma_1, \gamma_2 \ge 0$. This is a special version
  of~\eqref{eq:funct-J}, obtained with the choice
  $\psi_1(\rho_1, \rho_2, v) = \gamma_1 \rho_1 + \gamma_2 \rho_2$ and
  $\psi_2(\rho_1, \rho_2, v, u) = 0$.

  Another similar example is derived when the objective is the minimal
  cancer size at a time $T$, obtained with the least dose of
  treatment, due to its side effects. In this case a meaningful
  functional is
  \begin{equation*}
    J(u) = \gamma_1 \int_\Omega \rho_1(T, x) \dd x
    + \gamma_2 \int_\Omega \rho_2(T, x) \dd x
    + {\int_0^T \int_\Omega u^p(t,x) \dd x \dd t},
  \end{equation*}
  where $\gamma_1, \gamma_2 \ge 0$ and $p \ge 1$.  This is a special
  version of~\eqref{eq:funct-J}, obtained with the choice
  $\psi_1(\rho_1, \rho_2, v) = \gamma_1 \rho_1 + \gamma_2 \rho_2$ and
  {$\psi_2(\rho_1, \rho_2, v, u) = u^p$.}

  A third example is related to a possible way to make the glioma a
  chronic disease. In this case, if
  $\bar \rho \in \LL2\left(\Omega\right)$ denotes the distribution of
  the cancer in a chronic situation, then one aims to minimize
  \begin{equation*}
    \begin{split}
      J(u) & = \int_\Omega \left(\rho_1(T, x) - \bar \rho(x)\right)^2
      \dd x + \int_0^T \int_\Omega \left(\rho_1(t, x) - \bar
        \rho(x)\right)^2 \dd x \dd t
      \\
      & \quad + {\int_0^T\int_\Omega u^p(t,x)\dd x \dd t}.
    \end{split}
  \end{equation*}
  This is~\eqref{eq:funct-J} with
  $\psi_1(\rho_1, \rho_2, v) = (\rho_1 - \bar \rho)^2$ and
  $\psi_2(\rho_1, \rho_2, v, u) = (\rho_1 - \bar \rho)^2 +
  {u^p}$.
\end{remark}

Existence of optimal controls is guaranteed by the next result, whose
proof is based on the direct method of the calculus of variation; see
for example~\cite{MR2361288, MR1962933} and the references therein.
\begin{theorem}
  \label{thm:existence-minimum-2}
  Let $\alpha$, $\beta$, $\delta_1$, $\delta_2$, $\delta_v$, $b$,
  $B$, and $U$ be fixed positive constants.  Assume that
  $\psi_1$ is continuous, convex and $\psi_1(z) \ge 0$ for every $z
  \in \R^3$. Suppose moreover that
  \begin{displaymath}
    \psi_2(\rho_1,\rho_2,v,u) =
    \psi_{2,1} (\rho_1,\rho_2,v) + \psi_{2,2}(\rho_1,\rho_2,v) u^p,
  \end{displaymath}
  where $p \ge 1$ and $\psi_{2,1}, \psi_{2,2}: \R^3 \to
  \R$ are continuous and positive functions.  Let $\mathcal U \ne
  \emptyset$ be a closed (with respect to the strong topology) and
  convex subset of {$\LL2\left((0, T) \times \Omega; [0,
      U]\right)$}.

  Then there exists $\bar u \in \mathcal U$ such that
  \begin{equation}
    \label{eq:optimal-condition}
    J\left(\bar u\right) = \min_{u \in \mathcal U} J(u).
  \end{equation}
\end{theorem}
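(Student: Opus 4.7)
The plan is to apply the direct method of the calculus of variations. Since $\psi_1 \geq 0$ and $\psi_{2,1}, \psi_{2,2} > 0$, the functional $J$ is bounded below on $\mathcal{U}$, so a minimizing sequence $\{u_n\} \subset \mathcal{U}$ with $J(u_n) \to \inf_{\mathcal{U}} J$ exists. The uniform pointwise bound $0 \le u_n \le U$ makes $\{u_n\}$ bounded in $\LL2((0,T)\times\Omega)$ and weakly-$*$ precompact in $\LL\infty$, so up to a subsequence $u_n \rightharpoonup \bar u$ weakly in $\LL2$ (and in every $\LL{p}$, $1 \le p < \infty$, thanks to the $\LL\infty$ bound). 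Since $\mathcal{U}$ is strongly closed and convex, Mazur's lemma guarantees it is weakly closed, whence $\bar u \in \mathcal{U}$.

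The next step is to extract compactness of the corresponding states $(\rho_1^n, \rho_2^n, v^n)$. By \Cref{thm:global-existence} they are uniformly bounded in $\LL\infty((0,T)\times\Omega)$, with constants depending only on $U$, on the initial data, and on the parameters of the system. Testing each equation against its own unknown yields uniform bounds in $\LL2(0,T;\HH1(\Omega))$, and the weak formulation of \Cref{def:sol} gives uniform bounds on the time derivatives in $\LL2(0,T;\HH1(\Omega)^*)$. The Aubin--Lions compactness lemma then provides, along a further subsequence, $\rho_i^n \to \bar\rho_i$ and $v^n \to \bar v$ strongly in $\LL2((0,T)\times\Omega)$ and pointwise a.e., with weak convergence in $\LL2(0,T;\HH1(\Omega))$. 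Passing to the limit in the weak formulation, the strong $\LL2$ convergence combined with the uniform $\LL\infty$ bounds handles the nonlinear products $\rho_1^n v^n$, so that $(\bar\rho_1, \bar\rho_2, \bar v)$ is identified as the solution associated with $\bar u$. To obtain convergence at the terminal time I would combine the $\LL2(0,T;\HH1)$ bound and the $\LL2(0,T;\HH1^*)$ bound on the time derivatives and invoke Ascoli--Arzel\`a in $\CC0([0,T];\LL2(\Omega))$, giving $\rho_i^n(T,\cdot) \to \bar\rho_i(T,\cdot)$ and $v^n(T,\cdot) \to \bar v(T,\cdot)$ strongly in $\LL2(\Omega)$.

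It remains to establish the lower semicontinuity $J(\bar u) \le \liminf_n J(u_n)$. The terminal term $\int_\Omega \psi_1(\rho_1^n(T), \rho_2^n(T), v^n(T)) \dd{x}$ converges by continuity of $\psi_1$, the uniform $\LL\infty$ bounds, and dominated convergence applied to the strong $\LL2$ convergence at $t=T$; the same argument handles the space-time integral involving $\psi_{2,1}$. The delicate contribution is
\[
  \int_0^T\!\!\int_\Omega \psi_{2,2}(\rho_1^n, \rho_2^n, v^n)\, u_n^p \dd{x}\dd{t}.
\]
Setting $\phi_n = \psi_{2,2}(\rho_1^n, \rho_2^n, v^n)$ and $\bar\phi = \psi_{2,2}(\bar\rho_1, \bar\rho_2, \bar v)$, continuity and the uniform bounds imply $\phi_n \to \bar\phi$ strongly in $\LL{q}((0,T)\times\Omega)$ for every finite $q$, while remaining uniformly bounded. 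Writing the splitting
\[
  \int_0^T\!\!\int_\Omega \phi_n u_n^p \dd{x}\dd{t}
  = \int_0^T\!\!\int_\Omega (\phi_n - \bar\phi) u_n^p \dd{x}\dd{t}
  + \int_0^T\!\!\int_\Omega \bar\phi\, u_n^p \dd{x}\dd{t},
\]
the first summand tends to zero because $u_n^p$ is uniformly bounded in $\LL\infty$ and $\phi_n - \bar\phi \to 0$ in $\LL1$, while the functional $u \mapsto \int_0^T\!\!\int_\Omega \bar\phi\, u^p \dd{x}\dd{t}$ is convex and strongly continuous on $\LL{p}$ (since $\bar\phi \geq 0$ and $t \mapsto t^p$ is convex on $[0,\infty)$), hence weakly lower semicontinuous; combined with $u_n \rightharpoonup \bar u$ in $\LL{p}$ this gives the required inequality.

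The main obstacle is precisely this last term: one cannot directly invoke convexity because the coefficient $\phi_n$ in front of $u_n^p$ itself depends on $n$. The splitting above is the key technical device, decoupling the strong convergence of $\phi_n$ (furnished by Aubin--Lions on the states) from the weak convergence of $u_n^p$ (controlled by convexity of $t \mapsto t^p$ and positivity of $\bar\phi$). The secondary technical point, also flagged in the introduction, is getting strong $\LL2(\Omega)$ convergence at $t=T$ via the Ascoli--Arzel\`a theorem in $\CC0([0,T];\LL2(\Omega))$.
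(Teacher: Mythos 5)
Your overall strategy coincides with the paper's (direct method, weak compactness of $u_n$, compactness of the states, and the same splitting
\begin{equation*}
  \int_0^T\!\!\int_\Omega \phi_n u_n^p
  = \int_0^T\!\!\int_\Omega (\phi_n - \bar\phi)\, u_n^p
  + \int_0^T\!\!\int_\Omega \bar\phi\, u_n^p
\end{equation*}
for the delicate term, which you correctly identify as the key device). Your route to compactness of the states via Aubin--Lions is a legitimate and arguably cleaner alternative to the paper's combination of interior H\"older estimates, Ascoli--Arzel\`a on compact subsets of $(0,T)\times\Omega$, and weak continuity of the linear solution operator of \Cref{cor:solution-operator}; it suffices to identify the limit state and to treat the $\psi_{2,1}$ and $\psi_{2,2}$ terms.

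There is, however, one genuine gap: the claimed \emph{strong} convergence $\rho_i^n(T,\cdot)\to\bar\rho_i(T,\cdot)$, $v^n(T,\cdot)\to\bar v(T,\cdot)$ in $\LL2(\Omega)$. The bounds you invoke --- uniform bounds in $\LL2(0,T;\HH1(\Omega))$ for the states and in $\LL2(0,T;\HH1(\Omega)^*)$ for their time derivatives --- do not yield relative compactness in $\CC0([0,T];\LL2(\Omega))$. The Aubin--Lions--Simon lemma gives compactness in $\CC0([0,T];B)$ only when the sequence is bounded in $\LL\infty(0,T;X)$ with $X\hookrightarrow\hookrightarrow B$; here the states are bounded in $\LL\infty(0,T;\LL\infty(\Omega))\subset\LL\infty(0,T;\LL2(\Omega))$, and $\LL2(\Omega)$ does not embed compactly into itself, while the $\HH1(\Omega)$ bound is only square-integrable in time and therefore gives no control at the single time $t=T$. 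What one does get is compactness in $\CC0([0,T];\HH1(\Omega)^*)$, which combined with the uniform $\LL2(\Omega)$ bound yields only \emph{weak} convergence of the traces in $\LL2(\Omega)$ --- and this is precisely what the paper proves (via a duality argument on $\CC0([0,T];\LL2(\Omega))$). The symptom of the gap is that your argument never uses the convexity of $\psi_1$, whereas the theorem assumes it for exactly this reason: with only weak convergence of the traces, the terminal term is handled by the weak lower semicontinuity of $(\rho_1,\rho_2,v)\mapsto\int_\Omega\psi_1(\rho_1,\rho_2,v)\dd x$, which follows from convexity and nonnegativity of $\psi_1$, not from continuity alone. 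The fix is immediate --- replace your strong trace convergence by weak convergence and invoke convexity --- but as written the step fails.
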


\begin{proof}
  Consider a minimizing sequence $u_n$ for the functional $J$, i.e.  a
  sequence $u_n \in \mathcal U$ such that
  \begin{equation*}
    \lim_{n \to +\infty} J\left(u_n\right) = \inf_{u \in \mathcal U} J(u).
  \end{equation*}
  This is possible, since $J(u) \ge 0$ for every $u \in \mathcal U$ by
  assumptions on $\psi_1$, $\psi_{2,1}$, $\psi_{2,2}$, and since
  $J(0) < +\infty$, which is a consequence of the estimates
  in~\Cref{thm:global-existence}.  We clearly have that
  {\begin{equation*} \norm{u_n}_{\LL2\left((0, T) \times \Omega
        \right)}^2 = \int_0^T \int_\Omega\abs{u_n(t,x)}^2 \dd x \dd t
      \le U^2 \modulo{\Omega} T,
    \end{equation*}
    where $\modulo{\Omega}$ denotes the Lebesgue measure of $\Omega$,}
  so that there exists $\bar u \in \mathcal U$ (here $\mathcal U$
  is also closed in the
  weak topology, since it is convex) and a subsequence $u_{n_h}$ such
  that {$u_{n_h} \wc \bar u$ in
    $\LL2\left((0, T)\times \Omega; [0, U]\right)$}.  Without loss of
  generality, we assume that the whole sequence $u_n$ weakly converges
  in {$\LL2\left((0, T) \times \Omega; [0, U]\right)$} to
  $\bar u$.  Note, moreover, that there is weak convergence to the
  same $\bar u$ in every
  {$\LL{p}((0,T) \times \Omega; [0, U])$}, $p \ge 1$.

  For every $n \in \N$, denote with
  $\left(\rho_1^n, \rho_2^n, v^n\right)$ the solution
  to~\eqref{eq:PDE-model-3x3} corresponding to the control $u_n$, in
  the sense of~\Cref{def:sol}, which exists by~\Cref{thm:global-existence}.  Moreover define
  $\left(\bar \rho_1, \bar \rho_2, \bar v\right)$ the solution
  to~\eqref{eq:PDE-model-3x3} corresponding to the control $\bar u$.

  Define, for every $n \in \N$, $S_n = \rho_1^n v^n$.
  By~\eqref{eq:general-growth-est-rho1}
  and~\eqref{eq:general-growth-est-v}, we deduce that
  \begin{align*}
    \abs{S_n(t, x)}
    & \le \norm{\rho_{1,o}}_{\LL\infty}
      \norm{v_{o}}_{\LL\infty}
      e^{\left|\alpha - \delta_1\right| T}
    \\
    & \quad
      + \norm{\rho_{1,o}}_{\LL\infty}
      \left( b \delta_2 \left(\norm{\rho_{1,o}}_{\LL\infty}
      + \norm{\rho_{2,o}}_{\LL\infty}\right)
      e^{\alpha T} + U \right) T
  \end{align*}
  for a.e. $(t, x) \in [0, T] \times \Omega$. Therefore, without loss
  of generality, there exists
  $\bar S \in \LL2\left((0, T) \times \Omega\right)$ so that $S_n$
  weakly converges to $\bar S$ in
  $\LL2\left((0, T) \times \Omega\right)$.

  Note that, for every $n \in \N$, the triple
  $\left(\rho_1^n, \rho_2^n, v^n\right)$ is a solution to the linear
  system
  \begin{equation}
    \label{eq:PDE-model-linear}
    \left\{
      \begin{array}{l}
        \pt \rho_1 = \Delta \rho_1 + \left(\alpha - \delta_1\right) \rho_1
        - \beta S_n
        \\
        \pt \rho_2 = \Delta \rho_2 + \beta S_n - \delta_2 \rho_2
        \\
        \pt v = \Delta v + b \delta_2 \rho_2 - B S_n - \delta_v v + u_n
      \end{array}
    \right.
  \end{equation}
    in the sense of~\Cref{def:sol}.
    We apply~\Cref{cor:solution-operator} to each linear equation
    of~\eqref{eq:PDE-model-linear}. Since the operator defined
    in~\Cref{cor:solution-operator} is also continuous with respect to
    the weak topology of both domain and codomain,
    see~\cite[Theorem~3.10]{MR2759829}, then there exist
    $\tilde \rho_1$, $\tilde \rho_2$, and $\tilde v$ in
    $\HH1\left(0, T; \HH1\left(\Omega\right),
      \HH1\left(\Omega\right)^*\right)$ such that
  \begin{equation*}
    \rho_1^n \wc \tilde \rho_1,
    \qquad
    \rho_2^n \wc \tilde \rho_2,
    \quad \textrm{and}\quad
    v^n \wc \tilde v
  \end{equation*}
  weakly in
  $\HH1\left(0, T; \HH1\left(\Omega\right),
    \HH1\left(\Omega\right)^*\right)$ and the triple
  $\left(\tilde \rho_1, \tilde \rho_2, \tilde v\right)$ satisfies the
  linear system
  \begin{equation}
    \label{eq:PDE-model-linear-2}
    \left\{
      \begin{array}{l}
        \pt \tilde \rho_1 =
        \Delta \tilde \rho_1 + \left(\alpha - \delta_1\right) \tilde \rho_1
        - \beta \bar S
        \\
        \pt \tilde \rho_2 = \Delta \tilde \rho_2 + \beta \bar S
        - \delta_2 \tilde \rho_2
        \\
        \pt \tilde v = \Delta \tilde v + b \delta_2 \tilde \rho_2
        - B \bar S - \delta_v \tilde v + \bar u.
      \end{array}
    \right.
  \end{equation}
  Moreover, since
  $\HH1\left(0, T; \HH1\left(\Omega\right),
    \HH1\left(\Omega\right)^*\right)$ is continuously embedded in the space
  $\CC0\left([0, T]; \LL2\left(\Omega\right)\right)$
  (see~\cite[Theorem 7.104]{Salsa}), we also deduce that
  \begin{equation}
    \label{eq:weak-conv-trace}
    \rho_1^n(T) \wc \tilde \rho_1(T),
    \qquad
    \rho_2^n(T) \wc \tilde \rho_2(T),
    \quad \textrm{and}\quad
    v^n(T) \wc \tilde v(T)
  \end{equation}
  weakly in $\LL2\left(\Omega\right)$.  This is a consequence that the
  dual of $\CC0\left([0, T]; \LL2\left(\Omega\right)\right)$ can be
  identified with integral operators from $\CC0\left([0, T];\R\right)$
  to the dual space of $\LL2\left(\Omega\right)$ (see~\cite[sections
  3.2 and 3.5]{MR1888309}), which can be described through vector
  measures on $[0, T]$ over the dual of $\LL2\left(\Omega\right)$;
  see~\cite[Proposition 5.28]{MR1888309}.

  By~\cite[Theorem~10.1]{MR0241822}, for every $n \in \N$, the
  solution $\left(\rho_1^n, \rho_2^n, v^n\right)$ is Hölder continuous
  in each subset compactly embedded in
  $\left(0, T\right) \times \Omega$, with exponent not depending on
  $n$.  Therefore, by Ascoli-Arzelà Theorem, there exist continuous
  functions $\hat \rho_1$, $\hat \rho_2$, and $\hat v$ such that,
  possibly extracting a subsequence,
  \begin{equation*}
    \rho_1^n \to \hat \rho_1,
    \qquad
    \rho_2^n \to \hat \rho_2,
    \qquad
    v^n \to \hat v,
  \end{equation*}
  as $n\to +\infty$, and the convergence is uniform. Hence
  $\tilde \rho_1 = \hat \rho_1$, $\tilde \rho_2 = \hat \rho_2$,
  $\tilde v = \hat v$ and so
  \begin{equation*}
    S_n = \rho_1^n v^n \longrightarrow \hat \rho_1 \hat v
  \end{equation*}
  uniformly and in $\LL2\left(\left(0, T\right) \times \Omega\right)$.

  Therefore, the triple
  $\left(\hat \rho_1, \hat \rho_2, \hat v \right)$
  solves~\eqref{eq:PDE-model-linear-2} with
  $\bar S = \hat \rho_1 \hat v$ and control $\bar u$. Since
  by~\Cref{thm:global-existence} the solution to such problem is
  unique, it holds
  $\left(\hat \rho_1, \hat \rho_2, \hat v\right) = \left(\bar \rho_1,
    \bar \rho_2, \bar v\right)$.

  We now show that the control $\bar u$ is indeed optimal. Consider
  the three terms defining the functional $J$ separately.  The
  functional
  \begin{equation*}
    \begin{array}{ccc}
      \LL2\left(\Omega\right) \times
      \LL2\left(\Omega\right) \times \LL2\left(\Omega\right)
      &
        \longrightarrow
      &
        \R
      \\
      \left(\rho_1, \rho_2, v\right)
      & \longmapsto
      & \displaystyle
        \int_\Omega \psi_1\left(\rho_1(x), \rho_2(x), v(x)\right) \dd x
    \end{array}
  \end{equation*}
  is sequential lower semicontinuous with respect to the strong
  topology.  Since $\psi_1$ is a convex function, then it is also
  sequential lower semicontinuous with respect to the weak topology.
  Therefore, since~\eqref{eq:weak-conv-trace}, we deduce that
  \begin{equation*}
    \begin{split}
      & \liminf_{n \to +\infty} \int_\Omega \psi_1\left(\rho_1^n(T,
        x), \rho_2^n(T, x), v^n(T, x)\right) \dd x
      \\
      \ge & \int_\Omega \psi_1\left(\bar \rho_1(T, x), \bar \rho_2(T,
        x), \bar v(T, x)\right) \dd x.
    \end{split}
  \end{equation*}

  Moreover, since $\left(\rho_1^n, \rho_2^n, v^n\right)$ converges to
  $\left(\bar \rho_1, \bar \rho_2, \bar v\right)$ for a.e.
  $(t, x) \in (0, T) \times \Omega$, the
  estimates~\eqref{eq:general-growth-est-rho1}--\eqref{eq:general-growth-est-v}
  hold, and $\psi_{2,1}$ and $\psi_{2,2}$ are continuous functions,
  then the Dominated Convergence Theorem implies that
  \begin{equation*}
    \begin{split}
      & \lim_{n \to +\infty} \int_0^T \int_\Omega
      \psi_{2,1}\left(\rho_1^n(t, x), \rho_2^n(t, x), v^n(t, x)\right)
      \dd x\, \dd t
      \\
      = & \int_0^T \int_\Omega \psi_{2,1}\left(\bar \rho_1(t, x), \bar
        \rho_2(t, x), \bar v(t, x)\right) \dd x\, \dd t
    \end{split}
  \end{equation*}
  and also that
  \begin{equation}
    \label{eq:convergence-psi-22-LP}
    \lim_{n \to + \infty}\psi_{2,2}\left(\rho_1^n, \rho_2^n, v^n\right)
    = \psi_{2,2}\left(\bar \rho_1, \bar \rho_2, \bar v\right)
  \end{equation}
  in $\LL{p}\left(\Omega\right)$ for every $p \in [1, +\infty)$.

  Finally, by~\eqref{eq:convergence-psi-22-LP} and the fact that
  {$\abs{u_n^p(t,x)} \le U$ for all
    $(t,x) \in [0, T] \times \Omega$}, we deduce that
  \begin{align*}
    & \quad
      \liminf_{n \to +\infty} \int_0^T \int_{\Omega}
      \psi_{2,2}\left(\rho_1^n(t, x), \rho_2^n(t, x), v^n(t, x)\right)
      {u^p_n(t,x)}
      \dd x\, \dd t
    \\
    & = \liminf_{n \to +\infty} \int_0^T \int_{\Omega}
      \psi_{2,2}\left(\bar \rho_1(t, x),
      \bar \rho_2(t, x),
      \bar v(t, x)\right) {u^p_n(t,x)} \dd x\, \dd t
    \\
    & \quad + \lim_{n \to +\infty} \int_0^T \int_{\Omega}
      \big[\psi_{2,2}\left(\rho_1^n(t, x), \rho_2^n(t, x), v^n(t, x)\right)
    \\
    & \qquad\qquad\qquad\qquad
      - \psi_{2,2}\left(\bar \rho_1(t, x),
      \bar \rho_2(t, x),
      \bar v(t, x)\right) \big]  {u^p_n(t,x)} \dd x\, \dd t
    \\
    & = \liminf_{n \to +\infty} \int_0^T \int_{\Omega}
      \psi_{2,2}\left(\bar \rho_1(t, x),
      \bar \rho_2(t, x),
      \bar v(t, x)\right)  {u^p_n(t,x)} \dd x\, \dd t.
  \end{align*}
  Since the functional
  \begin{equation*}
    \begin{array}{rcc}
      {\LL{p}\left([0, T] \times \Omega; [0, U]\right)}
      &
        \longrightarrow
      &
        \R
      \\
      u
      & \longmapsto
      & \displaystyle
        \!\!\int_0^T \!\!\!\int_{\Omega}
        \psi_{2,2}\left(\bar \rho_1(t, x),
        \bar \rho_2(t, x),
        \bar v(t, x)\right)  {u^p(t,x)} \dd x\, \dd t
    \end{array}
  \end{equation*}
  is convex and sequentially lower semicontinuous with respect to the
  strong topology, then it is also sequentially lower semicontinuous
  with respect to the weak topology and so
  \begin{align*}
    & \quad
      \liminf_{n \to +\infty} \int_0^T \int_{\Omega}
      \psi_{2,2}\left(\rho_1^n(t, x), \rho_2^n(t, x), v^n(t, x)\right)
      {u^p_n(t,x)}
      \dd x\, \dd t
    \\
    & \ge \liminf_{n \to +\infty} \int_0^T \int_{\Omega}
      \psi_{2,2}\left(\bar \rho_1(t, x),
      \bar \rho_2(t, x),
      \bar v(t, x)\right) {\bar u^p(t,x)} \dd x\, \dd t.
  \end{align*}
  This permits to conclude that
  \begin{equation*}
    \liminf_{n \to +\infty} J(u_n) \ge J(\bar u),
  \end{equation*}
  proving that $\bar u$ is an optimal control,
  i.e.~\eqref{eq:optimal-condition} holds. This concludes the proof.
\end{proof}

\begin{remark}
  \Cref{thm:existence-minimum-2} still holds under the following more
  general assumption on the function $\psi_{2}$ appearing in
  $J(u)$: 
  \begin{displaymath}
    \begin{array}{ccccc}
      \psi_2&:& \R^4& \to &\R
      \\
            && (\rho_1, \rho_2,v,u) & \mapsto & \psi_2 (\rho_1, \rho_2,v,u)
    \end{array}
  \end{displaymath}
  is Lipschitz continuous in all variables, positive and convex in
  $u$.  Indeed, since $(\rho_1^n, \rho_2^n, v^n)$ converges to
  $(\bar \rho_1, \bar \rho_2, \bar v)$, and the function $\psi_2$ is
  Lipschitz continuous, it holds
  \begin{align*}
    & \lim_{n \to +\infty} \int_0^T \int_{\Omega}
      \big[\psi_{2}\left(\rho_1^n(t, x), \rho_2^n(t, x), v^n(t, x), {u_n(t,x)}\right)
    \\
    & \qquad\qquad\qquad\qquad
      - \psi_{2,2}\left(\bar \rho_1(t, x),
      \bar \rho_2(t, x),
      \bar v(t, x), {u_n(t,x)}\right) \big]  \dd x\, \dd t =0.
  \end{align*}
  Therefore
  \begin{align*}
    & \quad
      \liminf_{n \to +\infty} \int_0^T \int_{\Omega}
      \psi_{2,2}\left(\rho_1^n(t, x), \rho_2^n(t, x), v^n(t, x),
      {u_n(t,x)} \right)
      \dd x\, \dd t
    \\
    & = \liminf_{n \to +\infty} \int_0^T \int_{\Omega}
      \psi_{2,2}\left(\bar \rho_1(t, x),
      \bar \rho_2(t, x),
      \bar v(t, x), {u_n(t,x)} \right)  \dd x\, \dd t.
  \end{align*}
  The convexity of $\psi_2$ with respect to $u$ ensures,
  by~\cite[Theorem~2.12]{zbMATH05703572} the weakly lower
  semicontinuity of the functional
  \begin{equation*}
    \begin{array}{rcc}
      {\LL{p}\left([0, T] \times \Omega; [0, U]\right)}
      &
        \longrightarrow
      &
        \R
      \\
      u
      & \longmapsto
      & \displaystyle
        \!\!\!\int_0^T \!\!\!\int_{\Omega}
        \psi_{2,2}\left(\bar \rho_1(t, x),
        \bar \rho_2(t, x),
        \bar v(t, x), {u(t,x)} \right) \dd x\, \dd t,
    \end{array}
  \end{equation*}
  so that
  \begin{align*}
    & \quad
      \liminf_{n \to +\infty} \int_0^T \int_{\Omega}
      \psi_{2,2}\left(\rho_1^n(t, x), \rho_2^n(t, x), v^n(t, x),
      {u_n(t,x)}\right)
      \dd x\, \dd t
    \\
    & \ge \liminf_{n \to +\infty} \int_0^T \int_{\Omega}
      \psi_{2,2}\left(\bar \rho_1(t, x),
      \bar \rho_2(t, x),
      \bar v(t, x), {\bar u(t,x)} \right)  \dd x\, \dd t,
  \end{align*}
  allowing to conclude that
  \begin{displaymath}
    \liminf_{n \to + \infty} J(u_n) \ge J(\bar u).
  \end{displaymath}
\end{remark}

\section{Necessary optimality conditions}
\label{sec:necessary-conditions}

In this section, given the initial data
$\rho_{1,o}, \rho_{2,o}, v_{o} \in \LL\infty\left(\Omega; \R_+\right)$,
we aim to prove
necessary conditions for optimal controls, i.e.~for controls
satisfying~\eqref{eq:optimal-condition} of
\Cref{thm:existence-minimum-2}. First we prove the
differentiability of the control-to-state map and then
we deduce optimality conditions using the adjoint system.

\subsection{Differentiability of the control-to-state map}

We follow the line
of~\cite{ContiGattiMiranville}, see also~\cite{Colli}.
Consider the control-to-state map $G$, defined as
\begin{equation}
  \label{eq:control-to-state}
  \begin{array}{llccc}
    G
    & : &{\LL2([0,T] \times \Omega ;[0,U])} & \to & \CC0 ([0,T];
                                                           \left(\LL2 (\Omega)\right)^3)
    \\
    && u & \mapsto & (\rho_1, \rho_2,v),
  \end{array}
\end{equation}
where the triple $(\rho_1, \rho_2,v)$ is the unique solution
to~\eqref{eq:PDE-model-3x3}-\eqref{eq:initial-condition}-\eqref{eq:boundary-condition},
corresponding to the control $u$ with initial data
$\rho_{1,o}, \rho_{2,o}, v_{o}$.

First, observe that, by~\Cref{thm:dependence-from-u-H1}, the map $G$
is Lipschitz continuous. Let now
$u^*\in {\LL2([0,T] \times \Omega ;[0,U])}$ be fixed and denote
the corresponding state by $G(u^*) = (\rho_1^*, \rho_2^*,v^*)$. Given
$\bar u \in {\LL2([0,T] \times \Omega ;[0,U])}$, we introduce
the linearized system at $ (\rho_1^*, \rho_2^*,v^*)$:
\begin{equation}
  \label{eq:L}
  \left\{
    \begin{array}{l}
      \pt X = \Delta X + \partial_{\rho_1}F_1(\rho_1^*, v^*) X + \partial_v F_1(\rho_1^*,v^*)Z
      \\
      \pt Y = \Delta Y + \partial_{\rho_1}F_2(\rho_1^*,\rho_2^*, v^*) X + \partial_{\rho_2}F_2(\rho_1^*,\rho_2^*, v^*) Y
      \\
      \qquad +  \partial_v F_2(\rho_1^*,\rho_2^*,v^*)Z
      \\
      \pt Z = \Delta Z + \partial_{\rho_1}F_3(\rho_1^*,\rho_2^*, v^*) X + \partial_{\rho_2}F_3(\rho_1^*,\rho_2^*, v^*) Y
      \\
      \qquad +  \partial_v F_3(\rho_1^*,\rho_2^*,v^*)Z+ \bar{u} - u^*,
    \end{array}
  \right.
\end{equation}
coupled with zero initial conditions and homogeneous Neumann boundary
conditions
\begin{equation}
  \label{eq:0-init-cond}
  \left\{
    \begin{array}{l}
      X(0, x) = 0
      \\
      Y(0, x) = 0
      \\
      Z(0,x) = 0
    \end{array}
  \right.
  \qquad
  \left\{
    \begin{array}{l}
      \partial_\nu X(t, \xi) = 0
      \\
      \partial_\nu Y(t, \xi) = 0
      \\
      \partial_\nu Z(t, \xi) = 0.
    \end{array}
  \right.
\end{equation}
Above, we used the following notation:
\begin{align}
  \nonumber
  F_1 (\rho_1, v) = \
  & (\alpha-\delta_1) \rho_1 - \beta \rho_1 v,
  \\
  \label{eq:F123}
  F_2 (\rho_1, \rho_2, v) =\
  & \beta \rho_1 v -\delta_2\rho_2,
  \\
  \nonumber
  F_3 (\rho_1, \rho_2, v) =\
  & b \delta_2 \rho_2 -B \rho_1 v - \delta_v v.
\end{align}

\begin{lemma}
  \label{lem:sol_lin}
  The system~\eqref{eq:L}--\eqref{eq:0-init-cond} has a unique strong solution $(X,Y,Z) \in \mathcal{X}^3$, satisfying
  \begin{equation}
    \label{eq:7}
    \norm{X}^2_{\mathcal{X}} + \norm{Y}^2_{\mathcal{X}} + \norm{Z}^2_{\mathcal{X}}
    \le
    C \norm{\bar u}^2_{\LL2 ((0,T)\times\Omega)},
  \end{equation}
  where $\mathcal{X}=\CC0 ([0,T]; \HH1 (\Omega))$.
\end{lemma}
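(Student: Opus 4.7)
The plan is to treat \eqref{eq:L}--\eqref{eq:0-init-cond} as a linear parabolic system with bounded measurable coefficients and an $\LL2$ source. By \eqref{eq:F123}, the coefficients $\partial_{\rho_i}F_j(\rho_1^*,\rho_2^*,v^*)$ and $\partial_v F_j(\rho_1^*,\rho_2^*,v^*)$ are affine combinations of $\rho_1^*,\rho_2^*,v^*$ and the positive parameters, hence belong to $\LL\infty((0,T)\times\Omega)$ thanks to \eqref{eq:general-growth-est-rho1}--\eqref{eq:general-growth-est-v} of \Cref{thm:global-existence}. With vanishing initial data, homogeneous Neumann boundary conditions, and the $\LL2$ source $\bar u - u^*$ in the third equation, the system is exactly of the scalar linear type covered by \Cref{thm:parBase}, applied componentwise.

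For existence and uniqueness in $\mathcal{X}^3$, I would adapt the Banach fixed point scheme of \Cref{thm:local_wellposedness}. Define a map $\mathcal{T}_L$ that, given $(\hat X,\hat Y,\hat Z)$, returns the solution of the decoupled linear parabolic system in which the off-diagonal couplings $\partial_v F_1\,\hat Z$, $\partial_{\rho_1}F_2\,\hat X + \partial_v F_2\,\hat Z$, and $\partial_{\rho_1}F_3\,\hat X + \partial_{\rho_2}F_3\,\hat Y$ are treated as known right-hand sides. Each component equation is solved by \Cref{thm:parBase}, and since the zero initial datum lies in $\HH1(\Omega)$, \Cref{prop:regularity} places the solution in $\LL2(0,T;\HH2(\Omega))\cap\CC0([0,T];\HH1(\Omega))\subseteq\mathcal{X}$. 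Contraction on a short interval $[0,T_1]$ follows by the computations already carried out for \Cref{thm:local_wellposedness}, with constants controlled by the $\LL\infty$ norms of the coefficients. As the system is \emph{linear}, the local fixed point extends to the whole interval $[0,T]$ by iteration, with no smallness restriction.

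The estimate \eqref{eq:7} is then obtained by an energy argument. Testing the three equations against $X$, $Y$, $Z$ in $\HH1(\Omega)$ and integrating produces the dissipation terms $\norma{\nabla X}^2_{\LL2}$, $\norma{\nabla Y}^2_{\LL2}$, $\norma{\nabla Z}^2_{\LL2}$; the off-diagonal couplings are absorbed via Young's inequality with $\LL\infty$ weights depending on the state, and the source is bounded by $2\int_\Omega(\bar u - u^*)Z\dd x \le \norma{\bar u - u^*}^2_{\LL2(\Omega)} + \norma{Z}^2_{\LL2(\Omega)}$. Summing and invoking Gronwall's inequality gives the $\CC0([0,T];\LL2(\Omega))\cap\LL2(0,T;\HH1(\Omega))$ bound. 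To upgrade to $\mathcal{X}=\CC0([0,T];\HH1(\Omega))$, I would run the analogous energy estimate after testing with $-\Delta X$, $-\Delta Y$, $-\Delta Z$ (legitimate thanks to the $\HH2$-regularity above and the homogeneous Neumann boundary condition), obtaining a differential inequality for $\norma{\nabla X}^2+\norma{\nabla Y}^2+\norma{\nabla Z}^2$ which again closes by Gronwall, the initial gradients being zero.

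The main obstacle is that the coupling terms carry no favourable sign (for instance $\partial_v F_1=-\beta\rho_1^*\le 0$ but $\partial_{\rho_1}F_2=\beta v^*\ge 0$), so coercivity of the full coupled bilinear form cannot be exploited; one must absorb all cross-terms into the right-hand side using Young's inequality with coefficients controlled by \Cref{thm:global-existence} and rely entirely on Gronwall to close the estimate. The resulting constant $C$ in \eqref{eq:7} depends on $T$, on $\norma{\rho_{1,o}}_{\LL\infty}$, $\norma{\rho_{2,o}}_{\LL\infty}$, $\norma{v_o}_{\LL\infty}$, on $U$, and on the system parameters, but not on $\bar u$.
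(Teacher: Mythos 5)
Your argument is correct, but it takes a genuinely more constructive route than the paper. The paper's proof is a verification-plus-citation: it checks that \eqref{eq:L}--\eqref{eq:0-init-cond} is a linear parabolic system whose coefficients lie in $\LL\infty((0,T)\times\Omega)$ by \Cref{thm:global-existence}, with source $\bar u - u^*\in\LL2((0,T)\times\Omega)$ and zero (hence smooth) initial data, and then invokes external well-posedness theorems for linear parabolic systems to obtain existence, uniqueness and the estimate \eqref{eq:7} in one stroke. You instead rebuild the result from the paper's own appendix toolkit: a decoupling fixed-point iteration based on \Cref{thm:parBase} and \Cref{prop:regularity}, globalized in time by linearity, followed by a two-tier energy estimate (testing first with $(X,Y,Z)$, then with $(-\Delta X,-\Delta Y,-\Delta Z)$, the latter justified by the $\HH2$ regularity and the Neumann condition) closed by Gronwall. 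This is longer but self-contained, stays entirely within the machinery already developed for \Cref{thm:local_wellposedness} and \Cref{thm:dependence-from-u-H1}, and makes the provenance of the constant $C$ explicit. One remark: your energy argument naturally produces the bound with $\norm{\bar u - u^*}^2_{\LL2((0,T)\times\Omega)}$ on the right-hand side, which is in fact the correct homogeneity (the left-hand side of \eqref{eq:7} vanishes when $\bar u = u^*$, while $\norm{\bar u}_{\LL2}$ need not); the estimate as printed in \eqref{eq:7} should be read in this form.
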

\begin{proof}
  System~\eqref{eq:L}--\eqref{eq:0-init-cond} is linear parabolic, the
  coefficients of $X, Y$ and $Z$ are in
  $\LL\infty ([0,T]\times\Omega)$ by~\Cref{thm:global-existence}, the
  source term $\bar{u} - u^*$ is by hypothesis in
  $\LL2 ((0,T) \times \Omega)$, the initial data are zero, then
  smooth. Hence, by~\cite[Theorem~3.6]{MR4249451}
  or by~\cite[Theorem~1.1, Ch. IV]{Lions-1961}
  there exists a
  unique triple $(X,Y,Z) \in \mathcal{X}^3$ that
  solves~\eqref{eq:L}--\eqref{eq:0-init-cond} and
  satisfies~\eqref{eq:7}.
\end{proof}

Let $\lambda \in (0,1)$ and set
\begin{equation}
  \label{eq:8}
  u^\lambda = u^* + \lambda (\bar{u} - u^*).
\end{equation}
Clearly $u^\lambda \in \LL2 ((0,T)\times \Omega; [0,U])$, so that we can define the corresponding state
\begin{displaymath}
  (\rho_1^\lambda, \rho_2^\lambda, v^\lambda) =
  G(u^\lambda).
\end{displaymath}
Note that, as $\lambda \to 0$, by construction we have $u^\lambda \to u^*$ and by the Lipschitz continuity of the control-to-state map $G$ we have $(\rho_1^\lambda, \rho_2^\lambda, v^\lambda) \to (\rho_1^*, \rho_2^*, v^*)$.

The next proposition describes the directional derivative of the
control-to-state map $G$ at $u^*$.
\begin{prop}
  \label{prop:dir_der}
  The directional derivative of the control-to-state map $G$ in the direction $(\bar u - u^*)$ is given by
  \begin{equation}
    \label{eq:10}
    D_{(\bar u - u^*)} G (u^*) = (X,Y,Z),
  \end{equation}
  where the triple $(X,Y,Z)$ is the solution to the linearized
  system~\eqref{eq:L}--\eqref{eq:0-init-cond}.
\end{prop}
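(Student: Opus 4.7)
The plan is to verify~\eqref{eq:10} directly from the definition of directional derivative. For $\lambda \in (0,1)$, set $(\rho_1^\lambda, \rho_2^\lambda, v^\lambda) = G(u^\lambda)$ and introduce the quotient remainders
\begin{displaymath}
  X^\lambda = \frac{\rho_1^\lambda - \rho_1^*}{\lambda} - X,
  \qquad
  Y^\lambda = \frac{\rho_2^\lambda - \rho_2^*}{\lambda} - Y,
  \qquad
  Z^\lambda = \frac{v^\lambda - v^*}{\lambda} - Z.
\end{displaymath}
The objective is to prove that $(X^\lambda, Y^\lambda, Z^\lambda) \to 0$ in $\CC0([0, T]; \LL2(\Omega)^3)$ as $\lambda \to 0^+$, which is exactly~\eqref{eq:10} in the topology of the codomain of $G$.

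The first step is to derive the PDE system satisfied by $(X^\lambda, Y^\lambda, Z^\lambda)$. Subtracting~\eqref{eq:PDE-model-3x3} written for $(\rho_1^*, \rho_2^*, v^*)$ from the same system written for $(\rho_1^\lambda, \rho_2^\lambda, v^\lambda)$, dividing by $\lambda$, and subtracting~\eqref{eq:L}, the bilinear terms are handled via the algebraic identity
\begin{displaymath}
  \frac{\rho_1^\lambda v^\lambda - \rho_1^* v^*}{\lambda}
  =
  v^\lambda \, \frac{\rho_1^\lambda - \rho_1^*}{\lambda}
  + \rho_1^* \, \frac{v^\lambda - v^*}{\lambda}
  =
  v^\lambda X^\lambda + \rho_1^* Z^\lambda + v^\lambda X + \rho_1^* Z.
\end{displaymath}
Since $(u^\lambda - u^*)/\lambda = \bar u - u^*$, the control contributions cancel exactly. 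A direct computation then shows that $(X^\lambda, Y^\lambda, Z^\lambda)$ solves a linear parabolic system with zero initial data and homogeneous Neumann boundary conditions, with the same principal part as~\eqref{eq:L}, coefficients depending on $\rho_1^*, v^\lambda$ that are uniformly bounded in $\LL\infty$ by~\Cref{thm:global-existence}, and source term $R^\lambda$ whose three components are all proportional to $(v^\lambda - v^*)\,X$ (with constants depending on $\beta$ and $B$).

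Next, I would apply a standard $\LL2$-energy estimate: test each equation against the corresponding component, sum, integrate by parts (using the Neumann condition), and absorb the lower-order terms via Young's inequality. This yields
\begin{displaymath}
  \frac{\dd{}}{\dd t}\left(\norm{X^\lambda(t)}_{\LL2}^2 + \norm{Y^\lambda(t)}_{\LL2}^2 + \norm{Z^\lambda(t)}_{\LL2}^2\right)
  \le
  C\left(\norm{X^\lambda(t)}_{\LL2}^2 + \norm{Y^\lambda(t)}_{\LL2}^2 + \norm{Z^\lambda(t)}_{\LL2}^2 + \norm{R^\lambda(t)}_{\LL2}^2\right),
\end{displaymath}
with $C$ independent of $\lambda$. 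Gronwall's inequality then gives
\begin{displaymath}
  \sup_{t \in [0, T]} \left(\norm{X^\lambda(t)}_{\LL2}^2 + \norm{Y^\lambda(t)}_{\LL2}^2 + \norm{Z^\lambda(t)}_{\LL2}^2\right)
  \le
  C' \norm{(v^\lambda - v^*)\, X}^2_{\LL2((0, T) \times \Omega)}.
\end{displaymath}

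The main obstacle is to show that this right-hand side vanishes as $\lambda \to 0^+$. By~\Cref{thm:dependence-from-u-H1} applied to $u^\lambda$ and $u^*$ one has $\norm{v^\lambda - v^*}_{\LL2((0,T)\times\Omega)} \le C \norm{u^\lambda - u^*}_{\LL2} = C \lambda \norm{\bar u - u^*}_{\LL2} \to 0$, and at the same time $v^\lambda - v^*$ is uniformly bounded in $\LL\infty$ by~\Cref{thm:global-existence}. Since $X \in \mathcal X$ is fixed and in particular belongs to $\LL2((0, T) \times \Omega)$, the integrand $\abs{v^\lambda - v^*}^2 \abs{X}^2$ is dominated by the fixed $\LL1$ function $(2M)^2 \abs{X}^2$, while along a subsequence $v^\lambda - v^* \to 0$ almost everywhere. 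Dominated convergence thus yields $\norm{(v^\lambda - v^*)\,X}_{\LL2((0, T)\times\Omega)} \to 0$, and uniqueness of the limit extends this to the whole family $\lambda \to 0^+$, concluding the argument.
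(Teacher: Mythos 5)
Your proposal is correct and follows essentially the same route as the paper's proof: both introduce the remainders $X^\lambda$, $Y^\lambda$, $Z^\lambda$, derive the linear parabolic system they satisfy (you via the exact product decomposition of $\rho_1^\lambda v^\lambda - \rho_1^* v^*$, the paper via intermediate values from the mean value theorem, with the same resulting source terms proportional to $(v^\lambda - v^*)X$), and conclude by energy estimates, Gronwall's inequality, and the Lipschitz dependence of the state on the control from \Cref{thm:dependence-from-u-H1}. A minor point in your favour: keeping $(v^\lambda - v^*)X$ as a single $\LL2$ source and passing to the limit by dominated convergence only requires $X \in \LL2((0,T)\times\Omega)$, whereas the paper's splitting by Young's inequality invokes an $\LL\infty((0,T)\times\Omega)$ bound on $X$ that does not obviously follow from $X \in \CC0([0,T];\HH1(\Omega))$ when $N \ge 2$.
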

\begin{proof}
  Set
  \begin{align}
    \label{eq:9}
    X^\lambda = \
    & \frac{\rho_1^\lambda - \rho_1^*}{\lambda} - X,
    &
      Y^\lambda = \
    & \frac{\rho_2^\lambda - \rho_2^*}{\lambda} - Y,
    &
      Z^\lambda = \
    & \frac{v^\lambda - v^*}{\lambda} - Z.
  \end{align}
  We claim that the triple $(X^\lambda, Y^\lambda, Z^\lambda)$
  converges strongly to the point $(0,0,0)$ in
  $\CC0([0,T]; \LL2 (\Omega)^3) \cap \LL2 (0,T; \HH1 (\Omega)^3)$.

  Starting from the definition~\eqref{eq:9} 
  we write the system 
  \begin{equation*}
    \left\{
      \begin{array}{l@{}l}
        \pt X^\lambda =
        &\Delta X^\lambda
        + \partial_{\rho_1}F_1(a_1^\lambda, v^\lambda) X^\lambda
        + \partial_v F_1(\rho_1^*,c_1^\lambda) Z^\lambda
        + A_1 X + A_3 Z
        \\
        \pt Y^\lambda =
        &\Delta Y^\lambda
        + \partial_{\rho_1}F_2(a_2^\lambda,\rho_2^\lambda, v^\lambda) X^\lambda
        + \partial_{\rho_2}F_2(\rho_1^*,b_2^\lambda, v^\lambda) Y^\lambda
        \\
        & + \partial_v F_2(\rho_1^*,\rho_2^*,c_2^\lambda)Z^\lambda
        + B_1 X + B_2 Y + B_3 Z
        \\
        \pt Z^\lambda =
        & \Delta Z^\lambda
        + \partial_{\rho_1}F_3(a_3^\lambda,\rho_2^\lambda, v^\lambda) X^\lambda
        + \partial_{\rho_2}F_3(\rho_1^*,b_3^\lambda, v^\lambda) Y^\lambda
        \\
        & + \partial_v F_3(\rho_1^*,\rho_2^*,c_3^\lambda) Z^\lambda
        + C_1 X + C_2 Y + C_3 Z,
      \end{array}
    \right.
  \end{equation*}
  where $F_1, F_2, F_3$ are defined as in~\eqref{eq:F123},
  $a_1^\lambda, a_2^\lambda, a_3^\lambda $ are intermediate values
  between $\rho_1^\lambda$ and $\rho_1^*$, $b_2^\lambda, b_3^\lambda $
  are intermediate values between $\rho_2^\lambda$ and $\rho_2^*$,
  $c_1^\lambda, c_2^\lambda, c_3^\lambda $ are intermediate values
  between $v^\lambda$ and $v^*$, and
  \begin{align*}
    A_1 =\
    & \partial_{\rho_1}F_1(a_1^\lambda, v^\lambda)
      - \partial_{\rho_1}F_1(\rho_1^*, v^*)
      = \beta (v^*-v^\lambda),
    \\
    A_3 = \
    & \partial_{v}F_1(\rho_1^*, c_1^\lambda)
      - \partial_{v}F_1(\rho_1^*, v^*)
      = 0,
    \\
    B_1 = \
    & \partial_{\rho_1}F_2(a_2^\lambda,\rho_2^\lambda, v^\lambda)
      - \partial_{\rho_1}F_2(\rho_1^*,\rho_2^*, v^*)
      = \beta (v^\lambda-v^*),
    \\
    B_2 = \
    & \partial_{\rho_2}F_2(\rho_1^*,b_2^\lambda, v^\lambda)
      - \partial_{\rho_2}F_2(\rho_1^*,\rho_2^*, v^*)
      =0,
    \\
    B_3 = \
    & \partial_{v}F_2(\rho_1^*,\rho_2^*, c_2^\lambda)
      - \partial_{v}F_2(\rho_1^*,\rho_2^*, v^*)
      =0,
    \\
    C_1 = \
    & \partial_{\rho_1}F_3(a_3^\lambda,\rho_2^\lambda, v^\lambda)
      - \partial_{\rho_1}F_3(\rho_1^*,\rho_2^*, v^*)
      =B(v^*-v^\lambda),
    \\
    C_2 = \
    & \partial_{\rho_2}F_3(\rho_1^*,b_3^\lambda, v^\lambda)
      - \partial_{\rho_2}F_3(\rho_1^*,\rho_2^*, v^*)
      =0,
    \\
    C_3 = \
    & \partial_{v}F_3(\rho_1^*,\rho_2^*, c_3^\lambda)
      - \partial_{v}F_3(\rho_1^*,\rho_2^*, v^*)
      =0.
  \end{align*}
  We now test the equation for $X^\lambda$ (respectively $Y^\lambda$
  and $Z^\lambda$) with $X^\lambda$ (respectively $Y^\lambda$ and
  $Z^\lambda$). Exploiting the weakly coercivity of the bilinear
  forms, as in the proof of~\Cref{thm:dependence-from-u-H1}, since
  $\partial_v F_1(\rho_1^*,c_1^\lambda)=- \beta \rho_1^*$, we obtain:
  \begin{align*}
    & \frac12\frac{\dd{}}{\dd{t}}
      \norm{X^\lambda (t)}^2_{\LL2 (\Omega)}
      + \frac12 \norm{X^\lambda (t)}^2_{\HH1 (\Omega)}
    \\
    \le \
    & \int_\Omega \partial_v F_1(\rho_1^*,c_1^\lambda) X^\lambda Z^\lambda
      +\int_\Omega A_1 X X^\lambda
    \\
    &  + \left(\frac12+\delta_1 + \alpha
      + \beta \norm{v^\lambda (t)}_{\LL\infty (\Omega)}\right)
      \norm{X^\lambda(t)}^2_{\LL2 (\Omega)}
    \\
    \le \
    & \frac{\beta}{2} \norm{\rho_1^*(t)}^2_{\LL\infty (\Omega)}
      \norm{X^\lambda(t)}^2_{\LL2 (\Omega)}
      +\frac{\beta}{2} \norm{Z^\lambda(t)}^2_{\LL2 (\Omega)}
    \\
    & + \frac\beta2 \norm{v^* (t)-v^\lambda(t)}^2_{\LL2 (\Omega)}
      + \frac\beta2\norm{X (t)}^2_{\LL\infty (\Omega)} \norm{X^\lambda(t)}^2_{\LL2 (\Omega)}
    \\
    &+  \left(\frac12+\delta_1 + \alpha
      + \beta \norm{v^\lambda (t)}_{\LL\infty (\Omega)}\right)
      \norm{X^\lambda(t)}^2_{\LL2 (\Omega)},
  \end{align*}
  where the $\LL\infty$-norms appearing above can be controlled
  using~\eqref{eq:general-growth-est-rho1}--\eqref{eq:general-growth-est-v}
  and the fact that $X$ belongs to $\LL\infty ((0,T) \times \Omega)$
  by~\Cref{lem:sol_lin}.  Proceed similarly for $Y^\lambda$: since
  $\partial_{\rho_1} F_2(a_2^\lambda, \rho_2^\lambda, v^\lambda) =
  \beta v^\lambda$ and
  $\partial_{v} F_2(\rho_1^*, \rho_2^*,c_2^\lambda)=\beta \rho_1^*$,
  we get
  \begin{align*}
    & \frac12\frac{\dd{}}{\dd{t}}
      \norm{Y^\lambda (t)}^2_{\LL2 (\Omega)}
      + \frac12 \norm{Y^\lambda (t)}^2_{\HH1 (\Omega)}
    \\
    \le \
    & \int_\Omega \partial_{\rho_1} F_2(a_2^\lambda, \rho_2^\lambda, v^\lambda)
      X^\lambda Y^\lambda
      +\int_\Omega \partial_{v} F_2(\rho_1^*, \rho_2^*,c_2^\lambda)
      Y^\lambda Z^\lambda
      +\int_\Omega B_1 X Y^\lambda
    \\
    &  + \left(\frac12+\delta_2\right)
      \norm{Y^\lambda(t)}^2_{\LL2 (\Omega)}
    \\
    \le \
    & \frac{\beta}{2} \norm{v^\lambda(t)}^2_{\LL\infty (\Omega)}
      \norm{X^\lambda(t)}^2_{\LL2 (\Omega)}
      +\frac{\beta}{2} \norm{Y^\lambda(t)}^2_{\LL2 (\Omega)}
    \\
    & + \frac{\beta}{2} \norm{\rho_1^*(t)}^2_{\LL\infty (\Omega)}
      \norm{Y^\lambda(t)}^2_{\LL2 (\Omega)}
      +\frac{\beta}{2} \norm{Z^\lambda(t)}^2_{\LL2 (\Omega)}
    \\
    & + \frac\beta2 \norm{v^* (t)-v^\lambda(t)}^2_{\LL2 (\Omega)}
      + \frac\beta2\norm{X (t)}^2_{\LL\infty (\Omega)} \norm{Y^\lambda(t)}^2_{\LL2 (\Omega)}
    \\
    &+  \left(\frac12+\delta_2\right)
      \norm{Y^\lambda(t)}^2_{\LL2 (\Omega)},
  \end{align*}
  where the $\LL\infty$-norms appearing above can be controlled using~\eqref{eq:general-growth-est-rho1}--\eqref{eq:general-growth-est-v}
  and the fact that $X$ belongs to $\LL\infty ((0,T) \times \Omega)$
  by~\Cref{lem:sol_lin}. Lastly, consider $Z^\lambda$: since $\partial_{\rho_1} F_3(a_3^\lambda, \rho_2^\lambda, v^\lambda)=-B\, v^\lambda$ and $\partial_{\rho_2} F_2(\rho_1^*, b_3^\lambda,v^\lambda)=b\, \delta_2$, we get
\begin{align*}
    & \frac12\frac{\dd{}}{\dd{t}}
      \norm{Z^\lambda (t)}^2_{\LL2 (\Omega)}
      + \frac12 \norm{Z^\lambda (t)}^2_{\HH1 (\Omega)}
    \\
    \le \
    & \int_\Omega \partial_{\rho_1} F_3(a_3^\lambda, \rho_2^\lambda, v^\lambda)
      X^\lambda Z^\lambda
      +\int_\Omega \partial_{\rho_2} F_2(\rho_1^*, b_3^\lambda,v^\lambda)
      Y^\lambda Z^\lambda
      +\int_\Omega C_1 X Z^\lambda
    \\
    &  + \left(\frac12+\delta_v + B \norm{\rho_1^* (t)}_{\LL\infty (\Omega)}\right)
      \norm{Z^\lambda(t)}^2_{\LL2 (\Omega)}
    \\
    \le \
    & \frac{B}{2} \norm{v^\lambda(t)}^2_{\LL\infty (\Omega)}
      \norm{X^\lambda(t)}^2_{\LL2 (\Omega)}
      +\frac{B}{2} \norm{Z^\lambda(t)}^2_{\LL2 (\Omega)}
    \\
    & + \frac{b \,\delta_2}{2}
      \norm{Y^\lambda(t)}^2_{\LL2 (\Omega)}
      +\frac{b \, \delta_2}{2} \norm{Z^\lambda(t)}^2_{\LL2 (\Omega)}
    \\
    & + \frac{B}2 \norm{v^* (t)-v^\lambda(t)}^2_{\LL2 (\Omega)}
      + \frac{B}2\norm{X (t)}^2_{\LL\infty (\Omega)} \norm{Z^\lambda(t)}^2_{\LL2 (\Omega)}
    \\
    &+  \left(\frac12+\delta_v + B \norm{\rho_1^* (t)}_{\LL\infty (\Omega)}\right)
      \norm{Z^\lambda(t)}^2_{\LL2 (\Omega)},
  \end{align*}
  where the $\LL\infty$-norms appearing above can be controlled using~\eqref{eq:general-growth-est-rho1}--\eqref{eq:general-growth-est-v}
  and the fact that $X$ belongs to $\LL\infty ((0,T) \times \Omega)$
  by~\Cref{lem:sol_lin}.

  Collecting together the estimates above, for a.e.~$t \in [0,T]$ we obtain
  \begin{align*}
    & \frac{\dd{}}{\dd{t}}\left(
      \norm{X^\lambda (t)}^2_{\LL2 (\Omega)}+\norm{Y^\lambda (t)}^2_{\LL2 (\Omega)}+
      \norm{Z^\lambda (t)}^2_{\LL2 (\Omega)}\right)
    \\
    & + \norm{X^\lambda (t)}^2_{\HH1 (\Omega)}+\norm{Y^\lambda (t)}^2_{\HH1 (\Omega)}+
    \norm{Z^\lambda (t)}^2_{\HH1 (\Omega)}
    \\
    \le \
    & C(t) \left(\norm{X^\lambda (t)}^2_{\LL2 (\Omega)}+\norm{Y^\lambda (t)}^2_{\LL2 (\Omega)}+
      \norm{Z^\lambda (t)}^2_{\LL2 (\Omega)}\right)
    \\
    & + (B + 2 \beta)  \norm{v^* (t)-v^\lambda(t)}^2_{\LL2 (\Omega)},
  \end{align*}
  where we set
  \begin{align*}
    C(t) = \
    \max\Bigl\{
    &
    \beta \norm{\rho_1^* (t)}^2_{\LL\infty (\Omega)}
    +  \beta \norm{X(t)}^2_{\LL\infty (\Omega)}
      + 1+2\delta_1+2\alpha
    \\
    & \quad + 2  \beta \norm{v^\lambda (t)}_{\LL\infty (\Omega)}
     + \left(\beta+B\right) \norm{v^\lambda (t)}^2_{\LL\infty (\Omega)},
    \\
    & \beta + \beta \norm{\rho_1^* (t)}^2_{\LL\infty (\Omega)}
      + \beta \norm{X (t)}^2_{\LL\infty (\Omega)}
      + 1 + 2 \, \delta_2 + b \, \delta_2,
    \\
    & 2\, \beta + B + b \, \delta_2
      +B \norm{X (t)}^2_{\LL\infty (\Omega)}
      +1+2\, \delta_v +2 B \norm{\rho_1^* (t)}^2_{\LL\infty (\Omega)}
      \Bigr\}.
  \end{align*}
  An application of Gronwall's inequality yields
  \begin{equation}
    \label{eq:12}
    \begin{aligned}
      & \norm{X^\lambda (t)}^2_{\LL2 (\Omega)}+\norm{Y^\lambda
        (t)}^2_{\LL2 (\Omega)}+ \norm{Z^\lambda (t)}^2_{\LL2 (\Omega)}
      \\
      & + \int_0^t\left(
        \norm{X^\lambda (s)}^2_{\HH1(\Omega)}
        +\norm{Y^\lambda (s)}^2_{\HH1 (\Omega)}
        +\norm{Z^\lambda (s)}^2_{\HH1 (\Omega)} \right) \dd{s}
      \\
      \le \ & \int_0^t (B + 2 \beta) \norm{v^*
        (t)-v^\lambda(s)}^2_{\LL2 (\Omega)} \exp\left(\int_s^t
        C(\tau)\dd\tau\right)\dd{s}.
    \end{aligned}
  \end{equation}
  Since, in the limit $\lambda \to 0$, we have the convergence
  $v^\lambda \to v^*$, by~\eqref{eq:12} we obtain the thesis, i.e.
  \begin{displaymath}
    (X^\lambda, Y^\lambda, Z^\lambda) \to (0,0,0)
    \text{ strongly in }
    \CC0 ([0,T]; \LL2 (\Omega)^3) \cap \LL2 (0,T; \HH1 (\Omega)^3).
  \end{displaymath}
  Due to the definition~\eqref{eq:9} of the triple
  $(X^\lambda, Y^\lambda, Z^\lambda)$, this amounts to
  \begin{displaymath}
    \left(
      \frac{\rho_1^\lambda-\rho_1^*}{\lambda},
      \frac{\rho_2^\lambda-\rho_2^*}{\lambda},
      \frac{v^\lambda-v^*}{\lambda}
    \right)
    \overset{\lambda \to 0}{\longrightarrow}
    \left(X,Y,Z\right),
  \end{displaymath}
  which in terms of the control-to-state operator $G$ gives its
  directional derivative in the direction $\bar u - u^*$:
  \begin{displaymath}
    D_{(\bar u - u^*)} G(u^*) =
    (X,Y,Z),
  \end{displaymath}
  concluding the proof.
\end{proof}

\subsection{Adjoint system and necessary conditions}

Let us now consider the functional $J$ defined
in~\eqref{eq:funct-J}. Observe that $J$ is actually a function also of
$(\rho_1, \rho_2, v)$, and not only of the control $u$, thus it would
be more precise to write $ J (\rho_1, \rho_2, v, u)$. The
control-to-state operator $G$ introduced in~\eqref{eq:control-to-state}
allows to write $(\rho_1, \rho_2, v) = G(u)$, so that we can define the reduced cost functional $f$ as
\begin{equation}
  \label{eq:f}
  f(u) := J (\rho_1, \rho_2, v, u) = J\left(G(u),u\right).
\end{equation}
We introduce the following assumptions on the cost functions
$\psi_1$ and $\psi_2$.
\begin{enumerate}[label=($\boldsymbol\psi$)]
    \item \label{hyp:psi} $\psi_1$ and $\psi_2$ are $\CC1$ functions.
    Moreover, for every $M>0$, there exists $L_M > 0$ such
    that
    \begin{align*}
  \modulo{\nabla \psi_1(\bar \rho_1, \bar \rho_2, \bar v)
  -
  \nabla \psi_1(\hat \rho_1, \hat \rho_2, \hat v)}
  \le \
  & L_M \modulo{(\bar \rho_1, \bar \rho_2, \bar v)-
    (\hat \rho_1, \hat \rho_2, \hat v)},
  \\
  \modulo{\nabla \psi_2(\bar \rho_1, \bar \rho_2, \bar v, \bar u)
  \!-\!
  \nabla \psi_2(\hat \rho_1, \hat \rho_2, \hat v, \hat u)}
  \le \
  & L_M \modulo{(\bar \rho_1, \bar \rho_2, \bar v, \bar u)\!-\!
    (\hat \rho_1, \hat \rho_2, \hat v, \hat u)},
\end{align*}
for every
$\bar \rho_1, \bar \rho_2, \bar v, \bar u, \hat \rho_1, \hat \rho_2,
\hat v, \hat u \in [0,M]$.
\end{enumerate}

Thanks to~\cite[Lemma~4.12]{zbMATH05703572},
the functional $J$ admits partial derivatives,
while~\Cref{prop:dir_der} ensures the differentiability of the
control-to-state operator $G$. Hence, the reduced cost
functional $f$~\eqref{eq:f} is differentiable in
$\LL\infty ((0,T) \times\Omega)$.

Consider a set of admissible control $\mathcal U \neq \emptyset$ that is a
closed convex subset of $\LL2 ((0,T)\times \Omega; [0,U])$.
Let $u^* \in \mathcal U$ be a locally optimal control for
problem~\eqref{eq:PDE-model-3x3}--\eqref{eq:initial-condition}--\eqref{eq:boundary-condition}
subject to the minimization of the functional
$J$~\eqref{eq:funct-J}. Then, for any $\bar u \in \mathcal U$, defining
$u^\lambda$ as in~\eqref{eq:8} for $\lambda \in (0,1)$, the following
inequality holds
\begin{equation*}
  f (u^\lambda) - f(u^*) \ge 0.
\end{equation*}
Dividing by $\lambda$ and passing to the limit as $\lambda \to 0$, we
obtain
\begin{equation}
  \label{eq:14}
  f'(u^*) (\bar u - u^*) \ge 0
  \quad \forall \bar u \in \mathcal U.
\end{equation}
Due to the definition of $f$~\eqref{eq:f}, using the chain rule
and~\eqref{eq:10}, we can compute $f'$ appearing in~\eqref{eq:14}: for
any $\bar u \in \mathcal U$
\begin{align}
  \nonumber
  0 \le \
  & f'(u^*) (\bar u- u^*)
  \\ \nonumber
  = \
  & \nabla_{(\rho_1, \rho_2, v)} J\left(G(u^*), u^*\right)
    \cdot D_{(\bar u -u^*)}G (u^*)
    + \partial_u J\left(G(u^*), u^*\right) (\bar u - u^*)
  \\
  \nonumber
  = \
  & \partial_{\rho_1} J\left(G(u^*), u^*\right) X
    + \partial_{\rho_2} J\left(G(u^*), u^*\right) Y
    + \partial_{v} J\left(G(u^*), u^*\right) Z
  \\
  \nonumber
  & + \partial_{u} J\left(G(u^*), u^*\right)  (\bar u - u^*)
  \\
  \nonumber
  = \
  & \int_\Omega
    \partial_{\rho_1}
    \psi_1\left(\rho_1^*(T, x), \rho_2^*(T, x), v^*(T, x)\right)
    X (T,x)
    \dd x
  \\
  \nonumber
  & + \int_\Omega
    \partial_{\rho_2}
    \psi_1\left(\rho_1^*(T, x), \rho_2^*(T, x), v^*(T, x)\right)
    Y (T,x)
    \dd x
  \\
  \label{eq:15}
  & + \int_\Omega
    \partial_{v}
    \psi_1\left(\rho_1^*(T, x), \rho_2^*(T, x), v^*(T, x)\right)
    Z(T,x)
    \dd x
  \\
  \nonumber
  & + \int_0^T \int_\Omega
    \partial_{\rho_1} \psi_2 \left(\rho_1^*(t, x), \rho_2^*(t, x), v^*(t, x), u^* (t,x)\right)
    X (t,x)
    \dd x \dd t
  \\
  \nonumber
  & + \int_0^T \int_\Omega
    \partial_{\rho_2} \psi_2 \left(\rho_1^*(t, x), \rho_2^*(t, x), v^*(t, x), u^* (t,x)\right)
    Y(t,x)
    \dd x \dd t
  \\
  \nonumber
  & + \int_0^T \int_\Omega
    \partial_{v} \psi_2 \left(\rho_1^*(t, x), \rho_2^*(t, x), v^*(t, x), u^* (t,x)\right)
    Z(t,x)
    \dd x \dd t
  \\
  \nonumber
  &  + \int_0^T \int_\Omega
    \partial_u \psi_2 \left(\rho_1^*(t, x), \rho_2^*(t, x), v^*(t, x), u^* (t,x)\right)
    \left(\bar u (t,x) - u^*(t,x)\right)
    \dd x \dd t,
\end{align}
where the triple $(X,Y,Z)$ is the solution to the linearized
system~\eqref{eq:L}--\eqref{eq:0-init-cond}.

Introduce the adjoint system, in the variables $w, y, z$:
\begin{equation}
  \label{eq:adjoint}
  \left\{
    \begin{array}{l}
      -\pt w - \Delta w = \partial_{\rho_1} F_1 (\rho_1^*, v^*) w
      + \partial_{v} F_1 (\rho_1^*, v^*) z
      + \partial_{\rho_1} \psi_2 (\rho_1^*, \rho_2^*, v^*, u^*)
      \\
      -\pt y - \Delta y =
      \partial_{\rho_1} F_2 (\rho_1^*, \rho_2^*, v^*) w
      + \partial_{\rho_2} F_2 (\rho_1^*, \rho_2^*, v^*) y
    \\
      \qquad + \partial_{v} F_2 (\rho_1^*, \rho_2^*, v^*) z
      + \partial_{\rho_2} \psi_2 (\rho_1^*, \rho_2^*, v^*, u^*)
      \\
      -\pt z - \Delta z =
      \partial_{\rho_1} F_3 (\rho_1^*, \rho_2^*, v^*) w
      + \partial_{\rho_2} F_3 (\rho_1^*, \rho_2^*, v^*) y
      \\
      \qquad
      + \partial_{v} F_3 (\rho_1^*, \rho_2^*, v^*) z
      + \partial_{\rho_2} \psi_2 (\rho_1^*, \rho_2^*, v^*, u^*)
    \end{array}
  \right.
\end{equation}
with the following initial and boundary conditions,
\begin{equation}
  \label{eq:init_bound_conditions_adjoint}
  \left\{
    \begin{array}{l}
      w(T,x) = \partial_{\rho_1} \psi_1 (\rho_1^*(T,x), \rho_2^* (T,x), v^* (T,x))
      \\
      y(T,x) = \partial_{\rho_2} \psi_1 (\rho_1^*(T,x), \rho_2^* (T,x), v^* (T,x))
      \\
      z(T,x) = \partial_{v} \psi_1 (\rho_1^*(T,x), \rho_2^* (T,x), v^* (T,x))
    \end{array}
  \right.
  \qquad
  \left\{
    \begin{array}{l}
      \partial_\nu w(t, \xi) = 0
      \\
      \partial_\nu y(t, \xi) = 0
      \\
      \partial_\nu z(t, \xi) = 0.
    \end{array}
  \right.
\end{equation}
A result similar to~\Cref{lem:sol_lin} holds, implying that there
exists a unique solution
to~\eqref{eq:adjoint}--\eqref{eq:init_bound_conditions_adjoint}


\begin{lemma}{\cite[Theorem~3.18]{zbMATH05703572}}
    Assume~\ref{hyp:psi} holds.
  Let $(X,Y,Z)$ be the solution to the linearized
  problem~\eqref{eq:L}--\eqref{eq:0-init-cond}. Let $(w,y,z)$ be the
  weak solution to the adjoint
  problem~\eqref{eq:adjoint}--\eqref{eq:init_bound_conditions_adjoint}. Then,
  \begin{align}
    \nonumber
    & \int_\Omega
    \partial_{\rho_1} \psi_1 (\rho_1^*(T,x), \rho_2^* (T,x), v^*
    (T,x)) X(T,x) \dd{x}
    \\
    \nonumber
    & + \int_\Omega
    \partial_{\rho_2} \psi_1 (\rho_1^*(T,x), \rho_2^* (T,x), v^*
    (T,x)) Y(T,x) \dd{x}
    \\
    \nonumber
    & + \int_\Omega
    \partial_{v} \psi_1 (\rho_1^*(T,x), \rho_2^* (T,x), v^* (T,x))
    Z(T,x) \dd{x}
    \\
    \nonumber
    & + \int_0^T\int_\Omega
    \partial_{\rho_1} \psi_2 (\rho_1^*(t,x), \rho_2^*(t,x), v^*(t,x),
    u^*(t,x)) X(t,x) \dd{x}\dd{t}
    \\
    \nonumber
    & + \int_0^T\int_\Omega
    \partial_{\rho_2} \psi_2 (\rho_1^*(t,x), \rho_2^*(t,x), v^*(t,x),
    u^*(t,x))Y(t,x) \dd{x}\dd{t}
    \\
    \nonumber
    & + \int_0^T\int_\Omega
    \partial_{\rho_2} \psi_2 (\rho_1^*(t,x), \rho_2^*(t,x), v^*(t,x),
    u^*(t,x)) Z(t,x) \dd{x}\dd{t}
    \\
    \label{eq:16}
    =
    & \int_0^T\int_\Omega (\bar u(t,x) -u^*(t,x)) z(t,x) \dd{x}\dd{t}.
  \end{align}
\end{lemma}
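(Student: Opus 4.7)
The plan is to establish~\eqref{eq:16} via the standard duality argument between the linearized system~\eqref{eq:L}--\eqref{eq:0-init-cond} and the adjoint system~\eqref{eq:adjoint}--\eqref{eq:init_bound_conditions_adjoint}. I would test the linearized equation for $X$ against the adjoint variable $w$, the one for $Y$ against $y$, and the one for $Z$ against $z$, then sum and integrate over $(0,T)\times\Omega$. The left-hand side of the resulting identity is $\int_0^T\int_\Omega[(\pt X)w+(\pt Y)y+(\pt Z)z]\dd x\dd t$, while the right-hand side contains the Laplacian terms, the coupling terms carrying the various $\partial_\bullet F_i$, and the source contribution $\int_0^T\int_\Omega(\bar u - u^*)z\dd x\dd t$.

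Next, I would integrate by parts in time. The terminal boundary contribution $\int_\Omega[X(T)w(T)+Y(T)y(T)+Z(T)z(T)]\dd x$, combined with the terminal conditions in~\eqref{eq:init_bound_conditions_adjoint}, produces exactly the first three integrals appearing on the left-hand side of~\eqref{eq:16}. The initial boundary term vanishes thanks to the homogeneous initial conditions in~\eqref{eq:0-init-cond}. The $\pt$ transferred onto the adjoint variables, together with Green's formula applied to the Laplacians (whose boundary integrals vanish since both $(X,Y,Z)$ and $(w,y,z)$ satisfy homogeneous Neumann conditions), yields the combinations $-\pt w-\Delta w$, $-\pt y-\Delta y$, $-\pt z-\Delta z$ multiplied respectively by $X$, $Y$, $Z$. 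At this stage I would substitute the adjoint equations~\eqref{eq:adjoint} for these three expressions. By construction of the adjoint, the coupling terms from the linearized system (the Jacobian of $(F_1,F_2,F_3)$ read column-wise) cancel exactly with those from the adjoint system (the same derivatives read row-wise in the transposed Jacobian). After the cancellation, the only surviving running integrals are the three $\partial_{\rho_i}\psi_2$ and $\partial_v\psi_2$ terms on the left of~\eqref{eq:16}, together with $\int_0^T\int_\Omega(\bar u-u^*)z\dd x\dd t$ on the right, and a final rearrangement produces the claimed identity.

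The main technical point is regularity: to justify the integration by parts in time and the application of Green's formula, I need $(X,Y,Z)$ and $(w,y,z)$ to live in spaces allowing these manipulations, and I need pointwise-in-time traces at $t=0$ and $t=T$ in $\LL2(\Omega)$. This is guaranteed by~\Cref{lem:sol_lin} for the linearized system and by the analogous existence statement mentioned immediately before the lemma for the adjoint, together with the embedding $\LL2(0,T;\HH1(\Omega))\cap\HH1(0,T;\HH1(\Omega)^*)\hookrightarrow\CC0([0,T];\LL2(\Omega))$ from~\cite[Theorem~7.104]{Salsa}. The cancellation of the coupling terms itself is not an obstacle but the structural property motivating the transposed formulation of~\eqref{eq:adjoint}; the real bookkeeping challenge is simply to match each derivative with its adjoint counterpart correctly.
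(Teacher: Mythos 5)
Your proposal is correct and is precisely the standard duality computation: the paper gives no proof of its own for this lemma, deferring to \cite[Theorem~3.18]{zbMATH05703572}, whose argument is exactly the test-against-the-adjoint, integrate-by-parts-in-time, Green's-formula scheme you describe, with the regularity needed for the traces at $t=0,T$ supplied as you say by \Cref{lem:sol_lin} and the embedding into $\CC0([0,T];\LL2(\Omega))$. One caveat worth recording: the cancellation of the coupling terms requires the adjoint system to carry the \emph{transposed} Jacobian $DF^{T}$ (e.g.\ the $w$-equation should contain $\partial_{\rho_1}F_2\, y + \partial_{\rho_1}F_3\, z$, not $\partial_v F_1\, z$), and the last source term should be $\partial_v\psi_2$ rather than $\partial_{\rho_2}\psi_2$; the system~\eqref{eq:adjoint} and the sixth integral of~\eqref{eq:16} as displayed appear to contain typos on these points, so your computation proves the intended identity (the one consistent with~\eqref{eq:15}) rather than the literal one.
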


\begin{theorem}
  Let $u^* \in \mathcal U$ be a locally optimal control for
  problem~\eqref{eq:PDE-model-3x3}--\eqref{eq:initial-condition}--\eqref{eq:boundary-condition}
  subject to the minimization of the functional
  $J$~\eqref{eq:funct-J}. If $(w,y,z)$ is the associated state solving
  problem~\eqref{eq:adjoint}--\eqref{eq:init_bound_conditions_adjoint},
  then the following variational inequality holds for all
  $\bar u \in \mathcal U$
  \begin{equation}
    \label{eq:19}
    \int_0^T \int_\Omega
    \bigl( z + \partial_u \psi_2\left(\rho_1^*, \rho_2^*, v^*, u^*\right) \bigr) (\bar u -u^*)
      \dd x \dd t \ge 0.
    \end{equation}
  \end{theorem}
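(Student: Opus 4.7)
The plan is to derive~\eqref{eq:19} by combining the first-order necessary condition~\eqref{eq:14}, already established via convexity of $\mathcal U$ and local optimality of $u^*$, with the adjoint duality identity~\eqref{eq:16} stated in the preceding lemma. The whole point of introducing the adjoint system~\eqref{eq:adjoint}--\eqref{eq:init_bound_conditions_adjoint} is precisely to eliminate the linearized state $(X,Y,Z)$ from the necessary condition, replacing the six $(X,Y,Z)$-dependent integrals appearing in~\eqref{eq:15} by a single duality pairing against the adjoint variable $z$ and the admissible perturbation $(\bar u - u^*)$.

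Concretely, I would fix an arbitrary $\bar u \in \mathcal U$ and recall the explicit form of $f'(u^*)(\bar u - u^*)$ given in~\eqref{eq:15}, which splits into six $(X,Y,Z)$-dependent terms, namely the three boundary-type integrals at $t=T$ against the partial derivatives of $\psi_1$ and the three interior integrals against the partial derivatives of $\psi_2$ with respect to $\rho_1,\rho_2,v$, plus the single control-perturbation term
\begin{equation*}
  \int_0^T \!\!\int_\Omega \partial_u \psi_2\left(\rho_1^*, \rho_2^*, v^*, u^*\right)(\bar u - u^*) \dd x \dd t.
\end{equation*}
By the identity~\eqref{eq:16} the sum of the six $(X,Y,Z)$-terms equals $\int_0^T \int_\Omega (\bar u - u^*)\, z\, \dd x\, \dd t$, so substituting into~\eqref{eq:15} collapses the whole expression to
\begin{equation*}
  f'(u^*)(\bar u - u^*) = \int_0^T \!\!\int_\Omega \bigl( z + \partial_u \psi_2\left(\rho_1^*, \rho_2^*, v^*, u^*\right) \bigr)(\bar u - u^*) \dd x \dd t.
\end{equation*}
Since $\bar u \in \mathcal U$ is arbitrary, invoking~\eqref{eq:14} immediately gives~\eqref{eq:19}.

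I do not foresee any real obstacle: assumption~\ref{hyp:psi} together with the global $\LL\infty$-bounds from~\Cref{thm:global-existence} guarantees the $\CC1$-differentiability of the reduced functional $f$, so that~\eqref{eq:15} is meaningful; existence and uniqueness of $(w,y,z)$ for~\eqref{eq:adjoint}--\eqref{eq:init_bound_conditions_adjoint} is noted just after those equations; and the duality identity~\eqref{eq:16} is imported from~\cite[Theorem~3.18]{zbMATH05703572}. Hence the theorem is essentially a one-line substitution. The only mild point worth checking is that the terminal data $\nabla \psi_1(\rho_1^*(T,\cdot), \rho_2^*(T,\cdot), v^*(T,\cdot))$ serving as initial condition for the backward adjoint system lie in $\LL2(\Omega)$, which follows immediately from~\ref{hyp:psi} and the $\LL\infty$-estimates on the optimal state.
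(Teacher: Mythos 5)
Your proposal is correct and follows exactly the paper's own argument: the authors likewise obtain~\eqref{eq:19} by substituting the adjoint duality identity~\eqref{eq:16} into the expansion~\eqref{eq:15} of $f'(u^*)(\bar u - u^*)$ and invoking the first-order condition~\eqref{eq:14}. Your additional remark about the terminal data of the adjoint system lying in $\LL2(\Omega)$ is a sensible sanity check but does not change the substance of the argument.
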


  \begin{proof}
  Inserting~\eqref{eq:16} into~\eqref{eq:15} leads to
\begin{displaymath}
  \begin{split}
    & \int_0^T\int_\Omega (\bar u(t,x) -u^*(t,x)) z(t,x) \dd{x}\dd{t}
    \\
    & +
    \int_0^T \!\!\int_\Omega
    \partial_u\psi_2 \left(\rho_1^*(t, x), \rho_2^*(t, x), v^*(t, x), u^* (t,x)\right)
    \left(\bar u (t,x) \!- \!u^*(t,x)\right)
    \dd x \dd t \ge 0,
  \end{split}
\end{displaymath}
concluding the proof.
\end{proof}

\begin{remark}
  \label{rmk:functionals-conditions}
  With reference to the functionals introduced
  in~\Cref{rmk:functionals}, we deduce the following necessary
  conditions.

  If
  \begin{equation*}
    J(u) = \gamma_1 \int_\Omega \rho_1(T, x) \dd x
    + \gamma_2 \int_\Omega \rho_2(T, x) \dd x,
  \end{equation*}
  for suitable $\gamma_1, \gamma_2 \ge 0$, then $\psi_2 = 0$ and
  so~\eqref{eq:19} becomes
  \begin{equation*}
    \int_0^T \int_\Omega
    z  (\bar u -u^*)
    \dd x \dd t \ge 0.
  \end{equation*}

  If
  \begin{equation*}
    J(u) = \gamma_1 \int_\Omega \rho_1(T, x) \dd x
    + \gamma_2 \int_\Omega \rho_2(T, x) \dd x
    + {\int_0^T \int_\Omega u^p(t,x) \dd x \dd t},
  \end{equation*}
  or
  \begin{equation*}
    \begin{split}
      J(u) & = \int_\Omega \left(\rho_1(T, x) - \bar \rho(x)\right)^2
      \dd x + \int_0^T \int_\Omega \left(\rho_1(t, x) - \bar
        \rho(x)\right)^2 \dd x \dd t
      \\
      & \quad + {\int_0^T\int_\Omega u^p(t,x)\dd x \dd t}.
    \end{split}
  \end{equation*}
  where $\gamma_1, \gamma_2 \ge 0$, $\bar \rho \in \LL2(\Omega)$, and
  $p \ge 1$, then $\partial_u \psi_2 = p u^{p-1}$ and so~\eqref{eq:19}
  becomes
  \begin{equation*}
    \int_0^T \int_\Omega
    (z + p (u^*)^{p-1} ) (\bar u -u^*)
    \dd x \dd t \ge 0.
  \end{equation*}
\end{remark}

\appendix

\section{Preliminary results on the linear parabolic equation
  \texorpdfstring{$\partial_t u = \Delta u + c(t,x) \,u + f(t,x)$}{}}
\label{sec:base}

Let $\Omega \subseteq \R^n$ be a bounded domain with Lipschitz
boundary $\partial \Omega$, fix $T>0$ and set
$\Omega_T = (0,T) \times \Omega$ and
$S_T = (0,T) \times \partial \Omega$. Consider the following problem
\begin{equation}
  \label{eq:parBase}
  \left\{
    \begin{array}{ll}
      \partial_t u - \Delta u + c(t,x) \, u = f(t,x)
      & \mbox{ in } \Omega_T,  \\
      u(0,x) = g(x)
      & \mbox{ in } \Omega,\\
      \partial_\nu u(t,\xi) = 0
      & \mbox{ on } S_T,
    \end{array}
  \right.
\end{equation}
where $\nu$ is the outward normal on $\Omega$ at the boundary
$\partial \Omega$, which exists for $\mathcal H^{n-1}$-a.e.
$\xi \in \partial \Omega$.  Assume that $f \in \LL2(\Omega_T; \R)$,
$g \in \LL2(\Omega; \R)$, and $c \in \LL\infty(\Omega_T; \R)$. Fix
$c_o \ge 1$ such that $\norma{c}_{\LL\infty(\Omega_T)} \leq c_o$.  We
define the Hilbert space
\begin{equation}
  \label{eq:hilbert-space}
  \HH1\!(0,T; \HH1\!(\Omega), \HH1\!(\Omega)^*)
  \!=\! \left\{u \!\in\! \LL2\!\left(0, T; \HH1\!\left(\Omega\right)\right)\!\!:
    \dot u \!\in\! \LL2\!\left(0, T; \HH1\!\left(\Omega\right)^*\right)\right\}
\end{equation}
endowed with inner product
\begin{equation*}
  \left(u_1, u_2\right)_{\HH1(0,T; \HH1(\Omega), \HH1(\Omega)^*)}
  = \int_0^T \left(u_1(t), u_2(t)\right)_{\HH1}  \dd t
  + \int_0^T \!\!\left(\dot u_1(t), \dot u_2(t)\right)_{\HH1^*}  \dd t
\end{equation*}
and norm
\begin{equation*}
  \norm{u}_{\HH1(0,T; \HH1(\Omega), \HH1(\Omega)^*)}^2
  = \int_0^T \norm{u(t)}_{\HH1}^2 \dd t
  + \int_0^T \norm{\dot u(t)}_{\HH1^*}^2 \dd t.
\end{equation*}

Following~\cite[Chapter~10]{Salsa}, we introduce the definition of
weak solution of problem~\eqref{eq:parBase}.
\begin{definition}
  \label{def:sol-linear-equation}
  A function $u \in \HH1(0,T; \HH1(\Omega), \HH1(\Omega)^*)$ is a weak
  solution to~\eqref{eq:parBase} if $u(0) = g$ and
  \begin{displaymath}
    \langle \dot{u}(t), v\rangle_* + B\left(u(t), v; t\right)
    = \langle f(t),v \rangle_*
  \end{displaymath}
  for all $v \in \HH1(\Omega)$ and for a.e.~$t \in (0,T)$, where
  \begin{equation}
    \label{eq:bilinear}
    B(u,v;t) = \int_\Omega \left[
      \nabla u \cdot \nabla v + c(t,x) \, u \, v \right]\dd x,
  \end{equation}
  and $\langle \cdot, \cdot\rangle_*$ denotes the duality between
  $\HH1(\Omega)^*$ and $\HH1(\Omega)$.
\end{definition}
The bilinear form $B$~\eqref{eq:bilinear} is continuous, with
\begin{displaymath}
  \modulo{B(u,v;t)} \leq (1+c_o) \norma{u}_{\HH1(\Omega)} \norma{v}_{\HH1(\Omega)}.
\end{displaymath}
Moreover, $B$ is weakly coercive, since, for every $\lambda > c_o$ and
$\alpha \in \mathopen]0,1]$,
\begin{displaymath}
  B(u,u;t) + \lambda \norma{u}_{\LL2(\Omega)}^2 \geq
  \alpha \norma{u}_{\HH1(\Omega)}^2.
\end{displaymath}
Finally, for every $u,v \in \HH1(\Omega)$, the map
$t \mapsto B(u,v;t)$ is measurable by Fubini's theorem. Hence, we can
apply~\cite[Theorem~10.6]{Salsa}:
\begin{theorem}
  \label{thm:parBase}
  There exists a unique weak solution $u$ to
  problem~\eqref{eq:parBase} in the sense
  of~\Cref{def:sol-linear-equation}.  Moreover, for every
  $t \in [0, T]$,
  \begin{align}
    \label{eq:L2-estimate}
    \norma{u(t)}_{\LL2(\Omega)}^2
    & \leq
      e^{2 \, c_o \, t}
      \left\{
      \norma{g}_{\LL2(\Omega)}^2
      +
      \int_0^t \norma{f(s)}^2_{\HH1(\Omega)^*} \dd s
      \right\},
    \\
    \nonumber
    \int_0^t \norma{u(s)}_{\HH1(\Omega)}^2 \dd s
    &
      \leq
      e^{2 \, c_o \, t}
      \left\{
      \norma{g}_{\LL2(\Omega)}^2
      +
      \int_0^t \norma{f(s)}^2_{\HH1(\Omega)^*} \dd s
      \right\},
    \\
    \nonumber
    \int_0^t \norm{\dot u(s)}^2_{\HH1\left(\Omega\right)^*} \dd s
    & \le 2 \left(1 + c_o\right)^2 e^{2 c_o t} \norm{g}^2_{\LL2\left(\Omega\right)}
    \\
    \nonumber
    & \quad
      + \left(2 \left(1 + c_o\right)^2 e^{2 c_o t} + 2\right)
      \int_0^t \norm{f(s)}^2_{\HH1\left(\Omega\right)^*} \dd s.
  \end{align}
\end{theorem}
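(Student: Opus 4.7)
The plan is to reduce the statement to a direct application of \cite[Theorem~10.6]{Salsa}, after verifying that the bilinear form~\eqref{eq:bilinear} fits into its abstract framework, and then to reprove the three energy estimates by hand from the weak formulation in order to track the constants exactly as stated.

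First, I would collect the three ingredients required by the abstract parabolic theory: continuity of $B(\cdot,\cdot;t)$, weak coercivity, and measurability of $t \mapsto B(u,v;t)$. All three are already recorded in the paragraph immediately preceding the theorem, the first two following from $\norma{c}_{\LL\infty(\Omega_T)} \le c_o$ and a trivial application of the bound $\int_\Omega c\, u^2 \ge -c_o \norma{u}_{\LL2(\Omega)}^2$, and the third from Fubini. Combined with the hypotheses $f \in \LL2(\Omega_T) \subseteq \LL2(0,T; \HH1(\Omega)^*)$ and $g \in \LL2(\Omega)$, \cite[Theorem~10.6]{Salsa} directly yields a unique $u \in \HH1(0,T;\HH1(\Omega),\HH1(\Omega)^*)$ satisfying \Cref{def:sol-linear-equation}.

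Next, I would derive \eqref{eq:L2-estimate} by testing the weak equation with $v = u(t)$. The identity $\frac{\mathrm d}{\mathrm dt}\norma{u(t)}_{\LL2(\Omega)}^2 = 2\langle \dot u(t), u(t)\rangle_*$ for functions in $\HH1(0,T;\HH1(\Omega),\HH1(\Omega)^*)$ is~\cite[Theorem~7.104]{Salsa}. Substituting gives
\begin{equation*}
  \tfrac12 \tfrac{\mathrm d}{\mathrm dt} \norma{u(t)}_{\LL2(\Omega)}^2
  + B(u(t),u(t);t) = \langle f(t), u(t)\rangle_*.
\end{equation*}
Bounding the bilinear form from below by $-c_o \norma{u(t)}_{\LL2(\Omega)}^2$ (discarding the positive gradient term), and the right-hand side by $\norma{f(t)}_{\HH1(\Omega)^*} \norma{u(t)}_{\HH1(\Omega)}$ via Young's inequality, then rearranging to absorb the $\HH1$-norm of $u$ using the weak coercivity, produces a differential inequality of the form $\frac{\mathrm d}{\mathrm dt}\norma{u(t)}_{\LL2(\Omega)}^2 \le 2 c_o \norma{u(t)}_{\LL2(\Omega)}^2 + \norma{f(t)}_{\HH1(\Omega)^*}^2$. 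Gronwall's inequality then yields \eqref{eq:L2-estimate}. The second estimate is obtained by integrating the same differential inequality in time, without invoking the pointwise Gronwall step, so that the integral of $\norma{u(s)}_{\HH1(\Omega)}^2$ reappears on the left-hand side via the coercivity bound.

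Finally, I would handle the $\HH1(\Omega)^*$-norm of $\dot u$ by going back to the equation in duality form: for every $v \in \HH1(\Omega)$,
\begin{equation*}
  \langle \dot u(t), v\rangle_* = - B(u(t), v; t) + \langle f(t), v\rangle_*,
\end{equation*}
so the continuity bound $\modulo{B(u,v;t)} \le (1+c_o)\norma{u}_{\HH1(\Omega)} \norma{v}_{\HH1(\Omega)}$ gives $\norma{\dot u(t)}_{\HH1(\Omega)^*} \le (1+c_o) \norma{u(t)}_{\HH1(\Omega)} + \norma{f(t)}_{\HH1(\Omega)^*}$. Squaring, applying $(a+b)^2 \le 2a^2 + 2b^2$, integrating in $t$, and substituting the previously obtained bound on $\int_0^t \norma{u(s)}_{\HH1(\Omega)}^2 \dd s$ yields the third estimate.

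The only genuine care point is the bookkeeping of the constants to produce exactly $e^{2 c_o t}$ and the prefactors $2(1+c_o)^2$: this is what forces the particular way one uses Young's inequality and the lower bound for $B(u,u;t)$, and it is the reason I choose to discard the gradient term in the coercivity lower bound when deriving the $\LL2$ estimate, so that the exponential growth rate depends linearly on $c_o$ alone.
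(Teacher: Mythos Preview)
Your proposal is correct and matches the paper's approach: the paper does not supply a proof at all but simply invokes \cite[Theorem~10.6]{Salsa} after recording the continuity, weak coercivity, and measurability of the bilinear form $B$, exactly as you outline. Your additional sketch of the energy estimates via testing with $u(t)$, Gronwall, and the duality bound on $\dot u$ is the standard derivation behind that reference and is sound, with the caveat you already flag that the precise constants require the assumption $c_o \ge 1$ and some care in the Young splitting.
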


A first simple consequence is the continuity of the solution operator
for~\eqref{eq:parBase}.
\begin{corollary}
  \label{cor:solution-operator}
  The operator, which associates to every
  $f \in \LL2\left(\Omega_T; \R\right)$ and
  $g \in \LL2\left(\Omega; \R\right)$ the unique solution
  to~\eqref{eq:parBase}, is linear and continuous as a map
  \begin{equation*}
    \LL2\left(\Omega_T; \R\right) \times \LL2\left(\Omega; \R\right)
    \longrightarrow \HH1\left(0, T; \HH1\left(\Omega\right),
      \HH1\left(\Omega\right)^*\right).
  \end{equation*}
\end{corollary}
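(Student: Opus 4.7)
The plan is to read the continuity of the solution operator directly off the three estimates provided in \Cref{thm:parBase}. First I would dispatch linearity: given two data pairs $(f_1,g_1)$ and $(f_2,g_2)$ with corresponding solutions $u_1$ and $u_2$, the function $\lambda_1 u_1 + \lambda_2 u_2$ is, by linearity of both the equation $\partial_t u - \Delta u + c\,u = f$ and the initial/boundary conditions, a weak solution in the sense of \Cref{def:sol-linear-equation} for the data $(\lambda_1 f_1 + \lambda_2 f_2,\lambda_1 g_1 + \lambda_2 g_2)$; uniqueness in \Cref{thm:parBase} then forces it to equal the image of that pair.

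For continuity, I would recall that the norm on $\HH1(0,T;\HH1(\Omega),\HH1(\Omega)^*)$ is exactly the sum $\int_0^T \norm{u(t)}_{\HH1}^2\dd t + \int_0^T \norm{\dot u(t)}_{\HH1^*}^2\dd t$. The second estimate of \Cref{thm:parBase} bounds the first summand and the third estimate bounds the second summand, both in terms of $\norm{g}_{\LL2(\Omega)}^2$ and $\int_0^T\norm{f(s)}_{\HH1(\Omega)^*}^2\dd s$, with multiplicative constants depending only on $T$ and $c_o$.

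The one remaining step is to convert the $\HH1^*$-norm of $f(s)$ into its $\LL2$-norm, which is available since $f\in\LL2(\Omega_T)$. This is a routine use of the continuous embedding $\LL2(\Omega)\hookrightarrow \HH1(\Omega)^*$: for every $v\in\HH1(\Omega)$, $\abs{\int_\Omega f(s)\,v\,\dd x}\le \norm{f(s)}_{\LL2}\norm{v}_{\LL2}\le \norm{f(s)}_{\LL2}\norm{v}_{\HH1}$, whence $\norm{f(s)}_{\HH1^*}\le \norm{f(s)}_{\LL2}$ and therefore $\int_0^T\norm{f(s)}_{\HH1^*}^2\dd s\le \norm{f}_{\LL2(\Omega_T)}^2$. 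Combining these inequalities produces a constant $C=C(T,c_o)$ with
\begin{equation*}
\norm{u}_{\HH1(0,T;\HH1(\Omega),\HH1(\Omega)^*)}^2 \le C\bigl(\norm{g}_{\LL2(\Omega)}^2+\norm{f}_{\LL2(\Omega_T)}^2\bigr),
\end{equation*}
which is exactly the continuity of the linear map from $\LL2(\Omega_T)\times\LL2(\Omega)$ into the target Hilbert space.

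There is no real obstacle here; the statement is essentially a repackaging of \Cref{thm:parBase}, and the only thing to keep an eye on is to make sure every quantity in the target norm is covered (both the $\LL2(0,T;\HH1)$ part of $u$ and the $\LL2(0,T;\HH1^*)$ part of $\dot u$), which the second and third displayed estimates of \Cref{thm:parBase} supply together.
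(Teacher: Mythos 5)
Your proposal is correct and follows exactly the route the paper intends: the paper offers no written proof for this corollary, presenting it as an immediate consequence of the estimates in \Cref{thm:parBase}, and your argument (linearity via superposition plus uniqueness, continuity via the second and third estimates of \Cref{thm:parBase} together with the embedding $\LL2(\Omega)\hookrightarrow\HH1(\Omega)^*$) is precisely the intended justification. Nothing is missing.
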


A second consequence of the estimates provided by~\Cref{thm:parBase}
is the stability of solutions to problem~\eqref{eq:parBase} with
respect to the source function $f$.
\begin{prop}
  \label{prop:stabBase}
  Let $u_1$ and $u_2$ solve
  \begin{displaymath}
    \left\{
      \begin{array}{l}
        \partial_t u_1 \!-\! \Delta u_1 + c(t,x) \, u_1 = f_1(t,x),  \\
        u_1(0,x) = g(x),\\
        \partial_\nu u_1(t,\xi) = 0,
      \end{array}
    \right.
    \left\{
      \begin{array}{l}
        \partial_t u_2 - \Delta u_2 + c(t,x) \, u_2 = f_2(t,x),  \\
        u_2(0,x) = g(x),\\
        \partial_\nu u_2(t,\xi) = 0,
      \end{array}
    \right.
  \end{displaymath}
  with $g \in \LL2(\Omega)$, $c \in \LL\infty (\Omega_T)$ and
  $f_1, f_2 \in \LL2(\Omega_T)$. Then
  \begin{equation*}
    \norma{u_1(t) - u_2(t)}_{\LL2(\Omega)}^2
    \leq
    e^{2 \, c_o \, t}
    \int_0^t \norma{f_1(s) - f_2(s)}_{\HH1(\Omega)^*}^2 \dd{s}.
  \end{equation*}
\end{prop}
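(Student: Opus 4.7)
The plan is to reduce the statement to a direct application of the $\LL2$-estimate \eqref{eq:L2-estimate} in~\Cref{thm:parBase}, by exploiting the linearity of the parabolic problem~\eqref{eq:parBase}.

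First, I would set $u := u_1 - u_2$ and $f := f_1 - f_2$, and observe that by linearity of the equation, of the initial condition, and of the Neumann boundary condition, the function $u$ is a weak solution, in the sense of~\Cref{def:sol-linear-equation}, of the problem
\begin{displaymath}
  \left\{
    \begin{array}{ll}
      \partial_t u - \Delta u + c(t,x) \, u = f(t,x) & \mbox{ in } \Omega_T, \\
      u(0,x) = 0 & \mbox{ in } \Omega, \\
      \partial_\nu u(t,\xi) = 0 & \mbox{ on } S_T.
    \end{array}
  \right.
\end{displaymath}
More precisely, subtracting the weak formulations for $u_1$ and $u_2$, and using that the bilinear form $B(\cdot,\cdot;t)$ defined in~\eqref{eq:bilinear} is linear in its first argument, one gets $\langle \dot u(t),v\rangle_* + B(u(t),v;t) = \langle f(t),v\rangle_*$ for every $v \in \HH1(\Omega)$ and a.e.~$t \in (0,T)$, while the initial condition $u(0)=0$ follows from $u_1(0)=u_2(0)=g$. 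Since $f_1,f_2 \in \LL2(\Omega_T)$ implies $f \in \LL2(\Omega_T) \hookrightarrow \LL2(0,T;\HH1(\Omega)^*)$, the hypotheses of~\Cref{thm:parBase} are met.

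Then I would apply the estimate~\eqref{eq:L2-estimate} of~\Cref{thm:parBase} to $u$ with initial datum $g \equiv 0$ and source $f_1 - f_2$, which yields immediately
\begin{displaymath}
  \norma{u_1(t) - u_2(t)}^2_{\LL2(\Omega)}
  \leq e^{2 c_o t} \left\{ 0 + \int_0^t \norma{f_1(s)-f_2(s)}^2_{\HH1(\Omega)^*} \dd s \right\},
\end{displaymath}
which is the claimed inequality.

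There is essentially no obstacle here: the proof is a one-line consequence of linearity plus~\Cref{thm:parBase}. The only point requiring a moment of care is checking that the difference $u_1 - u_2$ genuinely belongs to the space $\HH1(0,T; \HH1(\Omega), \HH1(\Omega)^*)$ from~\eqref{eq:hilbert-space} and solves the homogeneous-initial-datum problem in the weak sense; this is immediate because the space is a vector space and the weak formulation in~\Cref{def:sol-linear-equation} is linear in $(u,f,g)$.
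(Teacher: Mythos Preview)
Your proposal is correct and follows exactly the same approach as the paper: set $u = u_1 - u_2$, observe by linearity that $u$ solves~\eqref{eq:parBase} with zero initial datum and source $f_1 - f_2$, and apply the $\LL2$-estimate~\eqref{eq:L2-estimate} of~\Cref{thm:parBase}. The paper's version is terser, but the argument is identical.
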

\begin{proof}
  Set $u:= u_1 -u_2$. Clearly, $u$ solves
  \begin{displaymath}
    \left\{
      \begin{array}{l}
        \partial_t u - \Delta u + c(t,x) \, u = f_1(t,x) - f_2(t,x),  \\
        u(0,x) = 0,\\
        \partial_\nu u(t,\xi) = 0.
      \end{array}
    \right.
  \end{displaymath}
  Applying Theorem~\ref{thm:parBase} to $u$ we obtain the thesis.
\end{proof}

The following a-priori $\LL\infty$ estimate holds.

\begin{prop}
  \label{prop:a-priori-growth}
  Let $g \in \LL\infty\left(\Omega; \R_+\right)$,
  $f \in \LL\infty\left(\Omega_T; \R_+\right)$, and
  $c \in \LL\infty\left(\Omega_T; \R\right)$.  Let $u$ be the unique
  weak solution to~\eqref{eq:parBase}.  Then, for $t \in [0, T]$,
  \begin{equation*}
    0 \le u(t, x) \le
    \left\{
      \begin{array}{l@{\quad}l}
        \left(\norm{g}_{\LL\infty} +
        \frac{\norm{f}_{\LL\infty}}{\norma{c}_{\LL\infty}}\right)
        e^{\norm{c}_{\LL\infty} t} - \frac{\norm{f}_{\LL\infty}}
        {\norma{c}_{\LL\infty}},
        &
          \norm{c}_{\LL\infty} > 0,
          \vspace{.2cm}\\
        \norm{g}_{\LL\infty} + \norm{f}_{\LL\infty} t,
        &
          \norm{c}_{\LL\infty} = 0.
      \end{array}
    \right.
  \end{equation*}
\end{prop}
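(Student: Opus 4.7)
The plan is to establish the $\LL\infty$ bounds by comparing the solution $u$ with suitable spatially-constant super- and subsolutions, then exploiting the weak formulation tested against the positive or negative part of the difference. This is essentially a weak maximum principle in the energy sense, which fits naturally with the solution class from \Cref{thm:parBase}.

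First I would handle positivity. Since $g \ge 0$ and $f \ge 0$, I expect $u \ge 0$. To prove this, I take $v = u^-(t) := \min\{u(t,\cdot),0\} \in \HH1(\Omega)$ as test function in the weak formulation of~\eqref{eq:parBase}. Using the standard chain rule for $\HH1(0,T;\HH1,\HH1{}^*)$-functions, $\langle \dot u, u^-\rangle = \tfrac12 \tfrac{\dd{}}{\dd t}\|u^-\|_{\LL2}^2$, while $\int_\Omega \nabla u \cdot \nabla u^- \dd x = \|\nabla u^-\|_{\LL2}^2$ and $\int_\Omega c\, u\, u^- \dd x = \int_\Omega c\, (u^-)^2 \dd x$. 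Since $f \ge 0$ and $u^- \le 0$, the right-hand side $\int_\Omega f\, u^- \dd x$ is nonpositive, so
\begin{equation*}
\frac12\frac{\dd{}}{\dd t}\norma{u^-(t)}_{\LL2}^2 \le \norma{c}_{\LL\infty}\norma{u^-(t)}_{\LL2}^2.
\end{equation*}
Since $u^-(0)=0$, Gronwall's inequality forces $u^- \equiv 0$, hence $u \ge 0$.

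For the upper bound, I would construct a spatially-constant supersolution $M(t)$. In the case $\norma{c}_{\LL\infty}>0$, define
\begin{equation*}
M(t) = \left(\norma{g}_{\LL\infty}+\tfrac{\norma{f}_{\LL\infty}}{\norma{c}_{\LL\infty}}\right)e^{\norma{c}_{\LL\infty}t} - \tfrac{\norma{f}_{\LL\infty}}{\norma{c}_{\LL\infty}};
\end{equation*}
this solves the ODE $M'(t) = \norma{c}_{\LL\infty} M(t) + \norma{f}_{\LL\infty}$ with $M(0)=\norma{g}_{\LL\infty}$, and in particular $M(t) \ge 0$ for $t \in [0,T]$. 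In the case $\norma{c}_{\LL\infty}=0$, take $M(t) = \norma{g}_{\LL\infty} + \norma{f}_{\LL\infty}\, t$, which satisfies $M'=\norma{f}_{\LL\infty}$. In either case, since $M(t) \ge 0$,
\begin{equation*}
M'(t) + c(t,x) M(t) \ge \norma{c}_{\LL\infty} M(t) - \norma{c}_{\LL\infty} M(t) + \norma{f}_{\LL\infty} \ge f(t,x),
\end{equation*}
so $w := u - M$ is a subsolution: $\partial_t w - \Delta w + c\, w \le 0$, with $w(0) = g - \norma{g}_{\LL\infty} \le 0$ and $\partial_\nu w = 0$ (as $M$ is independent of $x$).

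Now I run the dual argument to the positivity step: test the weak formulation of the inequation for $w$ with $w^+ := \max\{w,0\} \in \HH1(\Omega)$, obtaining
\begin{equation*}
\frac12\frac{\dd{}}{\dd t}\norma{w^+(t)}_{\LL2}^2 + \norma{\nabla w^+(t)}_{\LL2}^2 + \int_\Omega c\,(w^+)^2 \dd x \le 0,
\end{equation*}
whence $\tfrac{\dd{}}{\dd t}\norma{w^+}_{\LL2}^2 \le 2\norma{c}_{\LL\infty}\norma{w^+}_{\LL2}^2$. Since $w^+(0)=0$, Gronwall yields $w^+ \equiv 0$, i.e., $u(t,x) \le M(t)$ a.e., which is exactly the stated bound. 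The main technical care point, and the only real obstacle, is justifying the chain rule $\langle \dot u, u^\pm\rangle = \tfrac12 \tfrac{\dd{}}{\dd t}\|u^\pm\|_{\LL2}^2$ and the admissibility of $u^\pm, w^\pm$ as test functions in $\HH1(\Omega)$; both are standard facts for functions in $\HH1(0,T;\HH1(\Omega),\HH1(\Omega)^*)$ and can be invoked from the same reference (Salsa, Chapter~7 / Chapter~10) already used in \Cref{thm:parBase}.
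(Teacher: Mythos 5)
Your argument is correct, and it reaches the bound through the same comparison function the paper uses: in both cases the key object is the spatially constant supersolution solving $M'=\norma{c}_{\LL\infty}M+\norma{f}_{\LL\infty}$, $M(0)=\norma{g}_{\LL\infty}$ (respectively $M'=\norma{f}_{\LL\infty}$ when $c\equiv 0$), and the inequality $M'+c\,M\ge f$ relies in the same way on $M\ge 0$. Where you diverge is in how the comparison is closed. The paper defines the parabolic operator $\mathcal P$, checks $\mathcal P(w-u)\ge 0$ together with the sign conditions on the initial and Neumann data, and then invokes the weak maximum principle as a black box (citing \cite[Theorem~10.18 and Remark~10.19]{Salsa}); you instead re-prove exactly the comparison statement you need by Stampacchia's truncation method, testing with $u^-$ and $(u-M)^+$ and applying Gronwall. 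Your route is more self-contained and makes visible why the Neumann boundary condition causes no trouble (the boundary term simply never appears in the weak formulation), at the cost of having to justify the identity $\langle\dot u,u^\pm\rangle=\tfrac12\tfrac{\dd{}}{\dd t}\norma{u^\pm}_{\LL2}^2$ for functions in $\HH1(0,T;\HH1(\Omega),\HH1(\Omega)^*)$ --- a standard but nontrivial lemma, which you correctly flag as the one technical point to be cited rather than proved. The paper's version is shorter but leans entirely on the cited maximum principle. Both are complete proofs of the stated estimate.
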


\begin{proof}
  Define the linear operator
  \begin{equation*}
    v \mapsto \mathcal Pv := \partial_t v - \Delta v - c v.
  \end{equation*}
  Clearly $\mathcal Pu = f \ge 0$ and $u(0) = g \ge 0$; hence
  $u(t, x) \ge 0$ by the weak maximum principle; see for
  example~\cite[Theorem~10.18 and Remark~10.19]{Salsa}.

  Define
  \begin{equation*}
    w(t, x) = \begin{cases}
      \left(\norm{g}_{\LL\infty} +
        \dfrac{\norm{f}_{\LL\infty}}{\norma{c}_{\LL\infty}}\right)
      e^{\norm{c}_{\LL\infty} t} - \dfrac{\norm{f}_{\LL\infty}}{\norma{c}_{\LL\infty}}
      & \mbox{ if } c \neq 0,
      \\[8pt]
      \norm{g}_{\LL\infty} + \norm{f}_{\LL\infty} \, t
      & \mbox{ if } c \equiv 0.
    \end{cases}
  \end{equation*}
  In the case $c\neq 0$, we have
  \begin{align*}
    \mathcal P\left(w - u\right)
    = \
    &  \mathcal Pw - \mathcal Pu
    \\
    =  \
    &  \norm{c}_{\LL\infty} \left(\norm{g}_{\LL\infty} +
      \frac{\norm{f}_{\LL\infty}}{\norma{c}_{\LL\infty}}\right)
      e^{\norm{c}_{\LL\infty} t}
    \\
    & \, - c(t,x) \left(\norm{g}_{\LL\infty} +
      \frac{\norm{f}_{\LL\infty}}{\norma{c}_{\LL\infty}}\right)
      e^{\norm{c}_{\LL\infty} t}
      + c(t,x) \frac{\norm{f}_{\LL\infty}}{\norma{c}_{\LL\infty}} - f(t,x)
    \\
    = &
        \left(\norma{c}_{\LL\infty} - c(t,x) \right)
        \norm{g}_{\LL\infty} e^{\norm{c}_{\LL\infty} t}
        + \norm{f}_{\LL\infty}e^{\norm{c}_{\LL\infty} t}
    \\
    & - \frac{c(t,x)}{\norma{c}_{\LL\infty}} \norm{f}_{\LL\infty}
      \left(e^{\norm{c}_{\LL\infty} t} -1\right) - f(t,x)
    \\
    \ge\
    &\norma{f}_{\LL\infty} \left(e^{\norm{c}_{\LL\infty} t} -1\right)
      \left( 1 - \frac{c(t,x)}{\norma{c}_{\LL\infty}}\right)
    \\
    \ge \
    & 0.
  \end{align*}
  On the other hand, in the case $c \equiv 0$ we have
  \begin{displaymath}
    \mathcal P\left(w - u\right)
    =  \norma{f}_{\LL\infty} - f(t,x) \ge 0.
  \end{displaymath}
  In both cases, $\partial_\nu\left(w - u\right) = 0$ in $S_T$ and
  \begin{equation*}
    \left(w(0, x) - u(0, x)\right) = \norm{g}_{\LL\infty} - g(x)
    \ge 0.
  \end{equation*}
  Hence, the weak maximum principle implies that $w \ge u$ in
  $\Omega_T$, completing the proof.
\end{proof}

We briefly recall a regularity result, see~\cite[Remark~10.17]{Salsa}:
the more regular the initial data, the more regular the solution.

\begin{prop}
  \label{prop:regularity}
  Let $u$ be the unique weak solution to problem~\eqref{eq:parBase} in
  the sense of~\Cref{def:sol-linear-equation}.
  If $g \in \HH1(\Omega)$, $f \in \LL2(0,T; \LL2(\Omega))$, and
  $c \in \LL\infty(\Omega_T)$, then
  $u\in \LL\infty(0,T; \HH1(\Omega))$ and
  $\dot u \in \LL2(0,T; \LL2(\Omega))$.\\
  If, in addition, $\Omega$ is a $\CC2$--domain,
  then $u \in \LL2(0,T; \HH2(\Omega))$.\\
\end{prop}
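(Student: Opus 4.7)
My plan is to prove the two regularity claims by energy estimates in which the natural test function is $\dot u(t)$, followed by elliptic regularity applied time-slice by time-slice.

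\textbf{Step 1 (improved time regularity by testing with $\dot u$).} Since the weak solution exists by \Cref{thm:parBase}, the strategy is to upgrade it via a Galerkin scheme: let $\{e_k\}_{k\ge1}\subset\HH1(\Omega)$ be a basis (for instance the Neumann eigenfunctions of $-\Delta$), set $u_m(t)=\sum_{k=1}^m \alpha_k^m(t)\,e_k$, and choose the Galerkin ODE so that $u_m(0)\to g$ in $\HH1(\Omega)$. Testing the $m$-th approximate equation against $\dot u_m(t)$ gives the key identity
\begin{equation*}
\norma{\dot u_m(t)}_{\LL2(\Omega)}^2+\tfrac{1}{2}\frac{\dd{}}{\dd t}\norma{\nabla u_m(t)}_{\LL2(\Omega)}^2=\int_\Omega\bigl(f(t,x)-c(t,x)u_m(t,x)\bigr)\dot u_m(t,x)\dd x,
\end{equation*}
and Young's inequality on the right-hand side absorbs $\frac12\norma{\dot u_m(t)}_{\LL2}^2$ on the left, leaving
\begin{equation*}
\tfrac12\norma{\dot u_m(t)}_{\LL2}^2+\tfrac{1}{2}\frac{\dd{}}{\dd t}\norma{\nabla u_m(t)}_{\LL2}^2\le \norma{f(t)}_{\LL2}^2+c_o^2\norma{u_m(t)}_{\LL2}^2.
\end{equation*}
Integrating in time and using the $\LL2$-bound on $u_m$ already provided by~\Cref{thm:parBase}, I obtain uniform estimates
\begin{equation*}
\sup_{t\in[0,T]}\norma{\nabla u_m(t)}_{\LL2(\Omega)}^2+\int_0^T\norma{\dot u_m(t)}_{\LL2(\Omega)}^2\dd t\le C\bigl(\norma{g}_{\HH1}^2+\norma{f}_{\LL2(\Omega_T)}^2\bigr),
\end{equation*}
with $C$ depending only on $c_o$ and $T$. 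Standard weak compactness then yields a subsequence with $u_m\wc u^\star$ in $\LL\infty(0,T;\HH1(\Omega))$ weak-$*$ and $\dot u_m\wc \dot u^\star$ in $\LL2(0,T;\LL2(\Omega))$, and the limit $u^\star$ solves the weak formulation of~\eqref{eq:parBase}. By uniqueness (\Cref{thm:parBase}), $u^\star=u$, so $u\in\LL\infty(0,T;\HH1(\Omega))$ and $\dot u\in\LL2(0,T;\LL2(\Omega))$.

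\textbf{Step 2 ($\HH2$-regularity via elliptic regularity in space).} Assuming now that $\Omega$ is of class $\CC2$, I rewrite the equation at almost every time $t\in(0,T)$ as the stationary Neumann problem
\begin{equation*}
-\Delta u(t,\cdot)=f(t,\cdot)-c(t,\cdot)\,u(t,\cdot)-\dot u(t,\cdot)\quad\text{in }\Omega,\qquad \partial_\nu u(t,\cdot)=0\quad\text{on }\partial\Omega.
\end{equation*}
By Step~1, the right-hand side belongs to $\LL2(\Omega)$ for a.e. $t$, with
\begin{equation*}
\norma{f(t)-c(t)u(t)-\dot u(t)}_{\LL2(\Omega)}\le \norma{f(t)}_{\LL2}+c_o\norma{u(t)}_{\LL2}+\norma{\dot u(t)}_{\LL2}.
\end{equation*}
Classical elliptic regularity for the Neumann Laplacian on $\CC2$ domains (as in~\cite[Theorem~7.87]{Salsa} or the references quoted in the appendix) gives $u(t,\cdot)\in\HH2(\Omega)$ with a bound $\norma{u(t)}_{\HH2(\Omega)}\le K\bigl(\norma{f(t)}_{\LL2}+\norma{u(t)}_{\LL2}+\norma{\dot u(t)}_{\LL2}\bigr)$. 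Squaring and integrating in $t$, and using the bounds from Step~1 together with~\Cref{thm:parBase} for $\norma{u(\cdot)}_{\LL2(\Omega)}$, yields $u\in\LL2(0,T;\HH2(\Omega))$.

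\textbf{Main obstacle.} The only delicate point is making Step~1 rigorous: the weak solution provided by \Cref{thm:parBase} lives a priori only in $\HH1(0,T;\HH1(\Omega),\HH1(\Omega)^*)$, and $\dot u$ is not yet known to be in $\LL2(\Omega_T)$, so one cannot simply plug $\dot u$ as a test function in the weak formulation. The Galerkin route is used precisely to perform this computation at the finite-dimensional level (where $\dot u_m$ is a legitimate test function because $u_m(t)\in\mathrm{span}\{e_1,\dots,e_m\}$), and only afterwards does one pass to the limit. Alternatively one may invoke the result quoted as~\cite[Remark~10.17]{Salsa} directly; the proof sketch above explains how it is obtained.
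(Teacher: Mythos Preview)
Your proposal is correct and follows essentially the same approach as the paper: a Faedo--Galerkin approximation with Neumann eigenfunctions, testing against $\dot u_m$, absorbing via Young's inequality, integrating in time with the $\LL2$-bound from \Cref{thm:parBase}, and then invoking elliptic regularity on $\CC2$ domains for the $\HH2$ conclusion. The paper's version differs only in cosmetic details (the precise Young constants, and citing~\cite[Theorem~8.28]{Salsa} rather than Theorem~7.87).
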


\begin{proof}
  The proof is based on the Faedo-Galerkin approximation for
  problem~\eqref{eq:parBase}; see~\cite[Theorem~10.14]{Salsa} for a
  similar case.

  Take a sequence $w_s$ of eigenvalues of the Laplace operator in
  $\Omega$ with $0$ Neumann boundary condition. We select the
  eigenvalues such that the closure of their span coincides with the
  space $\HH1\left(\Omega\right)$, they are orthogonal in
  $\HH1\left(\Omega\right)$ and orthonormal in
  $\LL2\left(\Omega\right)$.  For every $m > 1$, define a
  Faedo-Galerkin approximation $u_m$ of $u$ as
  \begin{equation*}
    u_m(t) = \sum_{j = 1}^{m} c_{jm}(t) w_j,
  \end{equation*}
  where the coefficients $c_{jm}(t)$ belong to
  $\HH1\left(0,T; \R\right)$ for every
  $j \in \left\{1, \ldots, m\right\}$, so that, for a.e.
  $t \in [0, T]$, for all $m$, and for all $v \in \HH1(\Omega)$,
  \begin{equation}
    \label{eq:reg:weak-sol}
    \langle \dot{u}_m(t), v\rangle_* + B\left(u_m(t), v; t\right)
    = \langle f(t),v \rangle_*,
  \end{equation}
  where the bilinear term $B$ is defined in~\eqref{eq:bilinear}.  Note
  that for a.e. $t \in[0,T]$, $u_m(t)$ converges to $u(t)$ in
  $\HH1\left(\Omega\right)$ as $m \to + \infty$ and
  $\dot u_m(t) \in \HH1\left(\Omega\right)$
  for every $m \ge 1$.  Thus, substituting $v = \dot u_m(t)$
  in~\eqref{eq:reg:weak-sol} and using~\eqref{eq:bilinear} and the
  hypothesis that
  $f \in \LL2\left(0,T; \LL2\left(\Omega\right)\right)$, we get that
  \begin{equation}
    \label{eq:reg-estimate-1}
    \begin{split}
      & \quad \norm{\dot u_m(t)}_{\LL2\left(\Omega\right)}^2 +
      \underbrace{\int_\Omega \nabla u_m(t) \cdot \nabla \dot u_m(t)
        \dd x}_{I_1}
      \\
      & = \underbrace{\int_\Omega f(t) \dot u_m(t) \dd x}_{I_2} -
      \underbrace{\int_\Omega c(t,x) \, u_m(t) \, \dot u_m(t) \dd
        x}_{I_3}.
    \end{split}
  \end{equation}
  Note that
  \begin{equation}
    \label{eq:regularity_I_1}
    I_1 = \frac{1}{2} \frac{\phantom{i}\dd{\phantom{i}}}{\dd t}
    \norm{\nabla u_m(t)}_{\LL2\left(\Omega\right)}^2.
  \end{equation}
  Moreover, for a.e.~$t \in (0,T)$,
  \begin{equation}
    \label{eq:regularity_I_2}
    \begin{split}
      I_2 & \le \int_\Omega \abs{f(t)} \abs{\dot u_m(t)} \dd x \le
      \norm{f(t)}_{\LL2\left(\Omega\right)} \norm{\dot
        u_m(t)}_{\LL2\left(\Omega\right)}
      \\
      & \le 2 \norm{f(t)}_{\LL2\left(\Omega\right)}^2 + \frac{1}{8}
      \norm{\dot u_m(t)}_{\LL2\left(\Omega\right)}^2.
    \end{split}
  \end{equation}
  Finally, for a.e.~$t \in (0,T)$,
  \begin{equation}
    \label{eq:regularity_I_3}
    \begin{split}
      \abs{-I_3} & \le \int_\Omega \abs{c(t, x)} \abs{u_m(t)}
      \abs{\dot u_m(t)} \dd x
      \\
      & \le \norm{c}_{\LL\infty\left(\Omega_T\right)}
      \norm{u_m(t)}_{\LL2\left(\Omega\right)} \norm{\dot
        u_m(t)}_{\LL2\left(\Omega\right)}
      \\
      & \le 2 \norm{c}_{\LL\infty\left(\Omega_T\right)}^2
      \norm{u_m(t)}_{\LL2\left(\Omega\right)}^2 + \frac{1}{8}
      \norm{\dot u_m(t)}_{\LL2\left(\Omega\right)}^2,
    \end{split}
  \end{equation}
  provided $\norm{c}_{\LL\infty\left(\Omega_T\right)} > 0$.
  Inserting~\eqref{eq:regularity_I_1}, \eqref{eq:regularity_I_2},
  and~\eqref{eq:regularity_I_3} into~\eqref{eq:reg-estimate-1} we
  deduce that, for a.e. $t \in [0, T]$,
  \begin{align*}
    & \frac{3}{4} \norm{\dot u_m(t)}_{\LL2\left(\Omega\right)}^2
      + \frac{1}{2} \frac{\phantom{i}\dd{\phantom{i}}}{\dd t}
      \norm{\nabla u_m(t)}_{\LL2\left(\Omega\right)}^2
    \\
    \le
    & 2 \norm{f(t)}_{\LL2\left(\Omega\right)}^2
      + 2 \norm{c}_{\LL\infty\left(\Omega_T\right)}^2
      \norm{u_m(t)}_{\LL2\left(\Omega\right)}^2
  \end{align*}
  and so, integrating in time, since
  $\norma{\nabla u_m(0)}^2_{\LL2\left(\Omega\right)} \le
  \norma{g}^2_{\LL2\left(\Omega\right)}$,
  \begin{equation*}
    \begin{split}
      & \quad \frac{3}{4} \int_0^t \norm{\dot
        u_m(s)}_{\LL2\left(\Omega\right)}^2 \dd s + \frac{1}{2}
      \norm{\nabla u_m(t)}_{\LL2\left(\Omega\right)}^2
      \\
      & \le \frac{1}{2} \norm{\nabla
        u_m(0)}_{\LL2\left(\Omega\right)}^2 \!+\! 2 \int_0^t\!
      \norm{f(s)}_{\LL2\left(\Omega\right)}^2 \dd s + 2
      \norm{c}_{\LL\infty\left(\Omega_T\right)}^2 \int_0^t
      \norm{u_m(s)}_{\LL2\left(\Omega\right)}^2 \dd s
      \\
      & \le \frac{1}{2} \norm{\nabla g}_{\LL2\left(\Omega\right)}^2 +
      2 \int_0^t \norm{f(s)}_{\LL2\left(\Omega\right)}^2 \dd s + 2
      \norm{c}_{\LL\infty\left(\Omega_T\right)}^2 \int_0^t
      \norm{u_m(s)}_{\LL2\left(\Omega\right)}^2 \dd s.
    \end{split}
  \end{equation*}
  Passing to the limit as $m \to + \infty$ and
  using~\eqref{eq:L2-estimate}, we have
  \begin{equation*}
    \begin{split}
      & \quad \frac{3}{4} \int_0^t \norm{\dot
        u(s)}_{\LL2\left(\Omega\right)}^2 \dd s + \frac{1}{2}
      \norm{\nabla u(t)}_{\LL2\left(\Omega\right)}^2
      \\
      & \le \frac{1}{2} \norm{\nabla g}_{\LL2\left(\Omega\right)}^2 +
      2 \int_0^t \norm{f(s)}_{\LL2\left(\Omega\right)}^2 \dd s + 2
      \norm{c}_{\LL\infty\left(\Omega_T\right)}^2 \int_0^t
      \norm{u(s)}_{\LL2\left(\Omega\right)}^2 \dd s
      \\
      & \le \frac{1}{2} \norm{\nabla g}_{\LL2\left(\Omega\right)}^2 +
      2 \int_0^t \norm{f(s)}_{\LL2\left(\Omega\right)}^2 \dd s
      \\
      & \quad + 2 \norm{c}_{\LL\infty\left(\Omega_T\right)}^2 e^{2
        \norm{c}_{\LL\infty\left(\Omega_T\right)} t}
      \norm{g}_{\LL2\left(\Omega\right)}^2 t
      \\
      & \quad + 2 \norm{c}_{\LL\infty\left(\Omega_T\right)}^2 e^{2
        \norm{c}_{\LL\infty\left(\Omega_T\right)} t} t \int_0^t
      \norm{f(s)}_{\LL2\left(\Omega\right)}^2 \dd s.
    \end{split}
  \end{equation*}
  Since, by~\eqref{eq:L2-estimate},
  $u\in \LL\infty(0,T; \LL2(\Omega))$, the previous inequality proves
  that $u\in \LL\infty(0,T; \HH1(\Omega))$ and
  $\dot u \in \LL2(0,T; \LL2(\Omega))$.  If the boundary
  $\partial \Omega$ of $\Omega$ is of class $\CC2$, then
  $u(t) \in \HH2\left(\Omega\right)$ for a.e. $t \in [0, T]$
  (see~\cite[Theorem~8.28]{Salsa}), proving that
  $u \in \LL2(0,T; \HH2(\Omega))$.
\end{proof}



\section*{Acknowledgments}
The authors were partially supported by the 2022 GNAMPA project
\textsl{Evolution equations: well posedness, control and applications.}.

%
%

{ \bibliographystyle{abbrv}

  \bibliography{glioma.bib} }

\end{document}